%% file: main.tex
\documentclass{article}
\usepackage{amsmath,amssymb,amsthm}
\usepackage{tikz}
\usepackage{enumerate}
\usepackage{url}

\newtheorem{theorem}{Theorem}

\newtheorem{cor}[theorem]{Corollary}
\newtheorem{lemma}[theorem]{Lemma}
\newtheorem{prop}[theorem]{Proposition}

\theoremstyle{definition}
\newtheorem{definition}[theorem]{Definition}

\newtheorem{remark}[theorem]{Remark}

\numberwithin{theorem}{section}
\numberwithin{equation}{section}
\numberwithin{figure}{section}

\newtheorem*{theorem*}{Theorem}
\newtheorem*{conj*}{Conjecture}

\renewcommand{\phi}{\varphi}
\newcommand{\allset}[1]{\mathcal{J}_{#1}}
\newcommand{\maxset}[1]{\mathcal{M}_{#1}}
\newcommand{\maxweightset}[1]{\mathcal{M}^*_{#1}}
\newcommand{\maxweight}[1]{w_{#1}^*}

\newcommand{\permset}[1]{\mathcal{S}_{#1}}

\newcommand{\lehmerset}[1]{\mathcal{L}_{#1}}
\newcommand{\blockset}[1]{\mathcal{B}_{#1}}
\newcommand{\mwblockset}[1]{\mathcal{B}^*_{#1}}
\newcommand{\extblockset}[1]{\overline{\mathcal{B}}^*_{#1}}

\newcommand{\row}[1]{\rho_{#1}}
\newcommand{\ind}[1]{\lambda_{#1}}

\DeclareMathOperator{\inv}{inv}
\DeclareMathOperator{\maxinv}{maxinv}
\DeclareMathOperator{\mininv}{mininv}

\def\T{T}

\def\pattern{\underline{32}1}

\newcounter{x}
\newcounter{y}

\newcommand\emptycol[1]{
   \draw[thick] (#1, 0) -- (#1, #1) -- (#1-1, #1) -- (#1-1, 0) -- cycle;   
   \foreach \c in {1,...,#1} {
      \draw[thick] (#1, \c) -- (#1-1, \c);
  }      
}

\newcommand\fillcol[3]{
    \fill[#3] (#1, 0) -- (#1, #2) -- (#1-1, #2) -- (#1-1, 0) -- cycle;
    
}

\newcommand\tri[3][(0,0)]{
 \setcounter{y}{0}
  \foreach \a in {#2} {
      \addtocounter{y}{1}
    }

\begin{scope}[shift={#1}]
\begin{scope}[scale=0.33, shift={(-\value{y}/2-1/2,0)}]    
 \setcounter{x}{0}
  \foreach \a in {#2} {
      \addtocounter{x}{1}
      \fillcol{\value{x}}{\a}{#3}
      \emptycol{\value{x}}
    }

  \end{scope}
\end{scope}
}

\allowdisplaybreaks

\begin{document}
\title{$\pattern$-Avoiding Permutations with Maximum Inversion Number}
\author{Andrew Beveridge\footnote{Department of Mathematics, Statistics and Computer Science, Macalester College, St Paul, MN, USA}, Kristin Heysse$^*$ and Paige Robertson\footnote{Department of Mathematics, University of Colorado, Boulder, CO, USA}}

\date{}

\maketitle

\begin{abstract}
A permutation $\pi \in \permset{n}$ is $\pattern$-avoiding when there do not exist $1 \leq i < i+1 < j \leq n$ such that $\pi_i > \pi_{i+1} > \pi_j$. We determine the maximum inversion number for $\pattern$-avoiding permutations and count the number of permutations that achieve this maximum. We then provide a direct construction that enumerates these permutations.
\end{abstract}

\noindent
{\bf Keywords:} permutation,  pattern avoidance, vincular pattern, Lehmer code, inversion number

\medskip

\noindent
{\bf 2020 Mathematics Subject Classification:} 05A05

\input{intro-alt}

\input{jump-float}

\input{max-weight}

%%%%%%

\input{results}

%%%%%

\input{conclusion}

%%% BIBLIOGRAPHY %%% 

%%\nocite{*}

\bibliographystyle{plain}

%\bibliography{bibliography}

\bibliography{biblio}

\end{document}

%% file: intro-alt.tex
\section{Introduction}
\label{sec:intro}

The analysis of pattern containment and avoidance within permutations is a cornerstone of enumerative combinatorics. Given permutations $\sigma = (\sigma_1, \sigma_2, \ldots, \sigma_k) \in \permset{k}$ and $\pi = (\pi_1, \pi_2, \cdots, \pi_n) \in \permset{n}$, we say that $\pi$ \emph{contains} $\sigma$ when there exist indices $1 \leq i_1 < i_2 < \cdots < i_k \leq n$ such that the entries of subsequence $(\pi_{i_1}, \pi_{i_2}, \ldots, \pi_{i_k})$ are in the same relative order as the entries of $\sigma$; otherwise, we say that $\pi$ \emph{avoids} $\sigma$. 
Babson and Steingr\'imsson \cite{Babson2000} initiated the study of \emph{vincular patterns} (also called \emph{generalized patterns}), where we further require that some $\pi_{i_j}$ and $\pi_{i_{j+1}}$ are adjacent in $\pi$.
See Kitaev \cite{kitaev} for a broad introduction to patterns in permutations, and see Steingr\'imsson \cite{steingrimsson} for a survey on vincular patterns. 

\begin{definition}
A \emph{vincular pattern} $\sigma$ is a permutation in $\permset{k}$, some of whose consecutive letters are underlined. If $\pi \in \permset{n}$ contains vincular pattern $\sigma$, and $\sigma$ contains $\underline{\sigma_i \sigma_{i+1} \cdots \sigma_j}$, then the letters corresponding to $\sigma_i, \sigma_{i+1}, \ldots, \sigma_j$ in $\pi$ must be adjacent. The collection of $n$-permutations that avoid vincular pattern $\sigma$ is denoted $\permset{n}(\sigma)$.
\end{definition}

For example, $14352$ has a $\pattern$ pattern, while $51423$ does not (though it does have two 321 patterns). 

Permutation families that avoid vincular patterns have received widespread attention. Claesson \cite{claesson} gave enumerative formulas for families of permutations avoiding length three vincular patterns with one pair of adjacent letters.
%: 8 of these 12 families are counted by the Bell numbers, and the remaining 4 are counted by the Catalan numbers. 
Biers-Ariel \cite{biers-ariel} enumerated permutations avoiding certain length four vincular patterns. 
Mansour and Shattuck \cite{mansour2025-flatten} explored pattern avoidance for flattened derangements.
Various researchers have studied vincular pattern avoidance on cyclic permutations \cite{domagalski_2022,li2022,mansour2022}.
More generally, Dimitrov \cite{dimitrov2023} explored 
pattern avoidance  with designated gap sizes
between pairs of consecutive letters, and vincular patterns for other combinatorial structures have also been studied \cite{lin_vincular_2020,mansour2025}. 

%Herein, we focus our attention on $\permset{n}(\pattern)$, the family of $\pattern$-avoiding permutations.

We prove an extremal result about the inversion number of $\pattern$-avoiding permutations.

% \begin{definition}
%     A permutation $\pi \in \permset{n}$ has a $\pattern$ pattern when there exist $1 \leq i < i+1 < j \leq n$ such that $\pi_i > \pi_{i+1} > \pi_j$. The set of $\pattern$-avoiding permutations of $[n]$ is denoted $\permset{n}(\pattern)$.
% \end{definition}

\begin{definition}
The \emph{inversion number} of permutation $\pi \in \permset{n}$ is  
$$
\inv(\pi) = \{ (i,j) : i < j \mbox{ and } \pi_i > \pi_j \}.
$$   
\end{definition}

The inversion number $\inv(\pi)$ is also the number of interchanges of consecutive elements needed to convert permutation $\pi$ into the identity permutation $123 \cdots n$.
Inversion numbers for (regular) pattern avoiding permutations have received ample attention.
Barcucci et al.~\cite{barcucci} explored these permutations using multivariate generating functions. Frankl\'in \cite{franklin} developed enumerative formulas by inversion number alone. Chen et al.~\cite{chen-mei-wang} counted 321-avoiding permutations by length and inversion number. Cheng et al.~\cite{cheng_inversion_2013} studied the inversion polynomial for 321-avoiding permutations.

% More recently, Claesson et al.~\cite{Claesson2023} 

Given vincular pattern $\sigma$, 
let $\maxinv \permset{n}(\sigma)$ 
%$= \max_{\pi \in \permset{n}(\scriptsize{\pattern)}} \inv(\pi)$ 
denote the maximum inversion number for $\sigma$-avoiding permutations of $[n]$. 
Let $\mathcal{Z}_n(\sigma) \subset \permset{n}(\sigma)$ denote the subset of permutations achieving $\maxinv \permset{n}(\sigma)$. Our main theorem fully characterizes $\mathcal{Z}_n(\pattern)$.

\begin{theorem}
    \label{thrm:correct-perm}
    For $n \geq 1$, the number of $\pattern$-avoiding permutations of $[n+1]$ with maximum inversion number is  
    \[
    |\mathcal{Z}_{n+1}(\pattern)| = 
    \binom{\row{n}}{\ind{n}}+\binom{\row{n}+1}{\ind{n}-1}
    \]
    where 
$$
\row{n} = \left\lfloor \frac{-3 + \sqrt{8n+17}}{2} \right\rfloor
\qquad
\mbox{and}
\quad
\ind{n} = n + 2 - \T_{\row{n}+1}
$$    
and $\T_i = {i+1 \choose 2} = \sum_{j=1}^i j$ is the $i$th triangular number. 
    These permutations have inversion number
    \[
    \maxinv \permset{n+1}(\pattern)=\T_{n-\row{n}} + \sum_{i=1}^{\row{n}}  \T_i =  {n-\row{n} +1 \choose 2} + { \rho_n + 2 \choose 3}.
    \]
\end{theorem}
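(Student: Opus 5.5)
The plan is to translate everything into Lehmer codes. Write $\pi\in\permset{n+1}$ as its Lehmer code $L(\pi)=(\ell_1,\dots,\ell_{n+1})$ with $\ell_i=|\{j>i:\pi_j<\pi_i\}|$. This is a bijection onto sequences with $0\le \ell_i\le n+1-i$, and $\inv(\pi)=\sum_i \ell_i$.

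\textbf{Step 1 (translating avoidance).} The standard facts are that $\pi_i>\pi_{i+1}$ iff $\ell_i>\ell_{i+1}$, and that, given such a descent, a $\pattern$ occurrence starting at $i$ exists iff some $j>i+1$ has $\pi_j<\pi_{i+1}$, i.e.\ iff $\ell_{i+1}>0$. Hence $\pi$ avoids $\pattern$ iff
\[
\ell_i>\ell_{i+1}\ \Longrightarrow\ \ell_{i+1}=0 .
\]
So a $\pattern$-avoiding code is a concatenation of weakly increasing blocks, every block after the first beginning with a $0$, subject to the caps $\ell_i\le n+1-i$. (Sanity check for $n+1=3$: the maximizers are $(1,1,0)$ and $(2,0,0)$, giving count $2$ and maximum $2$, matching the formula with $\row{2}=1$, $\ind{2}=1$.) The problem becomes a purely combinatorial optimization: maximize $\sum\ell_i$ over such block sequences and count the maximizers.

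\textbf{Step 2 (the optimization).} Within a block ending at position $e$, monotonicity and the decreasing caps force every entry to be at most $n+1-e$. So a block of length $k$ ending at $e$ contributes at most $k(n+1-e)$ if it is the first block, and $(k-1)(n+1-e)$ otherwise. I would encode a code by its block endpoints and show by exchange arguments what an optimal configuration looks like:
\begin{itemize}
\item An exchange argument should show the capped entries of the later blocks are forced, so that the later blocks realize the staircase $\sum_{i=1}^{\row{n}}\T_i$.
\item The first, long block should contribute $\T_{n-\row{n}}$.
\item Which block lengths occur will be governed by comparing the marginal gain of lengthening a block with the loss from the $0$ it forces at the start of the next block.
\end{itemize}
Balancing these should give the threshold $\row{n}$ as the largest $r$ with $\T_{r+1}\le n+1$ (equivalently $r\le(-3+\sqrt{8n+17})/2$). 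Summing then yields $\T_{n-\row{n}}+\binom{\row{n}+2}{3}$, and a matching construction gives the lower bound.

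\textbf{Step 3 (counting).} The quantity $\ind{n}=n+2-\T_{\row{n}+1}$ measures the slack left after the ``perfect'' staircase of blocks. Maximizers should correspond to distributing this slack, for example choosing which of the blocks absorb a unit of extra length or which entries sit one below their cap. These choices split into two families: in one the first block has its extremal form, giving $\binom{\row{n}}{\ind{n}}$ choices; in the other it does not, giving $\binom{\row{n}+1}{\ind{n}-1}$.

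The main obstacle is Step 2 together with the exactness needed in Step 3. It is not enough to bound the sum; one needs a structural theorem stating that every maximizer has exactly the described shape, with no other near-ties. I would first try a local improvement lemma: any block configuration violating the staircase profile can be modified (merge two blocks, split one, or shift an endpoint) to strictly increase the weight. Only then would I count the surviving ties carefully, checking small $n$ by hand against the formula.
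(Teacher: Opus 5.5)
Your Step~1 is correct. It is exactly the paper's Lemma~\ref{lem:jump-float-bijection}; the paper just reverses the code and drops the forced final $0$. Reducing to maximal codes, where every run is constant at the cap of its last entry, is the easy Lemma~\ref{lemma:max-skip-sequence}.

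The gap is in Steps~2 and~3, which carry the entire content of the theorem. They are only announced (``should show'', ``should contribute''), and the concrete structure you predict there is wrong, so the sketch cannot be completed as described.

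\begin{enumerate}
\item The formula $T_{n-\rho_n}+\sum_{i\le\rho_n}T_i$ is not a per-block split. For $n=8$ the unique maximizer has code $(6,6,6,0,3,3,0,1,0)$. Its first run contributes $18$, not $T_5=15$, and its later runs contribute $7$, not $\sum_{i\le 3}T_i=10$.
\item Your threshold is off by one. $\rho_n$ is the largest $r$ with $T_{r+1}\le n+2$, and only that is equivalent to $r\le(-3+\sqrt{8n+17})/2$. For $n=4$ your condition gives $1$, while $\rho_4=2$.
\item The two binomial families are not separated by the first run. For $n=5$, the maximizers $(4,4,0,1,1,0)$ and $(4,4,0,2,0,0)$ share the first run $(4,4)$. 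Yet the first is counted by $\binom{\rho_n}{\lambda_n}$ and the second by $\binom{\rho_n+1}{\lambda_n-1}$. In the paper, the families are distinguished by whether the extended block sequence (a leading gap counted as a block of width $0$) has length $\rho_n$ or $\rho_n+1$.
\item A strict-improvement lemma cannot by itself isolate the maximizers. Moving one position between two equal adjacent blocks is weight-neutral, so ties are intrinsic rather than a residue to clean up at the end.
\end{enumerate}

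What is missing is an exact exchange identity, the paper's Lemma~\ref{lem:block-delta}. Write a maximal sequence by its block widths $b_1,\dots,b_k$ in the paper's reversed order, so your first run is $b_k$. Transferring $\delta$ positions from block $i+1$ to block $i$ changes the weight by exactly $\delta(b_{i+1}-b_i-1-\delta)$. Its consequences are as follows.
\begin{enumerate}
\item Taking $\delta=\pm1$ gives $0\le b_{i+1}-b_i\le 2$, and a weight-neutral move whenever $b_i=b_{i+1}$.
\item Applying the first fact to the maximizers produced by neutral moves rules out three equal consecutive widths and forces steps of $0$ and $2$ to alternate.
\item Hence $b_i\in\{i-1,i,i+1\}$, the number of blocks is $\rho_n$ or $\rho_n+1$, and $b_k\in\{\rho_n,\rho_n+1\}$.
\end{enumerate}

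The count then comes from a recursion rather than a direct slack-distribution argument. Deleting the last block and its gap leaves a maximum-weight sequence of length $n-\rho_n-1$ or $n-\rho_n-2$, and the two cases give the same total weight. So $|\mathcal{M}^*_n|=|\mathcal{M}^*_{n-\rho_n-1}|+|\mathcal{M}^*_{n-\rho_n-2}|$. The perfect sizes $n=T_{k+1}-2$, with unique maximizer $b=(1,2,\dots,k)$, are handled separately. Pascal's rule then yields $\binom{\rho_n}{\lambda_n}+\binom{\rho_n+1}{\lambda_n-1}$, and the same induction gives the weight formula.
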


The sequence 
\begin{equation}
\label{eqn:count-seq}
 \{ |\mathcal{Z}_{n+1}(\pattern)| \}_{n \geq 1} = 1 , 2, 2, 1, 3, 4, 3, 1, 4, 7, 7, 4, 1, 5, 11, 14, 11, 5, \ldots   
\end{equation}
matches sequence A209561  and the sequence
\begin{equation}
\label{eqn:weight-seq}
 \{ \maxinv \permset{n+1}(\pattern)  \}_{n \geq 1} = 1, 2, 4, 7, 10, 14, 19, 25, 31, 38, 46, 55, 65, 75, 86, 98, 111, 125, \ldots 
\end{equation}
matches sequence A023536 in the OIES \cite{oeis}.

%The interpretations and formulas of Theorem~\ref{thrm:correct-perm} are novel to the OEIS.

Theorem~\ref{thrm:correct-perm} has an immediate corollary that extends this result to three other vincular patterns.
Let $\mininv \permset{n}(\sigma)$ 
%$= \max_{\pi \in \permset{n}(\scriptsize{\pattern)}} \inv(\pi)$ 
denote the minimum inversion number for $\sigma$-avoiding permutations of $[n]$. 
Let $\mathcal{Y}_n(\sigma) \subset \permset{n}(\sigma)$ denote the subset of permutations achieving $\mininv \permset{n}(\sigma)$.

\begin{cor} 
\label{cor:correct-perm}
We have
$$
|\mathcal{Z}_{n+1}(3\underline{21})| = |\mathcal{Y}_{n+1}(1\underline{23})| = |\mathcal{Y}_{n+1}(\underline{12}3)|
=
\binom{\row{n}}{\ind{n}}+\binom{\row{n}+1}{\ind{n}-1}
$$
and
$$
    \maxinv \permset{n+1}(3\underline{21})= 
    %\T_{n-\row{n}} 
    {n-\row{n} +1 \choose 2} +  { \rho_n + 2 \choose 3}
$$
while
$$
    \mininv \permset{n+1}(1\underline{23}) = \mininv \permset{n+1}(\underline{12}3) = {n+1 \choose 2} - { \rho_n + 2 \choose 3} - 
    %\T_{n-\row{n}}.
    {n-\row{n} +1 \choose 2}.
$$
\end{cor}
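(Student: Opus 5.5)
The plan is to derive everything from Theorem~\ref{thrm:correct-perm} using the two natural symmetries of $\permset{n+1}$: reversal $r(\pi)_i = \pi_{n+2-i}$ and complement $c(\pi)_i = n+2-\pi_i$. Each is a bijection of $\permset{n+1}$. The key is to track how each one acts on vincular patterns and on inversion number.

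For patterns: reversal reverses the positions of an occurrence and preserves adjacency. So $\pi$ contains $\underline{32}1$ if and only if $r(\pi)$ contains $1\underline{23}$; indeed the triple $\pi_i>\pi_{i+1}>\pi_j$ becomes a triple read left to right as low, middle, high, with the last two adjacent. Complement reverses values and fixes positions, so $\pi$ contains $\underline{32}1$ if and only if $c(\pi)$ contains $\underline{12}3$. The composition $rc$ sends $\underline{32}1$ to $3\underline{21}$. Hence $r$, $c$ and $rc$ restrict to bijections from $\permset{n+1}(\underline{32}1)$ onto $\permset{n+1}(1\underline{23})$, $\permset{n+1}(\underline{12}3)$ and $\permset{n+1}(3\underline{21})$, respectively.

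For inversions: a pair of positions is an inversion of $\pi$ exactly when the corresponding pair is a non-inversion of $r(\pi)$, and likewise for $c(\pi)$. Therefore
\[
\inv(r(\pi)) = \inv(c(\pi)) = \binom{n+1}{2} - \inv(\pi),
\qquad
\inv(rc(\pi)) = \inv(\pi).
\]
So $rc$ maps $\mathcal{Z}_{n+1}(\underline{32}1)$ bijectively onto $\mathcal{Z}_{n+1}(3\underline{21})$ and preserves the maximum. Meanwhile $r$ and $c$ turn the maximizers into minimizers: they map $\mathcal{Z}_{n+1}(\underline{32}1)$ onto $\mathcal{Y}_{n+1}(1\underline{23})$ and $\mathcal{Y}_{n+1}(\underline{12}3)$, respectively. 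The resulting minimum is $\binom{n+1}{2}$ minus the maximum from Theorem~\ref{thrm:correct-perm}.

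Substituting the count and the value from Theorem~\ref{thrm:correct-perm} then gives every stated formula. The only step needing care is verifying the pattern correspondences, in particular that the underline lands on the correct pair of letters after reversal; this is a direct check of indices. There is no real obstacle beyond that.
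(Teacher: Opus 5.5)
Your proposal is correct and is essentially the paper's own argument. Reversal, complement, and their composition map $\permset{n+1}(\pattern)$ bijectively onto $\permset{n+1}(1\underline{23})$, $\permset{n+1}(\underline{12}3)$, and $\permset{n+1}(3\underline{21})$; $r$ and $c$ send $\inv$ to $\binom{n+1}{2}-\inv$ while $rc$ preserves it, so the counts and extremal values transfer directly from Theorem~\ref{thrm:correct-perm}. Your normalization $c(\pi)_i = n+2-\pi_i$ is the correct one on $\permset{n+1}$, whereas the paper's $n+1-\pi_k$ is off by one there.
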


% \begin{proof}
% Given a permutation $\pi = \pi_1 \pi_2 \cdots \pi_{n+1} \in \permset{n+1}$, its
% \emph{reverse} is $r(\pi) = \pi_{n+1} \pi_{n} \cdots \pi_1$ and its complement is $c(\pi) = \pi_1' \pi_2' \cdots \pi_{n+1}'$ where $\pi_k' = n+1 - \pi_k$. It is clear that $\pi \in \permset{n+1}(\pattern)$ if and only if $r(\pi) \in \permset{n+1}(1\underline{23})$ and $c \circ r(\pi) \in \permset{n+1}(3\underline{21})$ and $r \circ c \circ r(\pi) \in \permset{n+1}(\underline{12}3)$. Furthermore, $\inv(\pi) = \inv(c \circ r(\pi))$ while
% $\inv(r(\pi)) = \inv(r \circ c \circ r(\pi)) = {n+1 \choose 2} - \inv(\pi)$.
% The three statements of the corollary follow directly from the theorem, since
% $\pi$ achieves $\maxinv \permset{n+1}(\pattern)$ if and only if $r(\pi)$, $c \circ r(\pi)$ and $r \circ c \circ r(\pi)$ achieve 
% $\mininv \permset{n+1}(1\underline{23})$, $\maxinv \permset{n+1}(3\underline{21})$ and $\mininv \permset{n+1}(\underline{12}3)$, respectively.
% \end{proof}

\begin{proof}
Given a permutation $\pi = \pi_1 \pi_2 \cdots \pi_{n+1} \in \permset{n+1}$, its
\emph{reverse} is $\pi^r = \pi_{n+1} \pi_{n} \cdots \pi_1$ and its complement is $\pi^c = \pi_1' \pi_2' \cdots \pi_{n+1}'$ where $\pi_k' = n+1 - \pi_k$. It is clear that $\pi \in \permset{n+1}(\pattern)$ if and only if $\pi^r \in \permset{n+1}(1\underline{23})$  and $\pi^{c} \in \permset{n+1}(\underline{12}3)$ and $\pi^{r.c} \in \permset{n+1}(3\underline{21})$. Furthermore, $\inv(\pi) = \inv(\pi^{r.c})$ while
$\inv(\pi^r) = \inv(\pi^{c}) = {n+1 \choose 2} - \inv(\pi)$.
The three statements of the corollary follow directly from the theorem, since
$\pi$ achieves $\maxinv \permset{n+1}(\pattern)$ if and only if $\pi^r$, $\pi^{c}$ and $\pi^{r.c}$ achieve 
$\mininv \permset{n+1}(1\underline{23})$, $\mininv \permset{n+1}(\underline{12}3)$ and $\maxinv \permset{n+1}(3\underline{21})$, respectively.
\end{proof}

\begin{remark}
Our main theorem is stated for $\mathcal{Z}_{n+1}(\pattern)$ rather than $\mathcal{Z}_{n}(\pattern)$. The reason will become clear in the next section, when we biject $\mathcal{S}_{n+1}(\pattern)$ to the family $\allset{n}$ of \emph{jump-float sequences} of length $n$. Moreover, the meaning behind the numbers $\row{n}$ and $\ind{n}$ (and their relationship to the triangle numbers $\T_i$) will be spelled out.
\end{remark}

\begin{remark}
Determining the maximum inversion number for 321-avoiding permutations is straightforward. A permutation $\pi \in \permset{n}$ has a 321 pattern when there exist $1 \leq i < j < k \leq n$ such that $\pi_i > \pi_j > \pi_k$. It is well-known that $\pi$ avoids the 321 pattern if and only if its elements can be partitioned into (at most) two strictly increasing subsequences. When these subsequences have lengths $m$ and $n-m$, the maximum number of possible inversions between them is $m(n-m)$, and this value is achieved by the permutation
$$
(m+1, m+2, \ldots, n, 1, 2, \ldots, m).
$$
The optimal choice is to split the permutation as evenly as possible, and this achieves the maximum inversion number $\lfloor n/2 \rfloor \cdot \lceil n/2 \rceil$. When $n$ is even, there is a unique maximizing permutation; when $n$ is odd, there are two. 
\end{remark}

\subsection{Roadmap}

The structure of the paper is as follows.
In Section~\ref{sec:jump-float}, we introduce the \emph{jump-float sequence}, which is a type of inversion sequence for $\pattern$-avoiding permutations. The section culminates in the statement of Theorems~\ref{thrm:unique} and \ref{thrm:correct}, which are a rephrasing of Theorem~\ref{thrm:correct-perm} in terms of jump-float sequences. 
Section~\ref{sec:max-weight} characterizes the structure of maximum weight jump-float sequences, which correspond to $\pattern$-avoiding permutations with maximum inversion number. In particular, we introduce the \emph{block sequence} as an alternative representation of a maximum weight jump-float sequence. 
Section~\ref{sec:results} starts with the proofs of Theorems~\ref{thrm:unique} and~\ref{thrm:correct}. We then verify the OEIS entries for sequences \eqref{eqn:count-seq} and \eqref{eqn:weight-seq}. Finally, we state and prove Theorem~\ref{thrm:structure}, which gives a direct construction method for the block sequences of 
$\mathcal{Z}_{n+1}(\pattern)$, illuminating why the size of this family is counted by the sum of the binomial coefficients $\binom{\row{n}}{\ind{n}}+\binom{\row{n}+1}{\ind{n}-1}$. We reflect on our results in Section~\ref{sec:conclusion}.

%% file: jump-float.tex
\section{Jump-float sequences}
\label{sec:jump-float}

In order to prove our main theorem, we need a representation of $\pattern$-avoiding permutations that more prominently features the inversion number. With that in mind, we define the family of jump-float sequences and show their equivalence to $\pattern$-avoiding permutations. This family admits a natural poset structure, and we characterize the maximal elements. We then translate our main result, Theorem~\ref{thrm:correct-perm}, in terms of jump-float sequences with maximum weight. 

\subsection{Jump-float sequences and $\pattern$-avoiding permutations}

To better highlight the inversion number of a permutation, we define the family of jump-float sequences and give a simple bijection to $\pattern$-avoiding permutations.

\begin{definition} A sequence $s=(s_1,\ldots s_n)$  is a \emph{jump-float sequence} when
\begin{enumerate}
    \item[(a)] $0 \leq s_i\leq i$ for $1 \leq i \leq n$.
    %\item If $s_{i} > 0$ then $s_{i+1} \leq s_{i}$ for $1 \leq i \leq n-1$.
    \item[(b)] If $s_{i} > s_{i-1}$ then $s_{i-1}=0$ for $2 \leq i \leq n$.
\end{enumerate}
The set of jump-float sequences of length $n$ is denoted $\allset{n}$. The \emph{weight} of  jump-float sequence $s$ is
$$
w(s) = \sum_{i=1}^n s_i.
$$
\label{def:jf-sequence}
\end{definition}
% We note that
% an immediate consequence of condition (b) is the following: if $i < k$ and $s_i < s_k$, then there exists $i < j < k$ such that $s_j=0$.
% Turning to some examples, the fifteen jump-float sequences of $\allset{3}$ are
% $$
% \begin{array}{ccccccccc}
% 000 & 001 & 002 & 003 & 010 & 011 & 020 & 021 & 022 \\
% 100 & 101 & 102 & 103 & 110 & 111.
% \end{array}
% $$
 The annotated example 
$$
(\underbrace{0,0}_{\mbox{\tiny{float}}}, \overset{\mbox{\tiny{jump}}}{3},\underbrace{3,2,2,0,0}_{\mbox{\tiny{float}}},
\overset{\mbox{\tiny{jump}}}{5}, \underset{\mbox{\tiny{float}}}{0},
\overset{\mbox{\tiny{jump}}}{7},\underbrace{4,4,1,1,1,0}_{\mbox{\tiny{float}}},
\overset{\mbox{\tiny{jump}}}{9},\underbrace{8,7}_{\mbox{\tiny{float}}}) \in \allset{20}
$$
illustrates the ``jump-float'' name. A ``jump'' is an ascent from $s_{i-1}=0$ to $s_{i}>0$. After each jump, we have a ``float'' subsequence of weakly decreasing entries, which must reach 0 before the next jump. 
A jump-float sequence $s=(s_1, s_2, \ldots,s_n)$ has a natural geometric representation as columns of (filled) boxes in a triangular grid, where column $i$ contains $s_i \leq i$ boxes, see Figure~\ref{fig:triangle example}. This triangle representation will be a useful visualization of  constructions in the sections that follow.

\begin{figure}[h!]
    \centering
    \begin{tikzpicture}[scale=.6]

    \begin{scope}

        \tri[(0,0)]{1,1,0,3,0,0,6,4}{violet!50};
        \node (l1) at (0,-.75) {$s=(1,1,0,3,0,0,6,4)$};
        \node (l1a) at (0,-1.75) {$\phi(s)=581273694$};
        %\node (l1a) at (0,-1.75) {$\phi(s)=(5,8,1,2,7,3,6,9,4)$};

    \end{scope}

    \begin{scope}[shift={(8,0)}]
        \tri[(0,0)]{0,2,0,4,4,3,1,0}{violet!50};
        \node (l2) at (0,-.75) {$t=(0,2,0,4,4,3,1,0)$};
        \node (l2a) at (0,-1.75) {$\phi(t)=136892745$}; 
        %\node (l2a) at (0,-1.75) {$\phi(t)=(1,3,6,8,9,2,7,4,5)$}; 
    \end{scope}

    \begin{scope}[shift={(16,0)}]
        \tri[(0,0)]{1,0,3,3,0,6,6,6}{violet!50};
        \node (l2) at (0,-.75) {$p=(1,0,3,3,0,6,6,6)$};
        \node (l2a) at (0,-1.75) {$\phi(p)=789156243$}; 
        %\node (l2a) at (0,-1.75) {$\phi(p)=(7,8,9,1,5,6,2,4,3)$}; 
    \end{scope}
    
    \end{tikzpicture}
    \caption{Jump-float sequences $s,t,p \in \allset{8}$ with their triangle representations and their corresponding $\pattern$-avoiding permutations $\phi(s), \phi(t), \phi(p) \in \permset{9}(\pattern)$.}
    \label{fig:triangle example}
\end{figure}

The bijection between jump-float sequences $\allset{n}$ and $\pattern$-avoiding permutations of $[n+1]$ is straightforward. In fact, $\allset{n}$ is a minor variation on the Lehmer code for a permutation \cite{lehmer}.

\begin{definition}
    The collection of Lehmer codes of length $n$ is 
    $$
    \lehmerset{n} = \{ (\ell_1, \ell_2, \ldots, \ell_n) : 0 \leq \ell_i \leq n-i \text{ for } 1 \leq i \leq n \}.
    $$
    % $$
    % = \{0, 1, \ldots, n-1 \} \times \{0, 1, \ldots, n-2 \} 
    % \times \cdots \times \{0, 1, 2 \} \times \{0,1\} \times \{0\}.
    % $$
    For a permutation $\pi \in S_n$, its Lehmer code is
$$
L(\pi) := ( \ell_1, \ell_2,\ldots , \ell_n) \quad \mbox{where} \quad
\ell_i = \left| \left\{ j > i : \pi_j < \pi_i \right\} \right|,
$$     
% $$
% L(\pi) := ( L(\pi)_1, \ldots , L(\pi)_n) \quad \mbox{where} \quad
% L(\pi)_i = \left| \left\{ j > i : \pi_j < \pi_i \right\} \right|,
% $$ 
and this mapping from $\permset{n}$ to $\lehmerset{n}$ is a bijection.

\end{definition}

For example, the Lehmer code of $ \pi = 51423$ is $L(\pi) = (4,0,2,0,0)$. Note that the $i$th entry counts the number of \emph{inversions} for $\pi_i$, namely the number of entries $\pi_j$ to the right of $\pi_i$ that are smaller than $\pi_i$. The sum $\sum_{i=1}^n \ell_i = \inv(\pi)$ is the inversion number of the permutation $\pi$. 
Lehmer codes and other inversion sequences, have been extensively studied. For example, researchers have explored the poset structure of Lehmer codes \cite{bouvel, denoncourt, tomie}, as well as pattern  avoidance in inversion sequences themselves \cite{mansour-2015, corteel, martinez-2016, lin_vincular_2020, testart}. We now connect  jump-float sequences to Lehmer codes of $\pattern$-avoiding permutations.

\begin{lemma}
\label{lem:jump-float-bijection}
Jump-float sequences $\allset{n}$ are in bijection with $\pattern$-avoiding permutations $\permset{n+1}(\pattern)$. Furthermore, the weight of a jump-float sequence equals the inversion number of its corresponding permutation.
\end{lemma}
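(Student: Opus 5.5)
The plan is to take the bijection to be a reversed Lehmer code. For $\pi\in\permset{n+1}$ with $L(\pi)=(\ell_1,\ldots,\ell_{n+1})$, we always have $\ell_{n+1}=0$, so I drop it and reverse what remains:
$$
\phi^{-1}(\pi)=s=(s_1,\ldots,s_n), \qquad s_i=\ell_{n+1-i}.
$$
On the examples in Figure~\ref{fig:triangle example} this matches. For instance, $L(581273694)=(4,6,0,0,3,0,1,1,0)$, and dropping the final $0$ and reversing gives $(1,1,0,3,0,0,6,4)$. The range condition $0\le \ell_j\le (n+1)-j$ becomes exactly $0\le s_i\le i$, which is condition (a). Since $L$ is a bijection $\permset{n+1}\to\lehmerset{n+1}$, the map $\pi\mapsto s$ is a bijection from $\permset{n+1}$ onto the set of sequences satisfying (a). The weight statement is then immediate, because $w(s)=\sum_j \ell_j=\inv(\pi)$.

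It remains to show that $\pi$ avoids $\pattern$ if and only if $s$ satisfies condition (b). Reindexing with $j=n+1-i$, the pair $(s_{i-1},s_i)$ is the pair $(\ell_{j+1},\ell_j)$. So (b) says there is no $j$ with $\ell_j>\ell_{j+1}>0$.

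The key lemma is that $\pi_j>\pi_{j+1}$ if and only if $\ell_j>\ell_{j+1}$. I would prove it by comparing the sets counted by $\ell_j$ and $\ell_{j+1}$.
\begin{itemize}
\item If $\pi_j>\pi_{j+1}$, then every entry to the right of position $j+1$ that is smaller than $\pi_{j+1}$ is also smaller than $\pi_j$. In addition, $\pi_{j+1}$ itself is counted by $\ell_j$. Hence $\ell_j\ge \ell_{j+1}+1$.
\item If $\pi_j<\pi_{j+1}$, then $\pi_{j+1}$ is not counted by $\ell_j$. Every later entry smaller than $\pi_j$ is also smaller than $\pi_{j+1}$. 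Hence $\ell_j\le\ell_{j+1}$.
\end{itemize}

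With this lemma the equivalence is direct. An occurrence of $\pattern$ at positions $i,i+1,k$ means exactly that $\pi_i>\pi_{i+1}$ and some $k>i+1$ has $\pi_k<\pi_{i+1}$. The first condition is equivalent to $\ell_i>\ell_{i+1}$, and the second to $\ell_{i+1}>0$. So $\pi$ contains $\pattern$ if and only if $\ell_i>\ell_{i+1}>0$ for some $i$, which is precisely a violation of (b) after reversal.

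I expect the only real care point to be keeping the index reversal straight, in particular that the dropped entry $\ell_{n+1}=0$ never affects condition (b). The case $j=n$ would compare $\ell_n$ with $\ell_{n+1}=0$, and $\ell_n>\ell_{n+1}>0$ can never hold, so no information is lost by dropping $\ell_{n+1}$.
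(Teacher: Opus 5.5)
Your proposal is correct and follows essentially the same route as the paper. Both proofs use the reversed Lehmer code with the forced final zero dropped, match condition (a) to the Lehmer range, and match condition (b) to $\pattern$-avoidance via two facts: $\pi_j>\pi_{j+1}$ if and only if $\ell_j>\ell_{j+1}$, and $\ell_{j+1}=0$ exactly when $\pi_{j+1}$ is smaller than every entry to its right. You differ only in proving the descent comparison, which the paper merely asserts as an observation, and in combining both directions into the single criterion $\ell_i>\ell_{i+1}>0$.
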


\begin{proof}
We define a bijection $\phi: \allset{n} \rightarrow \permset{n+1}(\pattern)$.
Given a jump-float sequence $s=(s_1, s_2, \ldots, s_n) \in \allset{n}$, let \begin{equation}
\label{eqn:lehmer}
   r = (r_1, r_2, \ldots, r_n, r_{n+1}) :=(s_n, s_{n-1}, \ldots,s_1, 0) \in \lehmerset{n+1} 
\end{equation}
be a Lehmer code with corresponding $[n+1]$-permutation $\pi = (\pi_1, \pi_2, \ldots, \pi_n, \pi_{n+1})$. In other words, $r_i = s_{n+1-i}$ for $1 \leq i \leq n$ and $r_{n+1}=0$. The conditions of Definition~\ref{def:jf-sequence} become (a) $0 \leq r_i \leq n+1-i$, and (b) if $r_{j} > r_{j+1}$ then $r_{j+1}=0$ for $1 \leq j \leq n-1$.

For $1 \leq j \leq n$, observe that $\pi_j > \pi_{j+1}$ if and only if $r_j > r_{j+1}$. 
If $r$ was obtained from a jump-float sequence $s$, then $r_j > r_{j+1}$ forces $r_{j+1}=0$, which means that $\pi_k > \pi_{j+1}$ for all $k > j+1$.
So $j$ cannot be the start of a $\pattern$ pattern.

On the other hand, consider $\pi$ that is $\pattern$-avoiding. Suppose that $\pi_j > \pi_{j+1}$. It must be the case that $\pi_{j+1} > \pi_k$ for all $k > j+1$ (otherwise there is a $k > j$ so that $(j,j+1,k)$ form a $\pattern$ pattern). But this means that $r_{j+1}=0$ in the Lehmer code.

The equality of the weight $w(s)$ and the inversion number $\inv(\pi)$ for is clear.
\end{proof}

See Figure~\ref{fig:triangle example} for three examples of the bijective mapping $\phi: \allset{8} \rightarrow \permset{9}(\pattern)$.

\begin{cor}
   We have $| \allset{n}| = B_{n+1}$, the $(n+1)$th Bell number.
\end{cor}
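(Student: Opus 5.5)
Via Lemma~\ref{lem:jump-float-bijection}, it is enough to prove that $|\permset{n+1}(\pattern)| = B_{n+1}$, since $\phi$ is a bijection from $\allset{n}$ onto $\permset{n+1}(\pattern)$. I will not count jump-float sequences directly. Instead I will show that $\pattern$-avoiding permutations of $[m]$ are in bijection with set partitions of $[m]$. This is Claesson's classical result, which could simply be cited, but I sketch a self-contained argument.

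Let $\pi \in \permset{m}$ avoid $\pattern$, and cut $\pi$ into its maximal increasing runs (ascending runs). The proof of Lemma~\ref{lem:jump-float-bijection} shows that whenever $\pi_j > \pi_{j+1}$, every later entry exceeds $\pi_{j+1}$. In other words, the first entry of each run is smaller than everything to its right. So the first entries of the runs form a left-to-right sequence of successive right-to-left minima, and in particular they increase from left to right.

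Each run is an increasing set, so I map $\pi$ to the set partition whose blocks are the runs. For the inverse, take a set partition of $[m]$, list its blocks so that their minima decrease, write each block in increasing order, and concatenate. In the resulting word, every descent goes from the last element of one block to the minimum of the next block. That minimum is smaller than every element of the later blocks, because the minima decrease and each block begins at its minimum. Hence no $\pattern$ occurs.

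It remains to check that the two maps are mutually inverse. The substantive point is that the runs of $\pi$ appear in decreasing order of their minima, and this is exactly the property established above. The order is forced, so the partition determines $\pi$ uniquely, and the runs of the reconstructed word are exactly the blocks.

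Thus $|\permset{m}(\pattern)| = B_m$. Taking $m = n+1$ gives $|\allset{n}| = B_{n+1}$.

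The main step to verify carefully is that the runs, rather than some coarser grouping, are recovered from the concatenation. This needs adjacent blocks to always produce a descent, which holds because the maximum of the earlier block is at least its minimum, and that minimum exceeds the minimum of the next block.
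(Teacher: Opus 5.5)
Your reduction to $|\permset{n+1}(\pattern)| = B_{n+1}$ via Lemma~\ref{lem:jump-float-bijection} is exactly what the paper does; the paper then simply cites Claesson. Your self-contained bijection, however, has a genuine gap: maximal ascending runs are the wrong decomposition.

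\textbf{The runs map is not a bijection.} For $m=3$ the five $\pattern$-avoiders are $123,132,213,231,312$. Both $132$ (runs $13\,|\,2$) and $213$ (runs $2\,|\,13$) go to the same partition $\{\{1,3\},\{2\}\}$. Meanwhile $\{\{1\},\{2\},\{3\}\}$ is never reached, because all-singleton runs force $321$, which contains $\pattern$.

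\textbf{The argument is also internally inconsistent.} You correctly derive that run minima \emph{increase} from the second run on; the first run is unconstrained, as in $581273694$. Your inverse, however, lists blocks with \emph{decreasing} minima, and this sends $\{1\},\{2\},\{3\}$ to $321$. So the sentence ``that minimum is smaller than every element of the later blocks, because the minima decrease'' is false. If you reverse the order to increasing minima, avoidance holds, but adjacent blocks no longer need to create a descent: $\{1\},\{2\},\{3\}\mapsto 123$ is a single run. The runs then fail to recover the blocks, which is precisely the step you flagged as needing care.

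\textbf{The fix:} cut $\pi$ immediately \emph{after} each right-to-left minimum. These are the positions with Lehmer entry $r_i=0$, i.e.\ the zeros of $s$ plus the final position, which matches the paper's remark that zeros of $s$ start new blocks.
\begin{itemize}
\item Every descent bottom of a $\pattern$-avoider is a right-to-left minimum. Hence the non-final entries of a segment contain no descent and are increasing.
\item Each non-final entry exceeds the segment's last entry $\mu$; otherwise it would itself be a right-to-left minimum. So every segment has the form ``increasing word, then its minimum.''
\item The segment minima are right-to-left minima, so they increase from left to right.
\end{itemize}
Conversely, given a set partition, order its blocks by increasing minimum, write each block as its non-minimal elements in increasing order followed by its minimum, and concatenate. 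Every descent then lands on a block minimum, which is smaller than everything to its right, so the word avoids $\pattern$. Its right-to-left minima are exactly the block minima, so cutting after them recovers the partition. Equivalently, reverse $\pi$ and use Claesson's left-to-right-minima decomposition for $1\underline{23}$.
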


\begin{proof}
Claesson (Propositions 1 and 2 of \cite{claesson}) showed that $|\permset{n}(\pattern)|=B_n$, the $n$th Bell number, and hence $| \allset{n}| = B_{n+1}$.     
\end{proof}

Further, there is a natural bijection between jump-float sequences and partitions of $[n+1]$, where zeros in the sequence correspond to the start of a new block in the partition. 

\begin{remark}
The mapping \eqref{eqn:lehmer} between jump-float sequences and Lehmer codes for $\pattern$-avoiding permutations is elementary. However, this sequence reversal and the omission of the Lehmer code's final zero greatly simplifies the indexing in our results and the readability of the arguments. %So this simple transformation is crucial.
\end{remark}

\subsection{Maximal jump-float sequences}

Having defined the family of jump-float sequences, we notice they admit a natural partial ordering. As such, we describe here the poset structure of $\allset{n}$. We also observe that the family of $\pattern$-avoiding permutations with maximum inversion number corresponds to the family of jump-float sequences with maximum weight.

\begin{lemma}
    For $s,t \in \allset{n}$ we write $s\preceq t$ when $s_i\leq t_i$ for all $i$. The relation $\preceq$ defines a partial ordering on $\allset{n}$, and the resulting poset is a meet-semilattice.
\end{lemma}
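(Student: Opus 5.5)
First, the partial order itself. The relation $\preceq$ is the restriction to $\allset{n}$ of the componentwise order on $\mathbb{Z}^n$. It is reflexive, antisymmetric and transitive coordinatewise, so it is a partial order on any subset, including $\allset{n}$.

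The substance is the meet-semilattice claim. Given $s,t \in \allset{n}$, I would take the candidate meet to be the componentwise minimum $m$, with $m_i = \min(s_i,t_i)$, and prove that $m \in \allset{n}$. Once that holds, $m$ is automatically the greatest lower bound in $\allset{n}$. Indeed, $m \preceq s$ and $m \preceq t$. Moreover, any $u \in \allset{n}$ with $u \preceq s$ and $u \preceq t$ satisfies $u_i \le \min(s_i,t_i) = m_i$ for every $i$.

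It remains to check the two conditions of Definition~\ref{def:jf-sequence} for $m$.

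\textbf{Condition (a).} This is immediate: $0 \le \min(s_i,t_i) \le i$.

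\textbf{Condition (b).} This is the only step with content. Suppose $m_i > m_{i-1}$; we must show $m_{i-1} = 0$. By symmetry, assume $m_{i-1} = s_{i-1}$, so $s_{i-1} \le t_{i-1}$. Then
\[
s_i \ge \min(s_i,t_i) = m_i > m_{i-1} = s_{i-1}.
\]
So $s$ has an ascent at position $i$, and condition (b) for $s$ forces $s_{i-1} = 0$. Hence $m_{i-1} = 0$, as required.

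There is no real obstacle here. The only point needing care is that the minimum at position $i-1$ is attained by some sequence, and that sequence itself ascends at $i$. I would also remark that $\allset{n}$ is nonempty and has the zero sequence as its bottom element. Pairwise meets therefore exist, which is all a meet-semilattice requires.

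Joins generally fail: for instance $(1,0)$ and $(0,2)$ in $\allset{2}$ have componentwise maximum $(1,2)$, which violates condition (b). This is why only a meet-semilattice is claimed, and it motivates studying the maximal elements next.
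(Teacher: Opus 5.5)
Your proof is correct and is essentially the paper's argument: take the componentwise minimum, show it lies in $\allset{n}$ by looking at which sequence attains the minimum, and note that every common lower bound is dominated by it. Your check of condition (b) runs in the correct direction: an ascent $m_i > m_{i-1}$ forces $m_{i-1}=0$. The paper's written proof instead checks the mirror-image descent condition, which is really condition (b) for the reversed Lehmer code $r$; the same reasoning works in either direction, so your version matches the definition as stated.
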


\begin{proof}
The proof that $(\allset{n}, \preceq)$ is a poset (reflexivity, antisymmetry, transitivity) is elementary.

We show that the poset is a meet-semilattice, meaning that any pair of elements has a greatest lower bound. Given jump-float sequences $s=(s_1, \ldots, s_n)$ and $t=(t_1,\ldots,t_n)$, we claim that
$$ s \wedge t := (\min \{ s_1, t_1 \}, \min \{ s_2, t_2 \}, \ldots, \min \{ s_n, t_n \})$$ 
is a jump-float sequence. Indeed, if $\min \{ s_i, t_i \} > \min \{ s_{i+1}, t_{i+1} \}$ then either $s_i > s_{i+1}$ (so $s_{i+1}=0$) or $t_i > t_{i+1}$ (so $t_{i+1}= 0$). It is clear that any $u \in \allset{n}$ such that
$u \preceq s$ and $u \preceq t$ also satisfies $u \preceq s \wedge t$.
%However the poset is not a join-semilattice. Indeed, there is no jump-float sequence that succeeds than both $(0,\ldots,0, n)$ and $(0,\ldots, 0, n-1,0)$, so they do not have a least upper bound.
\end{proof}

\begin{figure}

\begin{center}
\begin{tikzpicture}[every node/.style={font=\small}, scale=.6]

\node (000) at (0,0) {$(0,0,0)$};

\begin{scope}[shift={(0,2)}]
    
\node (100) at (-2,0) {$(1,0,0)$};
\node (010) at (0,0) {$(0,1,0)$};
\node (001) at (2,0) {$(0,0,1)$};
\end{scope}

\draw (000) -- (100);
\draw (000) -- (010);
\draw (000) -- (001);

\begin{scope}[shift={(0,4)}]

\node (110) at (-4,0) {$(1,1,0)$};    
\node (101) at (-2,0) {$(1,0,1)$};
\node (020) at (0,0) {$(0,2,0)$};
\node (011) at (2,0) {$(0,1,1)$};
\node (002) at (4,0) {$(0,0,2)$};
\end{scope}

\draw (100) -- (110);
\draw (100) -- (101);
\draw (010) -- (110);
\draw (010) -- (011);
\draw (010) -- (020);
\draw (001) -- (101);
\draw (001) -- (011);
\draw (001) -- (002);

\begin{scope}[shift={(0,6)}]

\node (111) at (-4,0) {$(1,1,1)$};    
\node (102) at (-1,0) {$(1,0,2)$};
\node (021) at (1,0) {$(0,2,1)$};
\node (003) at (4,0) {$(0,0,3)$};

\end{scope}

\draw (110) -- (111);
\draw (101) -- (111);
\draw (011) -- (111);

\draw (101) -- (102);
\draw (002) -- (102);

\draw (020) -- (021);
\draw (011) -- (021);
\draw (002) -- (003);

\begin{scope}[shift={(0,8)}]

\node (103) at (-1,0) {$(1,0,3)$};
\node (022) at (1,0) {$(0,2,2)$};

\end{scope}

\draw (102) -- (103);
\draw (003) -- (103);
\draw (021) -- (022);
\draw (002) -- (022);
    
\end{tikzpicture}
\end{center}

\caption{The meet-semilattice $(\allset{3}, \preceq)$.}
\label{fig:jump-float-3}
\end{figure}

Figure~\ref{fig:jump-float-3} shows the meet-semilattice $(\allset{3}, \preceq)$. The unique minimal element is $(0,0,0)$. The three maximal elements are $(1,1,1)$, $(1,0,3)$ and $(0,2,2)$; note that only two of them achieve the maximum weight 4. Our next lemma characterizes the maximal sequences of $\allset{n}$.

\begin{lemma}
\label{lemma:max-skip-sequence}
A jump-float sequence $s = (s_1, s_2, \ldots, s_n)$ is maximal in $(\allset{n}, \preceq)$ if and only if
the following conditions hold:
    \begin{enumerate}[(a)]
        \item We have $s_n>0$.
        \item If $s_{i-1}>0$, then $s_i \in \{0,s_{i-1}\}$ for $2\leq i \leq n$.
        \item If $s_{i-1}=0$, then $s_{i}=i$ for $2 \leq i \leq n$.
    \end{enumerate}
\end{lemma}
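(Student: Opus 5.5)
We prove the two directions separately. In both, the only tool needed is the observation that \emph{raising a single entry} $s_i$ to a larger value $s_i'$ (with all other entries unchanged) gives another jump-float sequence precisely when three things hold. First, $s_i' \leq i$. Second, condition (b) of Definition~\ref{def:jf-sequence} holds at position $i$: either $s_{i-1}=0$ or $s_i' \leq s_{i-1}$. Third, condition (b) holds at position $i+1$: either $s_{i+1}=0$ or $s_{i+1}\leq s_i'$. The third condition is automatically preserved when $s_i$ grows, since $s_{i+1}\le s_i \le s_i'$ whenever $s_{i+1}\neq 0$ and $s_{i+1} > s_i$ was already excluded. So raising $s_i$ is legal if and only if $s_i' \le i$ and ($s_{i-1}=0$ or $s_i'\le s_{i-1}$).

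\emph{Necessity.} Suppose $s$ is maximal, so no single entry can be raised.
\begin{enumerate}[(i)]
\item If $s_n=0$, then setting $s_n$ to $s_{n-1}$ (or to $n$ if $s_{n-1}=0$) is a legal raise, because there is no position $n+1$ to check. This contradicts maximality, so (a) holds.
\item If $s_{i-1}=0$ and $s_i<i$, then raising $s_i$ to $i$ is legal. This gives (c), and also covers $i=1$ implicitly.
\item If $s_{i-1}>0$ and $0<s_i<s_{i-1}$, then raising $s_i$ to $s_{i-1}$ is legal. This gives (b).
\end{enumerate}
Note that (c) for $i$ with $s_{i-1}=0$ also forces $s_i>0$.

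\emph{Sufficiency.} Assume (a)--(c) and suppose $t\succeq s$ is a jump-float sequence with $t\neq s$. Take the smallest index $i$ with $t_i>s_i$.
\begin{enumerate}[(i)]
\item If $i=1$, or if $i\ge2$ and $s_{i-1}=0$, then $s_i=i$. (For $i=1$ this needs $s_1=1$, discussed below.) Then $t_i>i$ is impossible.
\item If $s_{i-1}>0$ and $s_i=s_{i-1}$, then $t_{i-1}=s_{i-1}<t_i$. Condition (b) for $t$ then forces $t_{i-1}=0$, a contradiction.
\item If $s_{i-1}>0$ and $s_i=0$, the same argument applies: $t_i>0$ while $t_{i-1}=s_{i-1}>0$ and $t_i>0=s_i$. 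We need $t_i>t_{i-1}$ to get a contradiction, which is not automatic.
\end{enumerate}

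This last case is the main obstacle. A zero following a positive entry seems raisable to any value up to $s_{i-1}$. The resolution is to look at the first index $k>i$ where something is forced. By (c), $s_{i+1}=i+1$, and $t_{i+1}\ge s_{i+1}=i+1 > t_i$ because $t_i\le i$. So (b) for $t$ at position $i+1$ requires $t_i=0$, a contradiction. This uses $i<n$, which holds because $s_n>0$ by (a), so $s_i=0$ forces $i<n$. That is exactly where condition (a) enters.

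For $i=1$ we need $s_1=1$. If $s_1=0$, then $t_1=1$ and $s_2=2$ by (c), which is blocked by the same argument as in case (iii). So position $1$ is covered either way, and no entry of $s$ can be increased. Hence $s$ is maximal.
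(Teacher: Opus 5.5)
Your sufficiency direction is correct, and it goes beyond the paper: the paper only proves necessity (violating (a), (b) or (c) implies non-maximal) and never shows that (a)--(c) force maximality.

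\textbf{The gap.} The problem is in your necessity direction, and it comes from the preliminary ``raising'' claim. The third condition is \emph{not} automatically preserved. The jump-float condition at position $i+1$ gives $s_{i+1}\le s_i$ only when $s_i>0$. When $s_i=0$, nothing restricts $s_{i+1}$ relative to $s_i$, so a new positive value $s_i'$ can be blocked by its successor. Your own sufficiency case (iii) is exactly this situation.

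Consequently your necessity step (ii) fails when $s_{i-1}=s_i=0$ and $s_{i+1}=i+1$. Take $s=(0,0,3)\in\allset{3}$, which violates (c) at $i=2$. Raising $s_2$ to $2$ gives $(0,2,3)$, which is not a jump-float sequence, and in fact no positive value of $s_2$ works. Your remark that (ii) ``covers $i=1$'' fails for the same reason if read with $s_0=0$: $(0,2)$ is maximal even though $s_1=0<1$, because $(1,2)$ is not jump-float.

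\textbf{The repair.} Split step (ii) into two cases.
\begin{enumerate}[(1)]
\item If $0<s_i<i$, your raise is legal, since then $s_{i+1}\le s_i<i$.
\item If $s_i=0$, then $s_{i-1}=s_i=0$, and you should instead raise $s_{i-1}$ from $0$ to $1$. This respects $1\le i-1$. Its predecessor is either $0$ or at least $1$, and its successor is $s_i=0$. For $(0,0,3)$ this produces $(1,0,3)\succ(0,0,3)$.
\end{enumerate}
This is the double-zero observation from the proof of Lemma~\ref{lem:alg}. Your steps (i) and (iii) are fine, since they only raise the last entry or a positive entry.

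The paper's own argument for (c), which raises $s_i$ to $s_i+1$, has exactly the same defect: $(0,0,3)\mapsto(0,1,3)$ is not jump-float. With this fix, your proof gives a complete argument for both directions.
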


\begin{proof}
We prove the contrapositive statement. Let $s=(s_1, s_2, \ldots, s_n) \in \allset{n}$. We show that if $s$ violates one of these conditions then it is not maximal in $(\allset{n}, \preceq)$.    

(a) If $s_n=0$, then let $s' = (s_1, s_2, \ldots, s_{n-1}, \max \{ s_{n-1}, 1\})$. Then $s \prec s'$, so $s$ is not maximal in $\allset{n}$.

(b) Suppose that $s_{i-1} > 0$ and $s_i \notin \{ 0, s_{i-1} \}$. We have $1 \leq s_i < s_{i-1}$, so let $s' = (s_1, \ldots, s_{i-1}, s_i +1, s_{i+1}, \ldots, s_n)$. Clearly $s \prec s'$, so $s$ is not maximal in $\allset{n}$.

(c) Suppose that $s_{i-1}=0$ and $s_i < i$. Let $s' = (s_1, \ldots, s_{i-1}, s_i +1, s_{i+1}, \ldots, s_n)$.  We have $s \prec s'$, so $s$ is not maximal in $\allset{n}$.
\end{proof}

\begin{definition}
    The set of maximal jump-float sequences of length $n$ is denoted $\maxset{n}$. For convenience, we define 
    $\maxset{0} = \{ \emptyset \}.$    
\end{definition}

For $1 \leq n \leq 4$, the set $\maxset{n}$ of maximal jump-float sequences is
\begin{align*}
\maxset{1} &= \{ (1) \} \\
\maxset{2} &= \{ (0,2), (1,1)   \} \\
\maxset{3} &= \{   (0,2,2) , (1,0,3), (1,1,1) \} \\
\maxset{4} &= \{   (0,2,0,4), (0,2,2,2) , (1,0,3,3) ,  (1,1,0,4), (1,1,1,1)  \}. 
%\maxset{5} &= \{  (0,2,0,4,4), (0,2,2,0,5),  (0,2,2,2,2), (1,0,3,0,5), (1,0,3,3,3), \\
%& \qquad (1,1,0,4,4),  (1,1,1,0,5),   (1,1,1,1,1), \}.
\end{align*}
Counting all the maximal jump-float sequences is easy: they are enumerated by the Fibonacci numbers, $F_1=1$, $F_2=1$ and $F_n = F_{n-1} + F_{n-2}$ for $n \geq 3$.

\begin{lemma}
The number of maximal jump-float sequences is $|\maxset{n}|=F_{n+1}$.
\end{lemma}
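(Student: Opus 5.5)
We count $\maxset{n}$ with a recursion on the last entry, using the characterization in Lemma~\ref{lemma:max-skip-sequence}. The plan is to show $|\maxset{n}| = |\maxset{n-1}| + |\maxset{n-2}|$ for $n \geq 2$, with $|\maxset{0}| = 1 = F_1$ and $|\maxset{1}| = 1 = F_2$. The case $\maxset{2} = \{(0,2),(1,1)\}$ then gives $F_3 = 2$ and serves as a sanity check, and induction finishes the proof.

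First we note what the characterization says about $s_1$. Condition (a) of Definition~\ref{def:jf-sequence} gives $s_1 \in \{0,1\}$. By conditions (b) and (c) of Lemma~\ref{lemma:max-skip-sequence}, a maximal sequence therefore decomposes into maximal runs of consecutive positions.
\begin{itemize}
\item Each run starts at a position $i$ with $s_{i-1}=0$ (or with $i=1$ and $s_1=1$).
\item Within the run the entries are constant, equal to $i$.
\item Each run is followed either by a single $0$ or by the end of the sequence.
\end{itemize}
So a maximal sequence is determined by the set of positions holding $0$. These are exactly the positions $j$ with $s_j = 0$, and they satisfy three constraints:
\begin{itemize}
\item no two of them are consecutive, since by (c) a $0$ at position $i-1$ forces $s_i = i > 0$;
\item $n$ is not one of them, by (a);
\item $1$ may or may not be one of them.
\end{itemize}
Conversely, any such set of zero positions gives a sequence satisfying (a)--(c), with value $i$ in the run starting at $i$. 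We check it is a jump-float sequence: entries are at most their index, and every ascent happens right after a $0$.

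This turns the problem into counting subsets $Z \subseteq \{1,\ldots,n-1\}$ with no two consecutive elements. Alternatively, we recurse on position $n-1$:
\begin{itemize}
\item If $s_{n-1} > 0$, then $s_n = s_{n-1}$, and deleting the last entry gives a bijection onto $\maxset{n-1}$. The truncation still ends in a positive entry, and every element of $\maxset{n-1}$ extends uniquely.
\item If $s_{n-1} = 0$, then $s_n = n$, and $s_{n-2} > 0$ (or $n=2$). Deleting the last two entries gives a bijection onto $\maxset{n-2}$, where for $n=2$ we use the convention $\maxset{0}=\{\emptyset\}$.
\end{itemize}
The number of non-adjacent subsets of an $(n-1)$-element path is the standard $F_{n+1}$, which agrees with the recursion.

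The only delicate point is checking that the truncation and extension maps preserve maximality at the boundary. In particular, appending ``$0, n$'' to a maximal sequence of length $n-2$ must satisfy (b) at position $n-1$ (it does, since $0$ is allowed) and (c) at position $n$. Also, the first entry $s_1 \in \{0,1\}$ is not constrained by (b) or (c). Both are routine.
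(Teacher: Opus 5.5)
Your proof is correct, and its recursive part is the paper's argument: the paper splits on whether $s_n=n$ (equivalently $s_{n-1}=0$) and deletes one or two final entries to get $|\maxset{n}|=|\maxset{n-1}|+|\maxset{n-2}|$, only with base cases $n=1,2$ where you use $n=0,1$. Your encoding of a maximal sequence by its zero positions, a non-adjacent subset of $\{1,\ldots,n-1\}$, is a valid direct alternative. Note that it, like the surjectivity of the extension maps in both your recursion and the paper's, uses the ``if'' direction of Lemma~\ref{lemma:max-skip-sequence}; the paper states that direction but proves only ``only if'', though it follows from a short check at the first position where some $t\succeq s$ differs from $s$.
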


\begin{proof}
% Clearly,  $M_0=1$ and $M_1=1$. For $n \geq 2$, we partition $\mathcal{M}_n$ according to whether its penultimate entry is zero or nonzero.

% Let $s = (s_1, s_2, \ldots, s_n) \in \mathcal{M}_n$. If $s_{n-1} >0$, then $s_n = s_{n-1}$, and $(s_1, s_2, \ldots, s_{n-1}) \in \mathcal{M}_{n-1}$.
% On the other hand, if $s_{n-1}=0$, then $s_n=n$ and $(s_1, s_2, \ldots, s_{n-2}) \in \maxset{n-2}$. This proves the recurrence $|\maxset{n}| = |\maxset{n-1}| + |\maxset{n-2}|$.
Clearly,  $M_1=1$ and $M_2=2$. For $n \geq 3$, we partition $\mathcal{M}_n$ according to the value of the last entry of the sequence.

Let $s = (s_1, s_2, \ldots, s_n) \in \mathcal{M}_n$. If $1 \leq s_n \leq n-1$, then $s_n = s_{n-1}$, and $(s_1, s_2, \ldots, s_{n-1}) \in \mathcal{M}_{n-1}$.
On the other hand, if $s_{n}=n$, then $s_{n-1}=0$ and $(s_1, s_2, \ldots, s_{n-2}) \in \maxset{n-2}$. This proves the recurrence $|\maxset{n}| = |\maxset{n-1}| + |\maxset{n-2}|$.
\end{proof}

%%%%% This repeats the above lemma
%%%% Keeping it in case we want to use this inductive argument instead
% Furthermore, the sequences in $\maxset{n}$ have a very specific structure. \todo{Added this lemma because I was craving something that explicitly lays out what a maximal element looks like (setting the stage for block sequences).}

% \begin{lemma}
% If $m \in \maxset{n}$ then 
% \begin{enumerate}[(a)]
%     \item $m_1 \in \{0,1\}$;
%     \item for $2 \leq i \leq n$, if $m_{i-1}=0$ then $m_i=i$, and if $m_{i-1} > 0$ then $m_i = m_{i-1}$;
%     \item $m_n > 0$.
% \end{enumerate}

% \end{lemma}

% \begin{proof}
% We give a proof by strong induction on $n$. The base case $n=1$ is clear: (a) and (c) require that $m_1=1$. Considering $m=(m_1, m_2, \ldots, m_n)$, let $j$ be the largest index such that $m_j=0$. If no such index exists, then $m=(1,1,\ldots, 1)$. Otherwise, the claim holds by induction for $(m_1, m_2, \ldots, m_{j-1})$. We have $m_j=0$, and then
% the unique optimal choice for $(m_{j+1}, m_{j+2}, \ldots, m_n)$ is $(j+1, j+1, \ldots, j+1)$.
% \end{proof}

Observe that for $n \geq 3$ there are maximal jump-float sequences that do not achieve the maximum weight.  Therefore, we need to define one more subset of $\allset{n}$.

\begin{definition}
\label{def:weight}
The maximum weight for jump-float sequences in $\allset{n}$ is \[    w_n^* = \max_{s \in \allset{n}} w(s) = \max_{s \in \maxset{n}} w(s). \]
    The collection of \emph{maximum weight jump-float sequences} of length $n$ is denoted by $\maxweightset{n}$. 
\end{definition}

For $1 \leq n \leq 4$, the set $\maxweightset{n}$ of maximum weight jump-float sequences is
\begin{align*}
\maxweightset{1} &= \{ (1) \} \\
\maxweightset{2} &= \{ (0,2), (1,1)   \} \\
\maxweightset{3} &= \{   (0,2,2) , (1,0,3) \} \\
\maxweightset{4} &= \{    (1,0,3,3)  \} 
%\maxweightset{5} &= \{  (0,2,0,4,4),  (1,0,3,3,3),  (1,1,0,4,4) \},
\end{align*}
and these are the jump-float sequences that correspond to $\pattern$-avoiding permutations with maximum inversion number. Having established this equivalence, we turn our attention to characterizing $\maxweightset{n}$ for the remainder of this work.

\subsection{Rephrasing Theorem~\ref{thrm:correct-perm}}

We now set to rephrasing Theorem~\ref{thrm:correct-perm} in terms of $w^*_n$ and the size of set $\maxweightset{n}$. Lemma~\ref{lem:jump-float-bijection} shows that $w_n^* = \maxinv(\permset{n+1}(\pattern))$ and $|\maxweightset{n}| = |\mathcal{Z}_{n+1}(\pattern)|$. We begin by illuminating the origin of the parameters $\row{n}$ and $\ind{n}$ of Theorem~\ref{thrm:correct-perm}: they describe the location of the jump-float sequence of size $n$ in the following trapezoid.

\begin{definition}
\label{def:size-trapezoid}
Arrange the sequence $\{1,2,3, \ldots\}$ of positive integers into the \emph{size trapezoid} $S(k,\ell)$ where $1 \leq k$ and $0 \leq \ell \leq k+1$, so that row $k$ has $k+2$ entries. The first four rows of $S(k,\ell)$ are
\begin{center}
\begin{tikzpicture}[scale=.8]
%\node at (-4, -1.125) {$S=$};

\node at (0,0) (1){$1$};
\node at (1.5,0) (2){$2$};
\node at (3,0) (3){$3$};
%\node at (4.5,0) (4a){$1$};
\node at (-0.75, -0.75) (4b){$ 4$};
\node at (0.75, -0.75) (5){$ 5$};
\node at (2.25, -0.75) (6){$6$};
\node at (3.75, -0.75) (7){$7$};
%\node at (5.25, -0.75) (8a){$1$};
\node at (-1.5, -1.5) (8b){$8$};
\node at (0, -1.5) (9){$9$};
\node at (1.5, -1.5) (10){$10$};
\node at (3, -1.5) (11){$11$};
\node at (4.5, -1.5) (12){$12$};
%\node at (6, -1.5) (13a){$1$};
\node at (-2.25, -2.25) (13b){$13$};
\node at (-0.75, -2.25) (14){$14$};
\node at (0.75, -2.25) (15){$15$};
\node at (2.25, -2.25) (16){$16$};
\node at (3.75, -2.25) (17){$17$};
\node at (5.25, -2.25) (18){$18$.};
%\node at (6.75, -2.25) (19a){$1^$};
\end{tikzpicture}
\end{center}
The initial entry in row $k$ is $S(k,0) =  \T_{k+1}-2$ where $
\T_j = { j+1 \choose 2} = \sum_{i=1}^j i$
is the $j$th triangular number. 
The final entry in row $k$ is $S(k,k+1) = \T_{k+1} - 2 + (k+1) = \T_{k+2} -3$.
\end{definition}

 This trapezoidal organization is justified by the following observation. By placing $|\mathcal{Z}_{n+1}(\pattern)| = |\maxweightset{n}|$ from equation~\eqref{eqn:count-seq} in the location of $n$ in $S(k,\ell)$, a Pascal-like recurrence appears:
\begin{equation}
\label{eqn:K-trapezoid}
\begin{tikzpicture}[scale=.8]

%\node at (-4, -1.125) {$K=$};

\node at (0,0) (1){$1$};
\node at (1.5,0) (2){$2$};
\node at (3,0) (3){$2$};
%\node at (4.5,0) (4a){$1$};
\node at (-0.75, -0.75) (4b){$ 1$};
\node at (0.75, -0.75) (5){$ 3$};
\node at (2.25, -0.75) (6){$4$};
\node at (3.75, -0.75) (7){$3$};
%\node at (5.25, -0.75) (8a){$1$};
\node at (-1.5, -1.5) (8b){$1$};
\node at (0, -1.5) (9){$4$};
\node at (1.5, -1.5) (10){$7$};
\node at (3, -1.5) (11){$7$};
\node at (4.5, -1.5) (12){$4$};
%\node at (6, -1.5) (13a){$1$};
\node at (-2.25, -2.25) (13b){$1$};
\node at (-0.75, -2.25) (14){$5$};
\node at (0.75, -2.25) (15){$11$};
\node at (2.25, -2.25) (16){$14$};
\node at (3.75, -2.25) (17){$11$};
\node at (5.25, -2.25) (18){$5.$};
%\node at (6.75, -2.25) (19a){$1^$};
\end{tikzpicture}
\end{equation}
Indeed, the proof of Theorem~\ref{thrm:correct-perm} hinges on establishing this Pascal-like recurrence for $|\maxweightset{n}|$. Our first order of business is to provide formulas for the row and column of size $n$ in trapezoid $S$.

%It will be beneficial to accurately place the location of $n$ in $S$. 
%We provide formulas for the row $\row{n}$ and column $\ind{n}$ of $n$ in size trapezoid $S$ in the following lemma.

\begin{lemma}
\label{lem:row-ind}
For a positive integer $n$, its row $\row{n}$ and column $\ind{n}$ in size trapezoid $S$ are given by
$$
\row{n} = \left\lfloor \frac{-3 + \sqrt{8n+17}}{2} \right\rfloor
\qquad
\mbox{and}
\quad
\ind{n} = n + 2 - \T_{\row{n}+1}.
$$
where the row index starts at 1 and the column index starts at 0.
\end{lemma}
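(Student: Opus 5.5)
The plan is to turn membership of $n$ in row $k$ into a pair of inequalities, and then solve those inequalities for $k$. By Definition~\ref{def:size-trapezoid}, row $k$ consists of the consecutive integers from $S(k,0)=\T_{k+1}-2$ to $S(k,k+1)=\T_{k+2}-3$. Moreover, $S(k+1,0)=\T_{k+2}-2$ is exactly one more than the last entry of row $k$, and $S(1,0)=\T_2-2=1$. So the rows partition the positive integers into consecutive intervals. Hence $n$ lies in row $k$ if and only if
\[
\T_{k+1}-2 \le n \le \T_{k+2}-3, \qquad\text{equivalently}\qquad \T_{k+1} \le n+2 < \T_{k+2}.
\]
Once $k=\row{n}$ is identified this way, the column formula is immediate: $\ind{n}=n-S(\row{n},0)=n+2-\T_{\row{n}+1}$, and it automatically lies in $[0,\row{n}+1]$.

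It remains to show that the unique $k\ge 1$ with $\T_{k+1}\le n+2<\T_{k+2}$ is the stated floor expression.
\begin{enumerate}[(i)]
\item Set $m=k+1$, so the condition reads $\T_m\le n+2<\T_{m+1}$ with $\T_m=m(m+1)/2$.
\item The inequality $m(m+1)/2\le n+2$ is equivalent to $(2m+1)^2\le 8n+17$, that is, $m\le \frac{-1+\sqrt{8n+17}}{2}$.
\item Likewise, $n+2<\T_{m+1}$ is equivalent to $(2m+3)^2>8n+17$, that is, $m+1>\frac{-1+\sqrt{8n+17}}{2}$.
\item Together these say $m=\left\lfloor \frac{-1+\sqrt{8n+17}}{2}\right\rfloor$. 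Subtracting $1$ gives $k=\left\lfloor\frac{-3+\sqrt{8n+17}}{2}\right\rfloor$.
\item For $n\ge1$ we have $\sqrt{25}=5$, so $k\ge1$, consistent with rows being indexed from $1$.
\end{enumerate}

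The only delicate point is the floor step, where the boundary cases need care. When $8n+17$ is a perfect square, equality holds in the left inequality of step (ii), and this must correctly place $n$ at column $0$. That works because the condition $\T_m\le n+2$ is non-strict while $n+2<\T_{m+1}$ is strict, so the equivalence with the floor is exact. Everything else is routine algebra.
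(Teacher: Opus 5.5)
Your proof is correct and follows the same route as the paper: you characterize row $k$ by $\T_{k+1}-2 \le n \le \T_{k+2}-3$ and then invert the triangular-number formula. You also make the floor step rigorous via the two inequalities $(2m+1)^2 \le 8n+17 < (2m+3)^2$, whereas the paper only solves $n=\T_{k+1}-2$ for $k$ and leaves the monotonicity argument implicit.
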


\begin{proof}
For $k \geq 1$, the sequence $ \{ S(k,0) \} =  \{ 1,4,8,13,\ldots \}$ of first entries is $\{ \T_{k+1}-2 \}$.
Solving for the case $n = \T_{k+1} -2$ for $k$ yields
$$
k = \frac{-3 + \sqrt{8n+17}}{2}.
$$
For general $n$, let $k$ be the unique integer such that
$\T_{k+1}-2 \leq n \leq \T_{k+2}-3$. We have $\row{n}=k$ and
$0 \leq \ind{n} \leq k+1$, which is the row and column indexing that we desire.
\end{proof}

With these parameters defined, we turn to restating our main result, using the language of jump-float sequences.  First, we resolve the sizes $n$ for which there is a unique maximum weight sequence. These are special and deserving of a definition.

\begin{definition}
\label{def:perfect}
The \emph{perfect sequence} for $n=\T_{k+1}-2$ is the maximal sequence $p \in \maxset{n}$   given by
$$
p =(1,0,\underbrace{3,3}_2,0,\underbrace{6,6,6}_3,0, \underbrace{10,10,10,10}_4,0, \ldots, 0, \underbrace{\T_k, \ldots, \T_k}_k)
$$
with weight
$$
w(p)=\sum_{i=1}^k i\T_i =\sum_{i=1}^k i \left(\frac{i(i+1)}{2}\right) = \frac{k(k+1)(k+2)(3k+1)}{24}.
$$
\end{definition}

Note that for $n=\T_{k+1}-2$, we have $\row{n}=k$ and $\ind{n}=0$. So perfect sequences correspond to the first entry of each row of the trapezoid.  We also note that the sequence of weights $\{ w(p) \}$ of perfect sequences is OEIS A001296 \cite{oeis}.

\begin{theorem}
    \label{thrm:unique}
Let $n=\T_{k+1}-2$ for some $k\geq 1$. Then $|\maxweightset{n}|=1$, and the unique maximum weight jump-float sequence is the perfect sequence for $n$
with weight $w_n^* = \sum_{i=1}^k i\T_i.$

\end{theorem}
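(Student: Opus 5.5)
Since every maximum weight sequence is maximal, we work inside $\maxset{n}$ and use the structure from Lemma~\ref{lemma:max-skip-sequence}. A maximal sequence is determined by the positions of its zeros. If its zeros sit at positions $z_1<z_2<\cdots<z_m$, then it splits into an initial run of 1's of length $z_1-1$ (possibly empty), followed by blocks. The $j$th block consists of the zero at $z_j$ and then a constant run of value $z_j+1$ of length $b_j = z_{j+1}-z_j-1 \geq 1$, with $z_{m+1}=n+1$. Condition (a) forces $b_m\geq 1$, and condition (c) forces consecutive zeros to be separated, so each $b_j\ge1$. The weight is therefore
\[
w(s) = (z_1-1) + \sum_{j=1}^m b_j (z_j+1).
\]
My first step is to reduce the optimization to a statement about block lengths. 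Treat the initial run of 1's as a degenerate block: it behaves like a zero at position $0$ followed by a run of value $1$, so we set $z_0=0$ and $b_0=z_1-1\ge 0$. Then $n+1 = \sum_{j\ge 0}(b_j+1)$, and the value of block $j$ is $1+\sum_{i<j}(b_i+1)$.

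The key step is an exchange argument showing that in any optimal sequence the run lengths satisfy $b_0 = 1$ and $b_{j+1}=b_j+1$; equivalently, the block lengths $b_j+1$ are $2,3,4,\dots$ (with the leading pattern $1,0$). I would prove this with two local moves, each of which weakly increases the weight.

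\emph{Move 1 (sorting).} If $b_j > b_{j+1}$, swapping the two runs increases the weight, because the longer run then gets the larger value. Computing: the total changes by $(b_j-b_{j+1})(b_j+1)-\dots$; this is a routine check. So run lengths can be assumed weakly increasing.

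\emph{Move 2 (splitting or merging).} If $b_{j+1}\ge b_j+2$, moving one entry from run $j+1$ into run $j$ changes the weight by $v_j - (b_j+1)\cdot 0\ldots$. More carefully, lengthening run $j$ by one shifts the zero $z_{j+1}$ right by one. This raises the value of every later block by $1$ and lowers the length of run $j+1$ by $1$. The net change is $v_j + (\text{number of later entries}) - v_{j+1}$, which is compared using $v_{j+1}=v_j+b_j+1$. Dually, if two adjacent runs have equal length, or if a run is too long relative to its value, then inserting a new zero or deleting a zero helps.

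I expect Move 2 to be the main obstacle. The boundary effects are tricky: the last block, the degenerate initial block $b_0$, and the fact that shifting a zero changes the values of \emph{all} subsequent blocks all need care. To get a clean strict inequality, I would rather compare directly. Let $s$ be any maximal sequence with $m$ zeros. For a fixed number of blocks, the weight is a quadratic function of the block lengths. When $n=\T_{k+1}-2$, the constraint $\sum (b_j+1)=n+1$ together with the strict inequality forces the lengths to be exactly $1,2,\dots,k$ (shifted), since the ideal increments are the integers $1,2,\ldots,k$ with no remainder left over. This is exactly why uniqueness holds only at $\ind{n}=0$.

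Finally, once the optimal shape is identified, it is the perfect sequence of Definition~\ref{def:perfect}: the block with run length $i$ starts at position $\T_i$ and has value $\T_i$. Its weight $\sum_{i=1}^k i\T_i$ is already computed there. Uniqueness follows because each move strictly increases the weight unless the sequence is already perfect, and in the perfect case $\ind{n}=0$ leaves no slack to distribute.
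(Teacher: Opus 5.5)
\textbf{There is a genuine gap.} Your encoding is correct: zeros at $z_j$, a virtual zero at position $0$, and $n+1=\sum_{j\ge 0}(b_j+1)$. Move~1 is also sound, since swapping two adjacent runs changes the weight by exactly $b_j-b_{j+1}$. Move~2, computed correctly, is Lemma~\ref{lem:block-delta} with $\delta=1$. Only block $j+1$ changes value (later blocks are untouched because $z_{j+2}$ does not move), and the change is $b_{j+1}-b_j-2$.

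The gap is that both moves preserve the number of zeros, and Move~2 is weight-neutral when $b_{j+1}=b_j+2$. So your closing claim that ``each move strictly increases the weight unless the sequence is already perfect'' is false. For $n=4$, the maximal sequence $(0,2,0,4)$ has $b_0=0$, $b_1=b_2=1$ in your encoding. It admits neither move, yet its weight is $6<7$.

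What must actually be proved is that a maximum-weight sequence for $n=\T_{k+1}-2$ has exactly $k-1$ zeros and $s_1=1$. Your split/merge moves for changing the number of zeros are only gestured at. This is exactly where the paper does its work:
\begin{itemize}
\item Proposition~\ref{prop:block_index} (from the alternation result, Proposition~\ref{prop:alternate}) together with Lemma~\ref{lem:alg} forces exactly $k$ blocks (Corollary~\ref{cor:perfect_blocks}).
\item With no leading gap this gives $\sum e_i=0$, and alternation then forces every $e_i=0$.
\item A leading gap is excluded by the weight-preserving pivots $(0,2,2,\dots)\mapsto(1,0,3,\dots)$ and $(0,2,0,\dots)\mapsto(1,1,0,\dots)$, whose outputs contradict what was already established.
\end{itemize}

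Your fallback (``for a fixed number of blocks the weight is quadratic \dots\ no remainder'') could become a genuinely different proof, but as written it is only an assertion. Completing the square makes it precise. If $s$ has $m$ zeros, put $N=n+1$, $c_i=b_i+1$ and $d_i=c_i+m-i$ for $0\le i\le m$. Then
\[
w(s)=N-(m+1)+\frac{N^2}{2}+\frac12\sum_{i=0}^{m}(m-i)^2-\frac12\sum_{i=0}^{m}d_i^2,
\qquad
\sum_{i=0}^{m}d_i=N+\binom{m+1}{2}.
\]
For $m=k-1$ the $d_i$ sum to $k(k+1)$ over $k$ terms. So among sequences with $k-1$ zeros, the maximum is attained only at $d_i\equiv k+1$, i.e.\ $b_i=i+1$, which is the perfect sequence.

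You would still have to show that for every $m\neq k-1$ the maximum is strictly below $w(p)$, even after dropping the constraints $c_0\ge1$, $c_i\ge 2$. The margin is thin: for $m=k$ the bound is exactly $w(p)-1$, and for $k=3$ it is attained by $(0,2,0,4,0,6,6,6)$. Your sketch never makes this comparison over all $m$, and without it neither optimality nor uniqueness of the perfect sequence follows.
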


The general case is covered by the following theorem.

\begin{theorem}
    \label{thrm:correct}
    For $n \geq 1$, the number of maximum weight jump-float sequences is
    \[
    |\maxweightset{n}|=\binom{\row{n}}{\ind{n}}+\binom{\row{n}+1}{\row{n}+2-\ind{n}}
    =
    \binom{\row{n}}{\ind{n}}+\binom{\row{n}+1}{\ind{n}-1}
    \]
    with weight
    \[
    \maxweight{n}=\T_{n-\row{n}} + \sum_{i=1}^{\row{n}}  \T_i
    = \sum_{j=1}^{n-\row{n}} j + \sum_{i=1}^{\row{n}}  \T_i,
    \]
    where $\T_i = {i+1 \choose 2}$ is the $i$th triangular number, and  $\rho_n$ and $\lambda_n$ are defined in Lemma~\ref{lem:row-ind}.
\end{theorem}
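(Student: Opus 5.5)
We reduce to maximal sequences, encode each by its block structure, and then do the optimisation and counting.

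\textbf{Block encoding.} Since $w$ is monotone in $\preceq$, every maximum weight sequence lies in $\maxset{n}$. By Lemma~\ref{lemma:max-skip-sequence}, a maximal sequence is determined by the positions of its zeros. After each zero at position $j-1$ comes a block $(j,j,\ldots,j)$ of some length $b\ge 1$ with constant value $j$. The sequence may also begin with an initial run of ones of length $b_0\ge 0$; if $b_0\ge 1$, this run is followed by a zero or ends the sequence.

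We therefore encode $s\in\maxset{n}$ by its block lengths $(b_0;b_1,\ldots,b_m)$, with $b_0+\sum_{i\ge1}(b_i+1)=n$. If block $i$ starts at position $j_i$, its value is $j_i$, so
\[
w(s)=b_0+\sum_{i=1}^m b_i j_i,
\]
and $j_{i+1}=j_i+b_i+1$.

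\textbf{Exchange arguments.} These constrain the optimal block lengths. First, compare lengthening block $i$ by one with shortening block $i+1$ by one. The value of block $i+1$ shifts by $+1$, so the change in weight is $j_i - j_{i+1} + b_{i+1}-1$. Optimality in both directions should force $b_{i+1}\in\{b_i+1,b_i+2\}$. More precisely, the block lengths should increase by exactly $1$ at each step, except for a bounded amount of slack located at the last block or at the start.

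Second, compare merging the last block into the previous one with splitting off a new block. This should show that the final block has length $\ge$ (number of blocks) $-O(1)$.

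Together these identify the optimal shapes. The perfect sequence has lengths $1,2,\ldots,k$ with a zero separating consecutive blocks. For $n=\T_{k+1}-2+\lambda$, an optimal sequence is obtained from the perfect one by distributing $\lambda$ extra unit lengthenings: each block may be lengthened by at most one, and the final block absorbs either nothing or an extra.

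\textbf{Theorem~\ref{thrm:unique}.} For $n=\T_{k+1}-2$ the exchange inequalities are tight only for the perfect sequence, which gives uniqueness and the stated weight.

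\textbf{Recurrence for the count.} Given $n$ with $\row{n}=k$ and $\ind{n}=\lambda$, I would split $\maxweightset{n}$ according to whether the last block is lengthened. Removing the last entry maps the first type bijectively onto $\maxweightset{n-1}$ restricted appropriately. Removing the last block together with its preceding zero maps the second type to sizes in row $k-1$. This should yield the Pascal-like recurrence of \eqref{eqn:K-trapezoid}: the count at $(k,\lambda)$ is the sum of the counts at positions $(k-1,\lambda-1)$ and $(k-1,\lambda)$, suitably interpreted.

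The binomial formula then follows by induction, since $\binom{k}{\lambda}+\binom{k+1}{\lambda-1}$ satisfies the same recurrence and boundary values. Combinatorially, the first term counts choices of which $\lambda$ of the $k$ blocks get lengthened with the last block not extra, and the second term counts the other configuration.

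\textbf{Weight formula.} The perfect weight $\sum_{i\le k} i\T_i$ plus the gain from the $\lambda$ lengthenings should simplify to $\T_{n-k}+\sum_{i\le k}\T_i$. I would check this by induction on $n$: moving from $n-1$ to $n$ increases the weight by $n-k$ within a row.

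\textbf{Main obstacle.} The hardest step is the exchange analysis that pins down exactly which block-length profiles are optimal, including the edge effects from the initial ones-run and the final block. It must be done sharply enough to get an exact characterization rather than just the optimal weight. I would first verify the characterization by hand against the trapezoid \eqref{eqn:K-trapezoid} for $n\le 12$, and then make the local-move argument rigorous.
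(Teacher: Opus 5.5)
There is a genuine gap, and it lies in the step you flag yourself. The characterization of the optimal block profiles is never proved, and the version you sketch is false in a way that loses a whole term of the count.

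Your exchange computation is correct, but its two directions give $b_{i+1}-b_i-2\le 0$ and $-(b_{i+1}-b_i)\le 0$, i.e.\ $0\le b_{i+1}-b_i\le 2$. They do not give $b_{i+1}\in\{b_i+1,b_i+2\}$. Equal adjacent blocks do occur in optimal sequences, e.g.\ $(1,1,0,4,4)\in\maxweightset{5}$ with blocks $(2,2)$. The deviations from $(1,2,\ldots,k)$ are also not confined to the start or the last block: $(1,1,0,4,4,0,7,7,7,7)$, with blocks $(2,2,4)$, attains the maximum weight $38$ at $n=10$.

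More seriously, lengthening $\lambda$ of the $k$ blocks of the perfect sequence for $\T_{k+1}-2$ produces only the $\binom{k}{\lambda}$ sequences of the first term. You miss the entire family of $\binom{k+1}{\lambda-1}$ optimal sequences with $k+1$ blocks. These arise by shortening $k+2-\lambda$ blocks of the perfect sequence for $\T_{k+2}-2$ by one each; shortening its first block to width $0$ creates a leading gap. At $n=3$ your recipe produces neither element of $\maxweightset{3}=\{(0,2,2),(1,0,3)\}$. At $n=10$ it misses $(1,0,3,0,5,5,0,8,8,8)$ and the three optimal sequences with a leading gap. So your combinatorial reading of the second binomial has nothing to count.

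The fact that really drives the recurrence is that the last block has length exactly $\row{n}$ or $\row{n}+1$. Your bound ``at least the number of blocks minus $O(1)$'' is much too weak. Only with this dichotomy do your two removal maps exhaust $\maxweightset{n}$, and only then can you verify that extending every optimal shorter sequence gives an optimal one. The paper removes the last block with its gap in both cases, landing at sizes $n-\row{n}-1$ and $n-\row{n}-2$, and checks that the two extensions have equal weight.

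The paper earns the dichotomy in stages:
\begin{itemize}
\item Two equal adjacent blocks can be traded at zero cost, so no three consecutive blocks are equal, and the steps $0$ and $2$ in $b_{i+1}-b_i$ must alternate.
\item Hence $b_i\in\{i-1,i,i+1\}$, which together with $n=\gamma+\sum b_i$ confines the number of blocks to $\row{n}$ or $\row{n}+1$.
\item Finally two boundary profiles are ruled out.
\end{itemize}
One of these profiles, blocks $(1,2,\ldots,k-1,k-1)$ with a leading gap at $n=\T_{k+1}-2$, admits no improving adjacent exchange. It is excluded by a non-local move: delete the bottom row of the triangle and extend the last block, a net gain of $1$. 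For $k=3$ this is $(0,2,0,4,4,0,7,7)$, of weight $24<25$.

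This example also shows that your uniqueness argument for perfect $n$ (``the exchange inequalities are tight only for the perfect sequence'') does not work. The paper additionally needs the weight-preserving pivots $(0,2,2,0,\ldots)\to(1,0,3,0,\ldots)$ and $(0,2,0,\ldots)\to(1,1,0,\ldots)$ to dispose of leading gaps.

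Your inductive weight argument inherits the same gap. The within-row increment $n-\row{n}$ is known to be attained, and not exceeded, only once the dichotomy is available.
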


Note that Theorem~\ref{thrm:correct-perm} is equivalent to Theorem~\ref{thrm:correct}, thanks to Lemma~\ref{lem:jump-float-bijection}. As such, the remainder of the paper is devoted to proving Theorems~\ref{thrm:unique} and~\ref{thrm:correct}.

%% file: max-weight.tex
\section{Maximum Weight Jump-Float Sequences}
\label{sec:max-weight}

In Section~\ref{sec:jump-float}, we pivoted our attention from $\pattern$-avoiding permutations to jump-float sequences, which more obviously showcase the inversion number of the permutation. Our goal for this section is to characterize the highly structured forms of the maximum weight jump-float sequences. 

To this end, we define a simple, equivalent formulation called a \emph{block sequence} in Section~\ref{sec:block-seq}, which encodes the lengths of nonzero subsequences of a jump-float sequence. In Section~\ref{sec:max-block-seq}, we prove a strong restriction on the values in the block sequence, showing that the entries cannot differ too much from their indices in the block sequence. 

Finally, in Section~\ref{sec:max-block-row}, we use these conditions on the block sequence to prove restrictions on the structure of the jump-float sequence. Specifically, if $n$ is in row $\row{n}$ of size trapezoid $S$, both the length of and the last entry of the block sequence is either $\row{n}$ or $\row{n}+1$.

% In Section~\ref{sec:jump-float}, we pivoted our attention from $\pattern$-avoiding permutations to maximal jump-float sequences $\maxset{n}$ and maximum weight jump-float sequences $\maxweightset{n}$. Before we can prove our main results, we must further understand the nature sequences in $\maxset{n}$ and $\maxweightset{n}$.
% In Section~\ref{sec:block-seq}, we take advantage of the simple structure of maximal jump-float sequences and define an equivalent formulation called block sequences. 
% In Section~\ref{sec:max-block-seq}, we prove a strong restriction on the entries of maximum weight block sequences.
% In Section~\ref{sec:max-block-row}, we show that the row $\row{n}$ of size $n$ in trapezoid $S$ of Definition~\ref{def:size-trapezoid} narrows down the length of a maximum weight block sequence, as well as the size of its final entry.

\subsection{Block sequences}
\label{sec:block-seq}

Lemma~\ref{lemma:max-skip-sequence} shows that  maximal sequences in $\maxset{n}$ have a very specific structure. Each maximal sequence $s=(s_1, \ldots, s_n)$
can be decomposed into constant subsequences, separated by single zero entries. The first entry $s_1$ can be zero, but the last entry $s_n$ cannot. Also, the value of a constant subsequence is equal to the index where the subsequence begins. Given this constrained structure, we can  efficiently represent a maximal sequence by listing the lengths of its constant subsequences.

\begin{figure}[h!]
    \centering
\begin{tikzpicture}[scale=.6]
        \tri[(0, 0)]{0,2,0,0,0}{violet!50};
        \tri[(0, 0)]{0,0,0,4,4}{teal!50};
        \node at (-.2,-.7) {\small $ b=(\textcolor{violet}{1},\textcolor{teal}{2})$};

        \tri[(3, 0)]{0,2,2,0,0}{violet!50};
        \tri[(3, 0)]{0,0,0,0,5}{teal!50};
        \node at (2.8,-.7) {\small $b=(\textcolor{violet}{2},\textcolor{teal}{1})$};

        \tri[(6, 0)]{0,2,2,2,2}{violet!50};
        \node at (5.9,-.7) {\small $b=(\textcolor{violet}{4})$};
        
        \tri[(9, 0)]{1,0,0,0,0}{violet!50};
        \tri[(9, 0)]{0,0,3,0,0}{teal!50};
        \tri[(9, 0)]{0,0,0,0,5}{blue!50};
        \node at (8.9,-.7) {\small $b=(\textcolor{violet}{1},\textcolor{teal}{1},\textcolor{blue}{1})$};
        
        \tri[(12, 0)]{1,0,0,0,0}{violet!50};
        \tri[(12, 0)]{0,0,3,3,3}{teal!50};
        \node at (11.9,-.7) {\small $b=(\textcolor{violet}{1},\textcolor{teal}{3})$};

        \tri[(15, 0)]{1,1,0,0,0}{violet!50};
        \tri[(15, 0)]{0,0,0,4,4}{teal!50};
        \node at (14.9,-.7) {\small $b=(\textcolor{violet}{2},\textcolor{teal}{2})$};

        \tri[(18, 0)]{1,1,1,0,0}{violet!50};
        \tri[(18, 0)]{0,0,0,0,5}{teal!50};
        \node at (17.9,-.7) {\small $b=(\textcolor{violet}{3},\textcolor{teal}{1})$};
        
        \tri[(21, 0)]{1,1,1,1,1}{violet!50};
        \node at (20.9,-.7) {\small $b=(\textcolor{violet}{5})$};
\end{tikzpicture}
    \caption{The triangle representations of the maximal sequences from $\maxset{5}$ and their corresponding block sequences from $\blockset{5}$. }
\label{fig:maxset-5}
\end{figure}

\begin{definition}
\label{def:block-seq}
A \emph{block} of $s \in \maxset{n}$ is a maximal nonzero subsequence of $s$. The length of this subsequence is the \emph{width} of the block, while the nonzero value of the subsequence is the \emph{height} of the block. Distinct blocks are separated by a single zero, called a \emph{gap}. If $s_1=0$, then it is called a \emph{leading gap}.
Given $s \in \maxset{n}$, its  \emph{block sequence} $b=b(s)=(b_1,\ldots b_k)$ is the list of widths of the blocks of $s$. We write $|b|=k$ and for convenience, we define its weight $w(b)=w(s)$. The set of \emph{maximal block sequences} is
$$
\blockset{n} = \{ b(s) : s \in \maxset{n} \}
$$
and the set of \emph{maximum weight block sequences} is
$$
\mwblockset{n} = \{ b(s) : s \in \maxweightset{n} \}.
$$
\end{definition}

Figure~\ref{fig:maxset-5} shows the block sequences $\blockset{5}$ for sequences of $\maxset{5}$, as well as their triangular representations, where each block is a rectangle of filled squares. We have
$\mwblockset{5} = \{ (1,3), (2,2) \}$, and these block sequences have weight 10.

Looking at block sequences will allow us to say even more about the structure of maximum weight sequences. Therefore, we shift our focus to $\blockset{n}$ and $\mwblockset{n}$, rather than $\maxset{n}$ and $\maxweightset{n}$. Our next two lemmas show that we can recover a maximal sequence $s \in \maxset{n}$ from its block sequence $b \in \blockset{n}$.

%  For example, the  maximal sequences
% $$
% (1,1,0,4,0,6,6,6,0,10,10,10) \quad \mbox{and} \quad
% (0,2,2,2,0,6,6,0,9,9,0,12)
% $$
% from $\maxset{12}$ have block sequences 
% $$
% (2,1,3,3) \quad \mbox{and} \quad (3, 2, 2, 1).
% $$

\begin{lemma}
\label{lem:alg}
    If $b=(b_1, b_2, \ldots, b_k) \in \blockset{n}$ then $n = \gamma + \sum_{i=1}^k b_i$ where $\gamma \in \{ k-1, k \}$ is the number of gaps in the corresponding maximal sequence.
\end{lemma}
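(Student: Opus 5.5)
The argument is a direct count of the positions of $s$. Let $s \in \maxset{n}$ have block sequence $b=(b_1,\ldots,b_k)$. By Lemma~\ref{lemma:max-skip-sequence}, every index $i$ with $1 \le i \le n$ has either $s_i>0$ or $s_i=0$. The indices with $s_i>0$ are partitioned into the blocks, that is, the maximal nonzero runs, and by Definition~\ref{def:block-seq} these runs have widths $b_1,\ldots,b_k$. So the number of nonzero entries of $s$ is $\sum_{i=1}^k b_i$, and the number of zero entries is $\gamma := n - \sum_{i=1}^k b_i$. It remains to show that $\gamma \in \{k-1,k\}$.

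The key step is to show that no two zeros of $s$ are adjacent. Suppose $s_{i-1}=0$ for some $2\le i\le n$. Then condition (c) of Lemma~\ref{lemma:max-skip-sequence} forces $s_i=i>0$. In addition, condition (a) gives $s_n>0$, so the last entry is never a zero.

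Consequently the zeros are isolated, and each zero except possibly $s_1$ is immediately preceded by a nonzero entry. Such a zero therefore immediately follows the end of some block. Distinct zeros follow distinct blocks, since a block ends at exactly one position. Conversely, every block other than the last is followed by a zero: a block is a maximal nonzero run that is not the final run, and since $s_n>0$ the final run ends at $n$. This gives a bijection between the non-leading zeros and the first $k-1$ blocks, so there are exactly $k-1$ of them.

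Finally, we add the possible leading gap. If $s_1=0$, then $\gamma=k$; if $s_1>0$, then $\gamma=k-1$. Either way, $n=\gamma+\sum_{i=1}^k b_i$ with $\gamma\in\{k-1,k\}$, and $\gamma$ equals the number of gaps.

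The only step needing care is the bijection between non-leading zeros and blocks. The point to verify is that a zero cannot occur at position $1$ while also being counted as following a block; this is automatic, because position $1$ has no predecessor. The rest is bookkeeping.
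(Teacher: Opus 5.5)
Your proof is correct and follows essentially the same route as the paper: count the zero entries as $n-\sum_i b_i$, show no two zeros are adjacent, and conclude that the zeros are the $k-1$ separating gaps plus an optional leading gap. The differences are minor. You get the no-adjacent-zeros fact from condition (c) of Lemma~\ref{lemma:max-skip-sequence}, whereas the paper argues directly that raising the first of two consecutive zeros to $1$ would contradict maximality. You also state explicitly that $s_n>0$ rules out a trailing zero, which the paper leaves implicit.
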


\begin{proof}
Let $s \in \maxset{n}$ be the maximal sequence for block sequence $b \in \maxset{n}$. It is clear that $s$ cannot have a subsequence of multiple zeros: otherwise we could increase the first of these entries to 1 to create a valid jump-float sequence $s' \succ s$.

The difference $n - \sum_{i=1}^k b_i$ is the number of zero entries $s$. There are $k-1$ gaps between the blocks. It is also possible to have an additional leading gap before the first block. 
%So the total number of gaps is either $k-1$ or $k$.
\end{proof}

\begin{lemma}
\label{lem:height-seq}
Given $b = (b_1, b_2, \ldots, b_k) \in \blockset{n}$, we can determine its corresponding $s \in \maxset{n}$. In particular, if $h_i$ is the height of block $i$, then
$h_1 \in \{1,2\}$ and $h_i = h_{i-1} + b_i + 1$ for $2 \leq i \leq k$.
\end{lemma}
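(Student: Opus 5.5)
The plan is to read everything off the structural description of maximal sequences in Lemma~\ref{lemma:max-skip-sequence}. Let $s \in \maxset{n}$ have block sequence $b=(b_1,\ldots,b_k)$. The first step is to split into two cases according to whether $s_1 = 0$ (a leading gap) or $s_1 > 0$.

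\emph{Case $s_1>0$.} Condition (a) of Definition~\ref{def:jf-sequence} gives $s_1 \le 1$, so $s_1 = 1$. Condition (b) of Lemma~\ref{lemma:max-skip-sequence} then forces the first block to be constant with value $s_1$, so $h_1 = 1$.

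\emph{Case $s_1=0$.} Condition (c) of Lemma~\ref{lemma:max-skip-sequence} gives $s_2 = 2$. Again by condition (b), the first block has height $h_1 = 2$.

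In both cases $h_1 \in \{1,2\}$.

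For $i \ge 2$, I will track starting indices. Let block $i-1$ start at index $j$ and have width $b_{i-1}$, so it occupies positions $j,\ldots,j+b_{i-1}-1$. By Lemma~\ref{lem:alg} (and the observation in its proof that there are no consecutive zeros), exactly one gap follows, at position $j+b_{i-1}$. Block $i$ therefore starts at position $j+b_{i-1}+1$. Condition (c) makes the entry right after a zero equal to its index, so the height of block $i$ is its starting index: $h_i = j + b_{i-1} + 1$. Since block $i-1$ also began right after a gap (or at position $1$ or $2$ in the base case), its height equals its starting index, $h_{i-1} = j$. This gives the recurrence $h_i = h_{i-1} + b_{i-1} + 1$.

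The stated formula reads $h_i = h_{i-1} + b_i + 1$. This looks like either an indexing slip or a convention I am misreading. The consistency check I would run is $b=(1,3)$, i.e. $s=(1,0,3,3,3)$. Here $h_2 = 3 = h_1 + b_1 + 1$, whereas $h_1 + b_2 + 1 = 5$. So I would prove the recurrence with $b_{i-1}$, and note that this is what is actually needed.

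The main obstacle is that the recurrence needs $h_1$, and $h_1$ depends on whether there is a leading gap, which $b$ alone does not record. I would resolve this with Lemma~\ref{lem:alg}: since $n = \gamma + \sum b_i$ and $n$ is known, $\gamma = n - \sum b_i$ is determined. Then $\gamma = k$ means there is a leading gap, so $h_1 = 2$, and $\gamma = k-1$ means there is none, so $h_1 = 1$.

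With the gap pattern and all heights determined, $s$ is reconstructed explicitly: an optional leading zero, then $b_1$ copies of $h_1$, a zero, $b_2$ copies of $h_2$, and so on. This proves that $b \mapsto s$ is well defined and injective on $\blockset{n}$.
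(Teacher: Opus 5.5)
Your proof is correct and follows the paper's argument: use Lemma~\ref{lem:alg} to compute $\gamma = n - \sum_i b_i$ and decide whether there is a leading gap (so $h_1 \in \{1,2\}$), then use the fact that each block's height equals its starting index to get $h_i = h_{i-1} + b_{i-1} + 1$. You are also right that $b_i$ in the statement is a typo for $b_{i-1}$: your example $b=(1,3)$ shows the literal statement fails, while the paper's own proof uses $b_{i-1}$, and so does Lemma~\ref{lem:block-delta}, where $h'_{i+1} = 1 + b'_i + h'_i$.
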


\begin{proof}
Let $b = (b_1, b_2, \ldots, b_k) \in \blockset{n}$, and set $\gamma = n - \sum_{i=1}^k b_i$.  Observe that the height $h_i$ of the $i$th block \emph{also} equals the index where the block starts.

We recursively determine the heights $h_1, h_2, \ldots, h_k$ of the blocks.
When $\gamma=k-1$, the first block starts at index $h_1=1$. Otherwise, $\gamma=k$ and the first block starts at index $h_1=2$. For $2 \leq i \leq k$, block $i$ starts at $h_i = h_{i-1} + b_{i-1} + 1$. 
This completely determines all of the values of the corresponding $s \in \maxset{n}$.
\end{proof}

For example, consider block sequence $b = (4,1,2) \in \blockset{10}$. We have $k=3$ and $\gamma=10-7=3,$ so there is a leading gap. Then $h_1=2, h_2=7$, and $h_3=9$. The corresponding maximal jump-float sequence is $s=(0,2,2,2,2,0,7,0,9,9)$, and $w(b)=w(s)=8+7+18=33$.

As another example, recall that the perfect sequences (Definition~\ref{def:perfect}) are a particularly important family of maximal sequences. For $n=\T_{k+1}-2$, the perfect sequence
$$
p =(1,0,\underbrace{3,3}_2,0,\underbrace{6,6,6}_3,0, \underbrace{10,10,10,10}_4,0, \ldots, 0, \underbrace{\T_k, \ldots, \T_k}_k).
$$
has block sequence $b=(1,2,3,\ldots,k)$ and has $k-1$ gaps. 
The first three perfect sequences are shown in Figure~\ref{fig:unique_ex}. 

\begin{figure}[h!]
    \centering
\begin{tikzpicture}[scale=.6]
        \tri[(-6, -0.5)]{1}{violet!50};
        \node (eq) at (-6.5,-2) {\setlength{\arraycolsep}{1pt} \renewcommand{\arraystretch}{1.2}$\begin{array}{rcl} s &=&(\textcolor{violet}{1}) \\ b(s)&=&(\textcolor{violet}{1}) \\ \phi(s)&=&21  \end{array}$};

        \tri[(0,-0.5)]{1,0,0,0}{violet!50};
        \tri[(0,-0.5)]{0,0,3,3}{teal!50};
        \node (eq) at (0,-2) {\setlength{\arraycolsep}{1pt} \renewcommand{\arraystretch}{1.2}$\begin{array}{rcl} s &=&(\textcolor{violet}{1},0,\textcolor{teal}{3,3}) \\ b(s)&=&(\textcolor{violet}{1},\textcolor{teal}{2}) \\ \phi(s)&=&45132  \end{array}$};

        \tri[(8,-0.5)]{1,0,0,0,0,0,0,0}{violet!50};
        \tri[(8,-0.5)]{0,0,3,3,0,0,0,0}{teal!50};
        \tri[(8,-0.5)]{0,0,0,0,0,6,6,6}{blue!50};
        \node (eq) at (8,-2) {\setlength{\arraycolsep}{1pt} \renewcommand{\arraystretch}{1.2} $\begin{array}{rcl} s &=&(\textcolor{violet}{1},0,\textcolor{teal}{3,3},0,\textcolor{blue}{6,6,6}) \\ b(s)&=&(\textcolor{violet}{1},\textcolor{teal}{2},\textcolor{blue}{3}) \\ \phi(s)&=&789156243  \end{array}$};

        % \tri[(15,-0.5)]{1,0,3,3,0,6,6,6,0,10,10,10,10}{violet!50};
        % \node (s3) at (15,-1) {$s=(1,0,3,3,0,6,6,6,0,10,10,10,10)$};
        % \node (b3) at (15,-1.75) {$b=(1,2,3,4)$};        
  
\end{tikzpicture}
    \caption{The perfect sequences for $n=1,4,8$, each shown with its triangle representation, block sequence, and permutation.}
    \label{fig:unique_ex}
\end{figure}

\subsection{Restricting the entries of maximum weight block sequences}
\label{sec:max-block-seq}

The block sequence provides a convenient way to describe the lengths of nonzero subsequences of a maximum weight jump-float sequence. It turns out that the requirement of maximum weight leads to the entries of the block sequence being highly restricted. The goal of this subsection is to prove the following proposition.

\begin{prop}
    \label{prop:block_index}
    If $b=(b_1,\ldots b_k) \in \mwblockset{n}$ is a maximum weight block sequence then $b_i=i+e_i$, where $e_i \in \{-1,0,1\}$. 
\end{prop}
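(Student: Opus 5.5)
The plan is to compare a maximum weight block sequence with nearby block sequences in $\blockset{n}$, using \emph{local moves} that keep the length $n$ fixed. Lemma~\ref{lem:height-seq} lets us write the weight explicitly as
$$w(b)=\sum_{i=1}^k h_i b_i, \qquad h_i = h_1+\sum_{j<i}(b_j+1), \quad h_1\in\{1,2\}.$$
Each move changes $w$ by an amount computable from $b$ and $h$ alone. Maximality forces every such change to be $\le 0$, and these inequalities should squeeze $b_i$ into $\{i-1,i,i+1\}$.

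\textbf{Step 1: transfers between adjacent blocks.} Move one cell from block $i$ to block $i+1$. Block $i+1$ then starts one index earlier, and the later blocks are unaffected. The change in weight is
$$-h_i+h_{i+1}-b_{i+1}-1 = b_i-b_{i+1}.$$
Hence $b_{i+1}\ge b_i$ whenever $b_i\ge 2$. Moving one cell the other way changes the weight by $b_{i+1}-b_i-2$, so $b_{i+1}\le b_i+2$ whenever $b_{i+1}\ge 2$. Consequently consecutive differences $b_{i+1}-b_i$ lie in $\{0,1,2\}$, apart from boundary cases with width-one blocks, which need separate checking.

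\textbf{Step 2: merges and splits.} Merging blocks $i$ and $i+1$ uses up the gap between them, and the change in weight is $h_i-(b_i+1)b_{i+1}$. Splitting block $i$ into widths $x,y$ with $x+y=b_i-1$ (inserting a gap) changes the weight by $(x+1)y-h_i$. So a maximizer satisfies
$$h_i\le (b_i+1)b_{i+1} \quad\text{and}\quad (x+1)y\le h_i \text{ for all such splits}.$$
The first inequality keeps blocks from being too small relative to the current height. The second keeps them from being too large: with $x\approx y$ it gives roughly $b_i^2/4\lesssim h_i$. I would also handle the first block directly, showing $b_1\in\{1,2\}$ with a small case check; this uses the choice $h_1\in\{1,2\}$, i.e.\ whether there is a leading gap.

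\textbf{Step 3: induction on $i$.} Assume $b_j=j+e_j$ with $e_j\in\{-1,0,1\}$ for all $j<i$. Then $h_i=\T_{i}+O(i)$ is known up to a controlled error. Step 1 gives $b_i\in\{b_{i-1},b_{i-1}+1,b_{i-1}+2\}$, so only the cases $b_{i-1}=i$ with $b_i=i+2$, and $b_{i-1}=i-2$ with $b_i=i-2$, need to be excluded. For the first case, plan to apply an uneven split of block $i$ together with a transfer to block $i+1$ (or a split of the last block if $i=k$), and check that $(x+1)y>h_i$ for a suitable choice. For the second, use the merge inequality, or a transfer from the next block combined with the lower bound on $h_{i+1}$.

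The main obstacle is this last step. The crude inequalities from single moves are not tight enough on their own, since $b_i^2/4\le h_i\approx i^2/2$ only gives $b_i\lesssim\sqrt2\, i$. The drift $\sum_{j<i} e_j$ accumulates into $h_i$, so the induction must track $h_i$ precisely rather than just bound it. If a single move is insufficient, I would try composite moves that keep $k$ fixed. One is shifting one cell from block $i$ to the last block, which changes the weight by $\sum_{j=i+1}^{k} b_j$ minus terms involving $h$. Another is swapping the positions of two adjacent blocks.
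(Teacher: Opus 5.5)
Your local computations are correct: the adjacent transfers change the weight by $b_i-b_{i+1}$ and $b_{i+1}-b_i-2$, the merge and split formulas check out, and $b_1\in\{1,2\}$ is true. But the proposal stops where the proof has to happen. Step~3 names the two configurations to exclude and gives no argument that excludes them, and the moves you list cannot exclude them.

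Take $b=(1,3,4,6)$ with no leading gap, so $n=17$, the heights are $(1,3,7,12)$ and the weight is $110$. Here $b_4=4+2$ violates the statement, yet every move you propose is non-improving:
\begin{itemize}
\item the consecutive differences are $2,1,2$, so no adjacent transfer gains;
\item each merge has $h_i<(b_i+1)b_{i+1}$;
\item the best splits give $(x+1)y=2,4,9$ against $h_2,h_3,h_4=3,7,12$;
\item moving a cell from block $2$ or $3$ to the last block gives weight $108$;
\item adjacent swaps give at most $109$.
\end{itemize}
Still, $b$ is not a maximizer, since $(2,3,4,5)$ has weight $111=\maxweight{17}$. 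So no sharpening of the split bound on the offending block can close your induction. The obstruction involves blocks far from the offending one.

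The missing idea is a comparison between distant blocks. The paper gets it from the \emph{equality} cases of your Step~1. When $b_{i+1}-b_i\in\{0,2\}$, an adjacent transfer is weight-neutral and so produces another maximizer. Chaining such moves (Corollaries~\ref{cor:block_shift} and~\ref{cor:no_three_equal}, Proposition~\ref{prop:alternate}) forces the non-unit steps of $b$ to alternate between $0$ and $+2$, and counting steps from $b_1$ then pins down $b_i-i$. In your example, $(1,3,4,6)\to(1,3,5,5)\to(1,4,4,5)$ at constant weight, and then a transfer into block~$1$ gains $1$.

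Within your own framework there is a shorter route. Moving one cell from block $j$ to block $i<j$ raises $h_{i+1},\dots,h_j$ by one, and the heights telescope away, so the weight changes by exactly
\[
(b_j-j)-(b_i-i)-1 .
\]
The reverse move changes it by $(b_i-i)-(b_j-j)-1$. So you never need to track $h_i$: in a maximizer the values $e_r=b_r-r$ lie in an interval of length~$1$, except where the would-be donor has width one.

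With $b_1\in\{1,2\}$, what is left is the front of the sequence. That means the case $e_1=1$ with some $e_r=2$, and width-one donors among the first two blocks. These need weight-neutral pivots that change the leading gap or the number of blocks, such as $(1,1,0,\dots)\leftrightarrow(0,2,0,\dots)$ and $(0,2,2,0,\dots)\leftrightarrow(1,0,3,0,\dots)$. Your instinct to treat the first block separately is therefore sound. Moves that preserve the number of blocks and the leading gap cannot exclude, say, $b=(2,4)$. For the same reason, the paper's step count in its Case~2 tacitly needs $b_1=1$.
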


In words, the entry of the block sequence cannot be all that different from its index in the sequence. The proof of this proposition will require several structural results. First, we prove a general lemma about how the weight changes when the sizes of successive blocks of $b \in \blockset{n}$ are changed.

%The lemma considers the net change in weight of a maximal sequence when its block sequence $b=(b_1, b_2, \ldots, b_k)$ is changed slightly.

\begin{lemma}
\label{lem:block-delta}
Let  $b=(b_1, \ldots, b_k) \in \blockset{n}$. Let $b'=(b_1', \ldots, b_k') \in \blockset{n}$ where 
\[b_j' = \left\{ 
\begin{array}{ll}
b_{i} +\delta & \mbox{for } j = i, \\
b_{i+1} -\delta & \mbox{for } j = i+1, \\
b_j & \mbox{otherwise,}
\end{array}
\right.\]
for $1 \leq \delta < b_{i+1}$. Then the change in weight is
\[
w(b') - w(b) = \delta( b_{i+1} - b_{i} -1 - \delta).
\]
\end{lemma}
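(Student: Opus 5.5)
Since $b$ and $b'$ lie in $\blockset{n}$ and have the same number of blocks $k$ and the same total width, Lemma~\ref{lem:alg} shows that they have the same number of gaps $\gamma$. Lemma~\ref{lem:height-seq} then says they have the same first height $h_1$. I will work with the heights explicitly. The weight of a block sequence is
\[
w(b)=\sum_{j=1}^k b_j h_j, \qquad h_1\in\{1,2\},\quad h_j=h_{j-1}+b_{j-1}+1 .
\]
Equivalently, $h_j = h_1 + \sum_{m<j}(b_m+1)$: the height of a block is its starting index, which is the start of the first block plus the widths and gaps before it. (The recursion as printed in Lemma~\ref{lem:height-seq} uses $b_i$ in place of $b_{i-1}$, but its proof and the examples use $b_{i-1}$, and I will use that form.)

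Next I compare heights. For $j\le i$ the height $h_j$ depends only on $b_1,\dots,b_{j-1}$, which are unchanged, so $h_j'=h_j$. In particular $h_i'=h_i$. For block $i+1$ we get $h_{i+1}'=h_i+b_i+\delta+1=h_{i+1}+\delta$. For $j\ge i+2$ the prefix sum $\sum_{m<j} b_m$ is unchanged, because the $+\delta$ and $-\delta$ cancel, so $h_j'=h_j$. Consequently only the terms $j=i$ and $j=i+1$ of the weight change.

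The remaining step is a direct computation using $h_{i+1}=h_i+b_i+1$:
\[
w(b')-w(b) = \delta h_i + (b_{i+1}-\delta)(h_{i+1}+\delta) - b_{i+1}h_{i+1} = \delta h_i - \delta h_{i+1} + \delta b_{i+1} - \delta^2 .
\]
Substituting $h_i-h_{i+1}=-(b_i+1)$ gives $\delta(b_{i+1}-b_i-1-\delta)$, which is the claim. The hypothesis $\delta<b_{i+1}$ guarantees that block $i+1$ keeps positive width, so $b'$ genuinely has $k$ blocks. The only delicate point is justifying that $h_1$ is unchanged, and hence that every height outside block $i+1$ is preserved; this is where I use $\gamma=n-\sum b_j$ being equal for $b$ and $b'$. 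The rest is bookkeeping.
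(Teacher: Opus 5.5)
Your proposal is correct and follows the paper's proof: compare the height sequences via Lemma~\ref{lem:height-seq}, observe that only $h_{i+1}$ changes (by $+\delta$), and expand the two affected terms using $h_{i+1}=h_i+b_i+1$. Your extra steps make explicit what the paper leaves implicit. You justify that $h_1$ is unchanged (equal gap count via Lemma~\ref{lem:alg}) and that the heights for $j\ge i+2$ are preserved (prefix sums cancel). You also correctly note the index typo in the statement of Lemma~\ref{lem:height-seq}.
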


\begin{proof}
Let $(h_1, \ldots, h_k)$ and $(h_1', \ldots h_k')$ be the height sequences for $b$ and $b'$, respectively.
By Lemma~\ref{lem:height-seq}, we have
\[
h_j' = h_j \quad \mbox{for } j \neq i+1
\]
and 
\[
h_{i+1}' = 1+b'_{i} +h_{i}' = 1+ (b_{i} +\delta)  + h_{i} = h_{i+1} + \delta.
\]
Therefore
\begin{align*}
w(b') - w(b) &=
\sum_{j=1}^k b'_j h'_j - \sum_{j=1}^k b_j h_j 
    = b'_{i}h'_{i} + b'_{i+1}h'_{i+1}  - \big( b_{i}h_{i} + b_{i+1}h_{i+1} \big) \\
    & = (b_{i}+\delta) h_{i} - b_{i} h_{i}  +  (b_{i+1}-\delta)(h_{i+1}+\delta)  - b_{i+1}h_{i+1} \\
    &= \delta ( h_{i} -  h_{i+1} +  b_{i+1} - \delta) 
    = \delta( h_{i} - (1 +b_{i} + h_{i}) +b_{i+1} - \delta) \\
    &= \delta( b_{i+1} - b_{i} -1 - \delta),
\end{align*}
which completes the proof.
\end{proof}

Next, we have three corollaries about the variation between successive blocks of maximum weight sequences. First, we prove that the block sequence is weakly increasing, but by no more than two at every entry. 

\begin{cor}
\label{cor:block_var}
If $b = (b_1,\ldots, b_k) \in \mwblockset{n}$  then
\[ 
0 \leq b_{i+1}-b_{i} \leq 2
\]
for  $1 \leq i \leq k-1$. 
\end{cor}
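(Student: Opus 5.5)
The plan is to argue by contradiction using Lemma~\ref{lem:block-delta}, with $\delta = 1$ in both directions. Fix $b \in \mwblockset{n}$ and an index $1 \leq i \leq k-1$.

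\emph{Upper bound.} Suppose $b_{i+1} - b_i \geq 3$. Then $b_{i+1} \geq 4 > 1$, so $\delta = 1$ satisfies $1 \leq \delta < b_{i+1}$. Shifting one unit from block $i+1$ to block $i$ gives $b'$, and by Lemma~\ref{lem:block-delta}
\[
w(b') - w(b) = b_{i+1} - b_i - 2 \geq 1 > 0.
\]
This contradicts maximality of weight, so $b_{i+1} - b_i \leq 2$.

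\emph{Lower bound.} Suppose $b_{i+1} < b_i$. Apply Lemma~\ref{lem:block-delta} in reverse: move one unit from block $i$ to block $i+1$. This is legitimate because $b_i \geq b_{i+1} + 1 \geq 2$, so block $i$ remains nonempty. Write $c$ for the resulting sequence, with $c_i = b_i - 1$ and $c_{i+1} = b_{i+1} + 1$; then $b$ is obtained from $c$ by $\delta = 1$, and $1 < c_{i+1}$ holds. The lemma gives
\[
w(b) - w(c) = c_{i+1} - c_i - 2 = b_{i+1} - b_i \leq -1,
\]
so $w(c) > w(b)$, again a contradiction. Hence $b_{i+1} \geq b_i$.

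\emph{Validity of the modified sequences.} We must check that $b'$ and $c$ lie in $\blockset{n}$. Both keep the same number $k$ of blocks, the same total width, and the same leading-gap status, so by Lemma~\ref{lem:alg} they still have length $n$. Lemma~\ref{lem:height-seq} then produces a valid maximal sequence, since every block has positive width and the heights equal the starting indices, as condition (c) of Lemma~\ref{lemma:max-skip-sequence} requires.

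The only real subtlety is this applicability check, namely that the modified block sequence is genuinely in $\blockset{n}$ and the hypothesis $\delta < b_{i+1}$ holds; I expect it to be routine. The rest is the sign computation above.
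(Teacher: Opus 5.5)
Your proof is correct and follows the paper's argument: both bounds come from Lemma~\ref{lem:block-delta} applied to a one-unit shift between adjacent blocks, and the upper bound uses $\delta=1$ exactly as the paper does. For the lower bound the paper plugs $\delta=-1$ directly into the lemma, which is outside its stated range $1\le\delta<b_{i+1}$; you instead apply the lemma with $\delta=1$ to the shifted sequence $c$ and check that $c$ and $b'$ lie in $\blockset{n}$, which is a slightly more careful version of the same step.
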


\begin{proof}
Since $b$ has maximum weight, we have $0 \geq w(b') - w(b)$ for every $b' \in \mwblockset{n}$. In particular, for the block sequence $b'$  in Lemma~\ref{lem:block-delta} with  $\delta=1$, we have
$$
0 \geq w(b') - w(b) = b_{i+1} - b_{i} -2
$$
which is equivalent to $b_{i+1} - b_{i} \leq 2$. On the other hand, taking $\delta=-1$ in Lemma~\ref{lem:block-delta} gives
$$
0 \geq w(b') - w(b) = -(b_{i+1} - b_{i})
$$
which is equivalent to $ b_{i+1} - b_i \geq 0$. 
% Putting these inequalities together tells us that
% $$
% 0 \leq b_{i+1} - b_{i} \leq 2
% $$
% for the maximal block sequence $s$.
\end{proof}

%Our next lemma states that when a maximal jump-float sequence has a block sequence with consecutive equal blocks, there exists a another maximal jump-float sequence where these two blocks are not equal. 

The subsequent two corollaries describe the behavior of successive equal blocks in a maximum weight block sequence.

\begin{cor}
    \label{cor:block_shift}
    Suppose that $b=(b_1,\ldots b_k)  \in \mwblockset{n}$ such that $b_{i}=b_{i+1}$ for some $i$. Then  $b' \in \blockset{n}$ with block sequence
    \[
    b_j'=\left\{ \begin{array}{ll} b_{i}-1 & \text{ for } j=i, \\ b_{i+1}+1 & \text{ for } j=i+1, \\ b_j & \text{ otherwise } \end{array} \right.
    \]
    has weight $w(b') = w(b),$ and hence $b' \in \mwblockset{n}$ as well.
\end{cor}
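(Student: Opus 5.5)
The plan is to apply Lemma~\ref{lem:block-delta} with $\delta=-1$. This is the same substitution used in the proof of Corollary~\ref{cor:block_var}, where the lemma's formula was already applied with negative $\delta$. The transformation in the statement moves one unit of width from block $i$ to block $i+1$. In the notation of Lemma~\ref{lem:block-delta}, this is $b_i' = b_i + \delta$ and $b_{i+1}' = b_{i+1} - \delta$ with $\delta = -1$.

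First I will check that the computation in the proof of Lemma~\ref{lem:block-delta} does not depend on the sign of $\delta$. That proof only uses Lemma~\ref{lem:height-seq}: the heights satisfy $h_j' = h_j$ for $j \neq i+1$, and $h_{i+1}' = h_{i+1} + \delta$. The number of gaps, and hence $h_1$, is unchanged, because $\sum b_j' = \sum b_j$ and $k$ is unchanged. The algebra is then identical for $\delta=-1$, giving
\[
w(b') - w(b) = -\,(b_{i+1} - b_i - 1 + 1) = -(b_{i+1}-b_i).
\]
Substituting the hypothesis $b_i = b_{i+1}$ gives $w(b') - w(b) = 0$.

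The one point requiring care, and the main obstacle, is confirming that $b'$ is a legitimate element of $\blockset{n}$. We need $b_i' = b_i - 1 \geq 1$, so that block $i$ does not vanish and the sequence keeps length $k$ with the same gap structure. If $b_i = b_{i+1} = 1$, the shifted block would have width $0$, which is not allowed. I would handle this by showing that this case cannot occur in a maximum weight sequence, or else by restricting the corollary to $b_i \geq 2$. The cleanest route is Corollary~\ref{cor:block_var} together with an argument about where width-one blocks can sit. Failing that, I would note that for $b_i=b_{i+1}=1$ merging is impossible, and record this as an implicit hypothesis. Once $b_i\ge 2$ holds, the positivity of all entries and the unchanged gap count make $b'$ correspond to a maximal sequence in $\maxset{n}$ via Lemmas~\ref{lem:alg} and~\ref{lem:height-seq}.

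Finally, since $b \in \mwblockset{n}$ attains $\maxweight{n}$ and $w(b') = w(b)$, the sequence $b'$ also attains the maximum weight. Hence $b' \in \mwblockset{n}$.
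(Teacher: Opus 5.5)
Your weight computation is the paper's entire proof: its argument is the single line ``by Lemma~\ref{lem:block-delta} with $\delta=-1$, $w(b')=w(b)-(b_{i+1}-b_i)=w(b)$.'' Your check that the algebra in that lemma does not depend on the sign of $\delta$ is a useful addition. The lemma is stated only for $1\le\delta<b_{i+1}$, and the paper applies it at $\delta=-1$ without comment.

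The step that needs fixing is your first suggested treatment of the edge case $b_i=b_{i+1}=1$. It cannot be shown impossible for maximum weight sequences, because it occurs. For example, $s=(1,0,3)\in\maxweightset{3}$ has $b=(1,1)$, and $s=(1,0,3,0,5,5)\in\maxweightset{6}$ has $b=(1,1,2)$. In both cases the prescribed $b'$ starts with $0$, so $b'\notin\blockset{n}$. The statement, read literally, fails there, and the paper's proof passes over this silently.

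What you can show is that this is the only bad case:
\begin{itemize}
\item For $i\ge2$, Corollary~\ref{cor:block_var} forces $b_{i-1}=1$ as well. Replacing three consecutive width-one blocks by two width-two blocks occupies the same positions and raises the weight by $h_{i-1}>0$.
\item For $i=1$, a leading gap is impossible, since $(0,2,0,4,\ldots)$ loses one unit of weight to $(1,0,3,3,\ldots)$ with everything after unchanged.
\end{itemize}
In the remaining case, $i=1$ and $h_1=1$, the recursion of Lemma~\ref{lem:height-seq} still runs with $b_1'=0$. The empty first block together with its gap is simply a leading gap, and the weight identity holds. For instance, $(1,0,3)\mapsto(0,2,2)$ and $(1,0,3,0,5,5)\mapsto(0,2,2,0,5,5)$, and the images lie in $\maxweightset{3}$ and $\maxweightset{6}$ respectively.

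So your fallback is the right repair. Either add the hypothesis $b_i\ge2$, or read $b'$ as an extended block sequence, the convention the paper introduces later in Section~\ref{sec:construct}. With either, your proof is complete.
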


\begin{proof}  By Lemma~\ref{lem:block-delta} with $\delta=-1$, $w(b')=w(b)-(b_{i+1}-b_{i})=w(b)$. 
\end{proof}

%Further, we claim that no jump-float sequence with three consecutive equal blocks can be a maximum weight jump-float sequence. 

\begin{cor}
\label{cor:no_three_equal}
If $b=(b_1,\ldots,b_k)  \in \blockset{n}$ with $b_{i}=b_{i+1}=b_{i+2}$, then $b \notin \mwblockset{n}$.
\end{cor}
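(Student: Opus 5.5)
Suppose $b_i=b_{i+1}=b_{i+2}=m$ and assume for contradiction that $b\in\mwblockset{n}$. The plan is to apply Corollary~\ref{cor:block_shift} once and then reach a contradiction with Corollary~\ref{cor:block_var}.

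First apply Corollary~\ref{cor:block_shift} to the equal pair $b_{i+1}=b_{i+2}$. This gives a sequence $b'\in\mwblockset{n}$ with
\[
b'_{i+1}=m-1, \qquad b'_{i+2}=m+1,
\]
and all other entries unchanged. In particular $b'_i=m$.

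Now look at positions $i$ and $i+1$ of $b'$. We have $b'_{i+1}-b'_i=-1<0$. Since $b'\in\mwblockset{n}$, this contradicts the lower bound $0\le b'_{i+1}-b'_i$ in Corollary~\ref{cor:block_var}.

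Equivalently, we can compute directly with Lemma~\ref{lem:block-delta} on $b$ itself. Shift $b_i$ down by one and $b_{i+1}$ up by one, using the $\delta=-1$ version as in the proof of Corollary~\ref{cor:block_var}. The weight change is $-(b_{i+1}-b_i)=0$, so the new sequence also has maximum weight. But the new sequence has $b_{i+1}+1>b_{i+2}$, which again violates Corollary~\ref{cor:block_var}.

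The main obstacle is that Corollary~\ref{cor:block_shift} needs $b'$ to be a genuine element of $\blockset{n}$, which requires $m-1\ge 1$. If $m=1$, then $b'_{i+1}=0$ and $b'$ is not a valid block sequence. So the case $m=1$ must be handled separately.

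For $m=1$, work on the full sequence $s$. Suppose the blocks at $i,i+1,i+2$ each have width $1$, at heights $h$, $h+2$, $h+4$. They occupy the stretch $(h,0,h+2,0,h+4)$ of $s$. Replace this stretch by $(h,h,0,h+3,h+3)$. The result is still a valid maximal sequence of the same length $n$. Its weight on this stretch is $4h+6$, compared with $3h+6$ before, so the weight increases by $h\ge 1$. This contradicts maximality of $w(b)$.

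In block language, this replacement turns the widths $(1,1,1)$ into $(2,2)$ and reduces the number of blocks by one. That is why it is not covered by Lemma~\ref{lem:block-delta}, and it is why I would verify the $m=1$ case directly at the level of $s$ rather than through the block lemmas.
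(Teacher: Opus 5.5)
Your proof is correct. For $m\ge 2$ it is essentially the paper's argument. The paper applies Corollary~\ref{cor:block_shift} to the pair $(i,i+1)$, getting $b'_{i+1}=m+1>m=b'_{i+2}$, and then invokes Corollary~\ref{cor:block_var}. That is exactly your ``equivalently'' paragraph; your primary version, shifting the pair $(i+1,i+2)$, is the mirror image of it.

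The one real difference is your separate treatment of $m=1$, which the paper's proof skips. In that case the shifted sequence has a block of width $0$ and so is not in $\blockset{n}$. The case genuinely occurs, since for instance $(1,1,1)\in\blockset{5}$. Your local replacement $(h,0,h+2,0,h+4)\mapsto(h,h,0,h+3,h+3)$ stays maximal and raises the weight by $h\ge 1$, so it closes that gap cleanly.
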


\begin{proof}
Let $b' \in \blockset{n}$ be the block sequence constructed in Corollary~\ref{cor:block_shift}, which has $w(b')=w(b)$.
Observe block sequence $b'$ has $b_{i+1} > b_{i+2}$. By Corollary~\ref{cor:block_var}, block sequence $b'$ does not have maximum weight, so neither does $b$.
%Consider the jump-float sequence $s'$ with block sequence 
%\[
%b_j'=\left\{ \begin{array}{l l} b_{i-1}-1 & \text{ if } j=i-1,  \\ b_i+1 & \text{ if } j=i, \\ b_i & %\text{ otherwise. }\end{array} \right.
%\]  
%By Corollary~\ref{cor:block_shift}, $w(s)=w(s')$. Observe that for jump-float sequence $s'$, we have $b_{i}' > b_{i+1}'$. By Corollary~\ref{cor:block_var}, the jump-float sequence $s'$ does not have maximum weight, so neither does $s$.
\end{proof}

We use these corollaries to further classify when the block increases by zero and when it increases by two. Specifically, these behaviors alternate within the block sequence.

%We are now ready to prove the two main results of this section. First, the block sequence of a maximum weight jump-float sequence must alternate increasing by two and staying constant.

\begin{prop}
    \label{prop:alternate}
    Let $b=(b_1,\ldots ,b_k) \in \mwblockset{n}$. 
    \begin{enumerate}[(a)]
    \item Suppose that there exists $i<j$ such that $b_i=b_{i-1}$ and $b_{j+1}=b_{j}$. Then there exists an $i\leq r<j$ such that $b_{r+1}=b_r+2$.
    \item Suppose that there exists $i<j$ such that $b_i=b_{i-1}+2$ and $b_{j+1}=b_{j}+2$. Then there exists an $i\leq r<j$ such that $b_{r+1}=b_r$. 
    \end{enumerate}
\end{prop}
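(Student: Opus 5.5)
We argue by contradiction in both parts. The tools are Lemma~\ref{lem:block-delta}, used as a weight-preserving move, together with the constraints of Corollaries~\ref{cor:block_var} and~\ref{cor:no_three_equal}. By Corollary~\ref{cor:block_var}, every consecutive difference $d_m = b_{m+1}-b_m$ of $b \in \mwblockset{n}$ lies in $\{0,1,2\}$. Such a move preserves maximum weight. By Lemma~\ref{lem:block-delta}, replacing $(b_m,b_{m+1})$ by $(b_m+\delta, b_{m+1}-\delta)$ with $\delta=\pm 1$ changes the weight by $0$ exactly when $d_m = 0$ (for $\delta=-1$) or $d_m=2$ (for $\delta=1$). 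In either case the new sequence again lies in $\mwblockset{n}$.

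For (a), suppose the conclusion fails. Replace $i$ by the largest index $i' \le j$ with $b_{i'}=b_{i'-1}$ and keep $j$. Then there is no $+2$ step in the range, and we may assume no flat step strictly between the two flat steps $(i-1,i)$ and $(j,j+1)$. Hence $d_m=1$ for $i\le m\le j-1$. Now push the right flat step leftward. Apply the $\delta=-1$ move to the pair $(j,j+1)$, which is allowed since $b_j \ge b_i + (j-i) \ge 2$, so the shrunk block stays positive. This preserves weight and produces $b'_j = b_j-1 = b_{j-1}$, so the flat step now sits at $(j-1,j)$, while $d'_j = 2$. We repeat on $(j-1,j)$, and so on; each intermediate sequence is in $\mwblockset{n}$. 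After the flat reaches $(i,i+1)$ we obtain $b'_{i-1}=b'_i=b'_{i+1}$. This contradicts Corollary~\ref{cor:no_three_equal}. (If $j=i$ there is nothing to push, but $i<j$ is assumed.)

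For (b), the argument is symmetric with $\delta=+1$. Suppose no flat step exists. Choose the two $+2$ steps $(i-1,i)$ and $(j,j+1)$ consecutive among $+2$ steps, so that $d_m = 1$ for $i \le m \le j-1$. Apply the $\delta=+1$ move at $(i-1,i)$. It is weight-neutral since $d_{i-1}=2$, and it is admissible since $b_i\ge 3 > 1$. It yields $d'_{i-1}=0$ and $d'_i=2$, so the $+2$ step moves one place right. Iterating, we reach a maximum weight sequence with $d'_{j-1}=2$ and $d'_j=2$. One more $\delta=+1$ move at $(j,j+1)$ is weight-neutral and gives a sequence with $d''_{j-1}=3$. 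This contradicts Corollary~\ref{cor:block_var}.

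The main point of care is admissibility of each move: Lemma~\ref{lem:block-delta} requires the block being shrunk to remain positive. That is why in (a) I propagate from the right end, where blocks have size at least $2$, rather than from the left, where $b_{i-1}$ could equal $1$. The other thing to track is that all intermediate differences stay in the range where the moves are weight-neutral.
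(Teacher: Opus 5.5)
Your proof is correct and is essentially the paper's argument: you use weight-neutral shifts from Lemma~\ref{lem:block-delta} to slide one flat (resp.\ $+2$) step onto the other until Corollary~\ref{cor:no_three_equal} (resp.\ Corollary~\ref{cor:block_var}) is violated, iterating where the paper inducts on $j-i$. Your version is also slightly tighter in three places: sliding from the right in (a) keeps every shrunk block positive, whereas the paper's move decrements $b_{i-1}$, which can equal $1$ when $i=2$; your reduction to consecutive flat/$+2$ steps justifies the paper's ``all steps are $+1$'' assumption; and in (b) your extra final move is what actually produces the jump of $3$, since the paper's base-case move at $(i,i+1)$ is not weight-neutral as written.
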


\begin{proof}
The proofs of both statements use induction and proof by contradiction. 
Let $b=(b_1,\ldots ,b_k) \in \mwblockset{n}$.

We begin with (a). Suppose for the sake of contradiction that for all $r$ such that $i\leq r <j$, we have $b_{r+1} = b_r+1$. We proceed with induction on $d=j-i$. For $d=1$, define a new block sequence $b' \in \blockset{n}$ as
\[
b'_\ell=\left\{\begin{array}{ll} b_{i-1}-1 & \text{ for } \ell=i-1, \\ b_{i}+1 & \text{ for } \ell=i, \\ b_\ell & \text{ otherwise. }\end{array}\right.
\] 
By Corollary~\ref{cor:block_shift}, $w(b)=w(b')$, but $b'_{i}=b'_j=b'_{j+1}$. By Corollary~\ref{cor:no_three_equal}, neither $b'$ nor $b$ is maximum weight. Now consider $d=m+1.$ We define the same modified block sequence $b'$  as above, and again note that by Corollary~\ref{cor:block_shift}, $w(b)=w(b')$. Note, however, that $b'_{i}=b'_{i+1}$, which means that block sequence $b'$ falls into the case where $d=m$. By induction, $b'$ does not have maximum weight, and neither does $b$. This is a contradiction, and as such, the claim holds. 

For (b),  suppose for the sake of contradiction, that for all $r$ such that $i\leq r <j$, $b_{r+1} = b_r+1$. We proceed by induction on $d=j-i$. For $d=1$, define a new block sequence $b' \in \blockset{n}$  as 
\[
b'_\ell=\left\{\begin{array}{ll} b_{i}+1 & \text{ for } \ell=i, \\ b_{i+1}-1 & \text{ for } \ell=i+1, \\ b_\ell & \text{ otherwise. } \end{array}\right.
\]
By Corollary~\ref{cor:block_shift}, $w(b)=w(b')$, but $b'_{j+1}=b'_j+3$. By Corollary~\ref{cor:block_var}, neither $b'$ nor $b$ are maximum weight. Now consider $d=m+1.$ We define the same modified block sequence $b'$  as above, and again note that by Corollary~\ref{cor:block_shift}, $w(b)=w(b')$. Note, however, that $b'_{i+1}=b'_{i}+2$, which means the sequence $b'$ falls into the case where $d=m$. By induction, $b'$ does not have maximum weight, and neither does $b$. This is a contradiction, and as such, the claim holds. 
\end{proof}

Proposition~\ref{prop:alternate} provides a strong restriction on how the entries of a maximum weight block sequence vary from one to the next. We can now prove the main result of this subsection, Proposition~\ref{prop:block_index}, that block widths of $b \in \mwblockset{n}$ differ from their index by at most 1.

\begin{proof}[Proof of Proposition~\ref{prop:block_index}]
   Let $i$ be the smallest index such that $b_i\neq i+e_i$ for $e_i \in \{-1,0,1\}.$

   {\bf Case 1:} Suppose $b_i<i-1$. By the definition of $i$, we know $b_{i-1}\geq (i-1)-1$. Since $b$ is weakly increasing by Corollary~\ref{cor:block_var}, it follows that $b_i=b_{i-1}=i-2$. We now consider how the block sequence varies term to term. Let $\alpha_j=b_{j+1}-b_j$. Note, $\alpha_{i-1}=0$. By Corollary~\ref{cor:block_var}, we know $\alpha_j \in \{0,1,2\}$, so $\{\alpha_j : j \in [i-2]\}$ is a multiset of 0, 1, and 2. Define $x_0$ to be the cardinality of 0 in this multiset, and similarly define $x_1$ and $x_2$. Immediately, we know $x_0+x_1+x_2=i-2$. Further, we also know $x_1+2x_2=i-2-b_1\leq i-3$. Algebraic manipulation yields that $x_0\geq x_2 +1$. However, by Proposition~\ref{prop:alternate}, we know $|x_0-x_2|\leq 1$. As such, $x_0=x_2+1$, and there is one more jump by 0 than there is by 2. However, this is a contradiction of Proposition~\ref{prop:alternate} (a), since there will be two more zeros than twos in the multiset $\{\alpha_j : j \in [i-1]\}$. 
   %\todo{there is probably a better way to invoke pigeonhole here, but it's definitely works this way too.}

   {\bf Case 2:} Suppose $b_i>i+1$. Then, again by the definition of $i$, we know $b_{i-1}\leq (i-1)+1$. By Corollary~\ref{cor:block_var}, it follows that $b_{i-1}=i$ and $b_{i}=i+2$. A similar argument to Case 1 can be used to contradict Proposition~\ref{prop:alternate} (b), forcing two more twos than zeros in the multiset $\{\alpha_j : j \in [i-1]\}$. 
   %\todo{this might be too much sweeping under the rug, but it's really gonna be the same thing.}
\end{proof}

%%%%%%%
%%%%%%%
%%%%%%%
\subsection{Maximum weight block sequences in a given row}
\label{sec:max-block-row}

With a more thorough understanding of the structure of the block sequence, we return to the jump-float sequence. If a block sequence $b$ has length $k$, and each entry has the property that $b_i=i+e_i$ for $e_i \in \{-1,0,1\}$, it stands to reason that we are able to bound the length of the the jump-float sequence associated with $b$. 

In this section, we  classify which sizes $n$ of  $b \in \mwblockset{n}$ can have $k$ blocks. We will see that $k$  is related to the row of $n$ in the size trapezoid $S$ from Definition~\ref{def:size-trapezoid}. Recall that the values $(\row{n}, \ind{n})$ from Lemma~\ref{lem:row-ind} are the row and column of size $n$ in trapezoid $S$.

\begin{prop}
\label{prop:k_blocks_length_n}
If  $b \in \mwblockset{n}$ with length $|b|=k$, then
\begin{equation}
\label{eqn:k_blocks_length_n}
\T_k-1\leq n\leq \T_{k+2}-3.  
\end{equation}
\end{prop}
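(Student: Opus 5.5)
We combine the length formula of Lemma~\ref{lem:alg} with the entrywise control from Proposition~\ref{prop:block_index}. For $b \in \mwblockset{n}$ with $|b|=k$ we have $n = \gamma + \sum_{i=1}^k b_i$ with $\gamma \in \{k-1,k\}$ and $b_i = i + e_i$, $e_i\in\{-1,0,1\}$. This gives
$$
n = \gamma + \T_k + \sum_{i=1}^k e_i .
$$
A naive bound $|\sum e_i|\le k$ is too weak: it would give roughly $\T_k-1 \le n \le \T_k+2k$, while the target upper bound is $\T_{k+2}-3=\T_k+2k$. So the upper bound already matches, and only the lower end needs care. It also needs a sharper bound on $\sum e_i$, since naively $n\ge \T_k + k-1 - k$.

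The key step is therefore to show that the partial sums of $e_i$ cannot accumulate. We use Proposition~\ref{prop:alternate}: the increments $\alpha_j = b_{j+1}-b_j \in\{0,1,2\}$ have their $0$'s and $2$'s alternating. Hence
$$
b_k - b_1 = (k-1) + (x_2 - x_0), \qquad x_2 - x_0 \in \{-1,0,1\},
$$
where $x_0, x_2$ count the increments equal to $0$ and $2$. Since $e_j - e_{j-1} = \alpha_{j-1}-1$, the sequence $e_j$ starts at $e_1 = b_1 - 1$. Here $b_1 \in\{1,2\}$, and $b_1 \ge 0$ is forced because $b_1 \geq 1$; moreover $e_1=-1$ is impossible since $b_1\ge1$. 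The sequence then moves by $\pm1$ or $0$ with alternating nonzero steps, so it oscillates between two adjacent values.

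Concretely, the $e_j$ take values in $\{e_1, e_1\pm 1\}$, restricted to one pair of adjacent values. If $e_1=0$, then $e_j\in\{0,1\}$ throughout, or $e_j \in \{-1,0\}$ throughout. If $e_1 = 1$, then $e_j\in\{0,1\}$ throughout.

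For the lower bound, the worst case is all $e_j\in\{-1,0\}$ with $e_1=0$ and $\gamma=k-1$. We then need $\sum e_j \ge -(k-1)$, which is automatic, giving $n \ge \T_k + (k-1) - (k-1) = \T_k$. This is even stronger than the stated $\T_k - 1$. To be safe, we use only $e_1\ge 0$ and $e_j\ge -1$: this gives $\sum e_j \ge -(k-1)$, and with $\gamma\ge k-1$ we get $n\ge \T_k$, which implies the claim.

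For the upper bound, $\gamma\le k$ and $e_j\le 1$ give $n \le \T_k + 2k = \T_{k+2}-3$, using $\T_{k+2} = \T_k + 2k+3$.

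The main obstacle is simply confirming $e_1 \ge 0$, i.e.\ $b_1\ge 1$. This holds because blocks are nonempty. With that, the bounds in \eqref{eqn:k_blocks_length_n} follow directly, and the alternation argument is only needed if a sharper statement is required later.
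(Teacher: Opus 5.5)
Your argument is correct and is essentially the paper's proof. You write $n=\gamma+\T_k+\sum_{i=1}^k e_i$ using Lemma~\ref{lem:alg} and Proposition~\ref{prop:block_index}, then bound $\gamma\in\{k-1,k\}$ and each $e_i$ termwise.

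Your claim that the naive bound is ``too weak'' is mistaken, though. The bound $\sum e_i\ge -k$ already gives $n\ge (k-1)+\T_k-k=\T_k-1$, which is exactly the stated lower bound and is what the paper uses. So the alternation paragraph is unnecessary, and your observation $e_1\ge 0$ only sharpens the conclusion to the true but unneeded bound $n\ge \T_k$.
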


In other words, if a maximum weight sequence $s \in \maxweightset{n}$ has $k$ blocks, then $\row{n} \in \{ k-1, k \}$, so that $n$ is in either row $k-1$ or row $k$ of size trapezoid $S$ from Definition~\ref{def:size-trapezoid}.

\begin{proof}
Let $b= (b_1, \ldots, b_k)$. 
By Proposition~\ref{prop:block_index}, we have
$$
\sum_{i=1}^k b_i = \sum_{i=1}^k (i + e_i)  = \T_k  + \sum_{i=1}^k e_i
$$
where $-k\leq \sum_{i=1}^ke_i\leq k$. By Lemma~\ref{lem:alg},
\[ 
n-k \leq \T_k  + \sum_{i=1}^k e_i \leq n-k+1.
\]
The left inequality implies that $n-k  \leq  \T_k + k$ which is equivalent to $n  \leq \T_k + 2k = \T_{k+2} -3$.
The right inequality implies that
$\T_k - k  \leq n-k+1$
which is equivalent to $\T_k - 1 \leq n$. 
%To finish the proof of the claim, recall that by Definition~\ref{def:size-trapezoid}, row $j$ of size trapezoid $S$ starts with $n=\T_{j+1}-2$ and ends with \blue{$n=\T_{j+2}-3$}. Thus, by the proven inequality, $n$ lies in either row $k-1$ or row $k$.
\end{proof}

The next two corollaries characterize the number of blocks for maximum weight sequence whose size $\T_{j+1} -2 \leq n \leq \T_{j+2}-3$ is in row $j$ of size trapezoid $S$. As per Lemma~\ref{lem:row-ind}, this condition is equivalent to $\row{n} = \T_{j+1}-2$ and
$0 \leq \ind{n} \leq \rho_{n}+1.$

\begin{cor}
\label{cor:perfect_blocks}
    If $n=\T_{j+1}-2$ then $b \in \mwblockset{n}$ has length $|b|=j$.
\end{cor}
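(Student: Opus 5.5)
Let $n=\T_{j+1}-2$ and $b\in\mwblockset{n}$ with $|b|=k$. By Proposition~\ref{prop:k_blocks_length_n}, $\T_k-1\le n\le \T_{k+2}-3$. The upper bound gives $\T_{j+1}-2\le \T_{k+2}-3$, which forces $k\ge j$: for $k\le j-1$ we would have $\T_{k+2}-3\le \T_{j+1}-3<n$. The lower bound gives $\T_k-1\le \T_{j+1}-2$, so $\T_k<\T_{j+1}$ and $k\le j$ unless $k=j+1$. But $k=j+1$ gives $\T_{j+1}-1\le \T_{j+1}-2$, which is false. Hence $k=j$, and the corollary follows from this interval arithmetic alone.

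The plan is therefore: state the two inequalities from Proposition~\ref{prop:k_blocks_length_n}, substitute $n=\T_{j+1}-2$, and rule out $k\le j-1$ using the upper bound and $k\ge j+1$ using the lower bound. Both steps use the strict monotonicity of triangular numbers. The only care needed is at the boundary: the case $k=j+1$ fails by exactly one, because of the offset $-1$ versus $-2$. If I mis-remember a constant, I would recheck it against Lemma~\ref{lem:alg} with the perfect sequence, which has $k=j$ blocks and $j-1$ gaps, as a sanity check.

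I expect no real obstacle here. The statement only requires that the intervals $[\T_k-1,\T_{k+2}-3]$ containing $n=\T_{j+1}-2$ occur exactly for $k=j$, and this is a direct check. If more robustness were wanted, I could confirm the inequality $\T_{j+1}-2<\T_{j+1}-1$, which is the one that excludes $k=j+1$. That edge case is precisely where the perfect sizes sit, just below the start of the range for $k=j+1$ blocks.
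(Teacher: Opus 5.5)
Your proposal is correct and matches the paper's argument: substitute $n=\T_{j+1}-2$ into the bounds $\T_k-1\le n\le \T_{k+2}-3$ of Proposition~\ref{prop:k_blocks_length_n} and check that only $k=j$ satisfies both. One small wording point: $\T_k<\T_{j+1}$ already forces $k\le j$ by strict monotonicity, so the ``unless $k=j+1$'' clause and the separate check of $k=j+1$ are redundant, though harmless.
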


\begin{proof}
For $n=\T_{j+1}-2$, equation~\eqref{eqn:k_blocks_length_n} only holds when $k=j$.
\end{proof}

\begin{cor}
\label{cor:row_to_blocks}
    If $\T_{j+1}-1 \leq n \leq \T_{j+2}-3$ then $b \in \mwblockset{n}$ has length $|b| \in \{ j, j+1 \}$. 
\end{cor}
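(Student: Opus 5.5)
The plan is to read this off Proposition~\ref{prop:k_blocks_length_n}, in the same way as the proof of Corollary~\ref{cor:perfect_blocks}. Fix $b \in \mwblockset{n}$ with $|b| = k$. By equation~\eqref{eqn:k_blocks_length_n} we have $\T_k - 1 \leq n \leq \T_{k+2} - 3$. The aim is to turn this two-sided inequality on $n$ into a two-sided inequality on $k$, and then check that the hypothesis $\T_{j+1}-1 \leq n \leq \T_{j+2}-3$ leaves only $k \in \{j, j+1\}$.

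First I rule out $k \geq j+2$. In that case the left inequality gives
\[
n \geq \T_k - 1 \geq \T_{j+2} - 1 > \T_{j+2} - 3,
\]
since the triangular numbers are increasing. This contradicts the hypothesis $n \leq \T_{j+2} - 3$.

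Next I rule out $k \leq j-1$. In that case the right inequality gives
\[
n \leq \T_{k+2} - 3 \leq \T_{j+1} - 3 < \T_{j+1} - 1,
\]
which contradicts the hypothesis $n \geq \T_{j+1} - 1$. Hence $k \in \{j, j+1\}$.

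There is no real obstacle; the argument is an index comparison. The only points needing care are the monotonicity of $\T_i$ and matching the offsets ($-1$ versus $-3$) correctly. It is also worth remarking why both values of $k$ must be allowed here. The lower endpoint $n = \T_{j+1}-2$ is excluded from this corollary because it is the perfect case handled by Corollary~\ref{cor:perfect_blocks}. Throughout the rest of the range $\T_{j+1}-1 \leq n \leq \T_{j+2}-3$, both $k = j$ and $k = j+1$ satisfy \eqref{eqn:k_blocks_length_n}, so this argument alone cannot exclude either.
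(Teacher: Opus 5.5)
Your proof is correct and takes the same route as the paper, which reads the corollary directly off inequality~\eqref{eqn:k_blocks_length_n} of Proposition~\ref{prop:k_blocks_length_n}. You additionally write out the two cases $k \geq j+2$ and $k \leq j-1$ and use the monotonicity of $\T_i$, steps the paper leaves implicit.
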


\begin{proof}
For $\T_{j+1}-1 \leq n \leq \T_{j+2}-3$, equation~\eqref{eqn:k_blocks_length_n} holds only when $k=j$ or $k=j+1$.
\end{proof}

Next, we have the following result, complementary to  Proposition~\ref{prop:k_blocks_length_n}.

\begin{prop}
    \label{prop:large_block}
    The last entry of $b \in \mwblockset{n}$ is either $\row{n}$ or $\row{n}+1$.
\end{prop}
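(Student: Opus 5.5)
The plan is to combine what we already know about the length of $b$ with a finer analysis of the ``error sequence'' $e_i = b_i - i$ from Proposition~\ref{prop:block_index}. Write $b=(b_1,\ldots,b_k)\in\mwblockset{n}$ with $b_i = i+e_i$ and $e_i\in\{-1,0,1\}$. By Corollaries~\ref{cor:perfect_blocks} and~\ref{cor:row_to_blocks}, $k\in\{\row{n},\row{n}+1\}$, so $b_k = k+e_k$ lies in $\{\row{n}-1,\ldots,\row{n}+2\}$. Only two configurations must therefore be excluded:
\begin{enumerate}[(i)]
\item $k=\row{n}$ and $e_k=-1$;
\item $k=\row{n}+1$ and $e_k=+1$.
\end{enumerate}
In each case we will show that the size $n$ forced by Lemma~\ref{lem:alg}, namely $n=\gamma+\T_k+\sum_i e_i$, falls outside row $\row{n}$, which is a contradiction.

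\textbf{Step 1: the walk $e_1,\ldots,e_k$.} Set $\alpha_i=b_{i+1}-b_i\in\{0,1,2\}$ (Corollary~\ref{cor:block_var}). Then $e_{i+1}-e_i=\alpha_i-1\in\{-1,0,1\}$, so $e$ is a lattice walk. Proposition~\ref{prop:alternate} says that the nonzero steps of this walk alternate in sign. Hence, after its first nonzero step, the walk is confined to two adjacent levels. Also $b_1\geq 1$, so $e_1\in\{0,1\}$.

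In case (i), the walk must reach level $-1$. Starting from $e_1=1$ this would require two consecutive down-steps, which is forbidden. So $e_1=0$ and every $e_i\in\{-1,0\}$, giving $\sum e_i\le -1$. Symmetrically, in case (ii) we get $e_1=0$ and every $e_i\in\{0,1\}$, giving $\sum e_i\ge 1$.

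\textbf{Step 2: compare with the row boundaries.} In case (i), Lemma~\ref{lem:alg} gives
\[
n\le k+\T_k+\sum e_i\le \T_{k+1}-1 .
\]
Row $k=\row{n}$ begins at $\T_{k+1}-2$, so the crude bound above leaves two residual sizes, $n=\T_{k+1}-2$ and $n=\T_{k+1}-1$, that still need to be killed. In case (ii), with $k=\row{n}+1$, we get
\[
n\ge (k-1)+\T_k+1=\T_{k+1}-1 ,
\]
which only just misses the end of row $k-1$, namely $\T_{k+1}-3$. So again a small gap of two or three sizes has to be closed.

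\textbf{Step 3: sharpen the counts of $\sum e_i$ and $\gamma$.} This is the step I expect to be the main obstacle. I would attack it by a further weight comparison.
\begin{itemize}
\item To control $\gamma$, I would show that a leading gap ($\gamma=k$, $h_1=2$) is incompatible with $e_1=0$ together with a final $-1$. The idea is to move the leading zero to the end of a block, or to shift one cell between the first two blocks as in Lemma~\ref{lem:block-delta}, and check that the weight strictly increases.
\item To control $\sum e_i$, I would use the alternation more precisely. The walk in case (i) starts at $0$ and ends at $-1$, so it has exactly one more down-step than up-steps; each down-step corresponds to $\alpha=0$, that is, a pair of equal consecutive blocks. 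Using Corollary~\ref{cor:block_shift}, I would shift such a pair to produce an equal-weight sequence whose last block is $k$ instead of $k-1$. Combining this with Corollary~\ref{cor:no_three_equal} should show that the walk spends at least two or three steps at level $-1$, pushing $n$ below $\T_{k+1}-2$.
\item Case (ii) is handled symmetrically, using Proposition~\ref{prop:alternate}(b).
\end{itemize}
If these local moves prove insufficient, the fallback is to directly compare $w(b)$ against the weight of an explicit competitor. For case (i) the competitor would be a sequence with last block $k$ and the $e$-walk sitting at level $0$. For case (ii) it would be one with $k-1$ blocks. In each case one computes the weight difference from the height recursion of Lemma~\ref{lem:height-seq} and shows the competitor is strictly heavier.
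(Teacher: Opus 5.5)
\textbf{Steps 1 and 2.} These are essentially the paper's opening move. In the bad case $|b|=\row{n}$, the paper proves $b_i\le i$ via Proposition~\ref{prop:alternate}(a), which is the same as your walk being confined to $\{-1,0\}$. It then bounds $n$ with Lemma~\ref{lem:alg}. However, you have misread what remains after Step 2.
\begin{itemize}
\item In case (i), $k+T_k-1=T_{k+1}-2$, not $T_{k+1}-1$. So exactly one configuration survives: $n=T_{k+1}-2$, $\gamma=k$ (a leading gap) and $\sum e_i=-1$, that is, $b=(1,2,\ldots,k-1,k-1)$.
\item In case (ii), your bound $n\ge T_{k+1}-1$ already exceeds the last entry $T_{k+1}-3$ of row $k-1$. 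That case is finished, and there is no gap to close there.
\item Your claim that $e_1=0$ in case (ii) does not follow ``symmetrically'', since $e_1=1$ is allowed. This is harmless, because $e_i\ge 0$ still holds.
\end{itemize}

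\textbf{The gap.} The surviving configuration is never eliminated, and the main plan of Step 3 cannot eliminate it.
\begin{itemize}
\item Its size $n=T_{k+1}-2$ really lies in row $k$. So your stated strategy, showing that $n$ falls outside the row, is impossible here.
\item The sequence $b=(1,2,\ldots,k-1,k-1)$ satisfies every local constraint available: Corollaries~\ref{cor:block_var} and~\ref{cor:no_three_equal}, and Propositions~\ref{prop:alternate} and~\ref{prop:block_index}. Its walk is $(0,\ldots,0,-1)$, visiting $-1$ only at the last step. So no shifting via Corollary~\ref{cor:block_shift} can show the walk ``spends two or three steps at level $-1$''.
\item Shifting a cell between the first two blocks fails too. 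For $k\ge 3$, Lemma~\ref{lem:block-delta} with $\delta=1$ gives a weight change of $2-1-1-1=-1$.
\end{itemize}

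\textbf{What is needed.} You need a move that changes $\gamma$. This is your unexecuted fallback, and it is exactly the paper's argument: compare with the perfect sequence $(1,2,\ldots,k)$, which has no leading gap.
\begin{itemize}
\item Here the heights are $h_i=T_i+1$. Deleting the leading zero lowers every height by one, losing $\sum b_i=T_k-1$.
\item Extending the last block by one cell, now of height $T_k$, gains $T_k$. The net gain is $+1$, so $b$ is not maximum weight.
\item Your idea to ``move the leading zero to the end of a block'' works only for the last block. Lengthening block $j<k$ instead gives net change $h_j-1-\sum_{i\le j}b_i=T_j-T_j=0$.
\end{itemize}
Without this explicit weight comparison, the proposition is not proved.
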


\begin{proof}
Consider $s \in \maxweightset{n}$, with block sequence $b \in \mwblockset{n}$. Let $\row{n} = k$, so that $\T_{k+1}-2\leq n \leq \T_{k+2}-3$. By Corollary~\ref{cor:row_to_blocks}, we know $|b| \in \{k,k+1\}$. Combining this with Proposition~\ref{prop:block_index}, we consider six possible cases: if $|b|=k$, then the last block $b_k$ is either $k-1$, $k$, or $k+1$, or if $|b|=k+1$, then the last block $b_{k+1}$ is either $k$, $k+1$, or $k+2$. Only two of these cases do not fall under the claim, and we will prove neither can occur.

\textbf{Case 1:} Suppose $|b|=k$ and $b_k=k-1$. We claim that $b_i\leq i$ for all $i$, and we proceed by contradiction. Suppose $j$ is the largest index such that $b_j >j$. By Proposition~\ref{prop:block_index}, it follows that $b_j=j+1$. Further, by the definition of $j$ and by Corollary~\ref{cor:block_var} and Proposition~\ref{prop:block_index}, $b_{j+1}=j+1$ and for every index $\ell >j$, $b_\ell \in \{\ell-1, \ell\}$. Consider the first index where this value is $\ell-1$. We know such an index exists, because $b_{k}=k-1$. For convenience of indexing, we consider this location to be $b_{m+1}=m.$ Note, then, that $b_p=p$ for all $j+1\leq p \leq m$. However, this contradicts Proposition~\ref{prop:alternate} part (a). Thus $b_i \leq i$ for all indices $i$. 

With this bound on the sizes of the blocks, we consider a bound for $n$. By Lemma~\ref{lem:alg}, we know 
\[n=\beta+\sum_{i=1}^k b_i \leq \beta +\sum_{i=1}^{k-1} i +k-1 =\beta+\T_{k}-1\]
where $\beta \in \{k-1,k\}$. We note that if $\beta=k-1$, or if $b_i<i$ for any $i$, we have an immediate contradiction to $\row{n}=k$. However, we must evaluate the case where $\beta=k$ and $b_i=i$ for all $i\leq k-1$. In this case, $n=k+\T_k-1=\T_{k+1}-2$, and $b=(1,2,\ldots,k-1,k-1)$, and $s_1=0$. 

We claim this sequence cannot be a maximum weight sequence: it is the same length as the perfect sequence for $n$ (with block sequence $(1,2,\ldots, k-1, k)$), yet has a smaller weight by exactly one. Note that with these conditions, the block sequence is uniquely determined, $b=(1,2,\ldots, k-1,k-1)$, with $k$ gaps, meaning $s_1=0$. We consider how transforming this sequence to the perfect sequence (see Definition~\ref{def:perfect}) with block sequence $(1,2,\ldots, k-1, k)$ changes the weight, but use the triangle interpretation of the sequence to clarify the counting. By removing the bottom row of the triangle, and adding on a column that increases the last block from width $k-1$ to $k$, we pivot our sequence to a perfect sequence. See Figure~\ref{fig:pivot} for an example of these two operations when $k=3$.
\begin{figure}[h!]
    \centering
\begin{tikzpicture}[scale=0.6]
        \tri[(-6, 0)]{0,2,0,0,0,0,0,0}{violet!50};
        \tri[(-6, 0)]{0,0,0,4,4,0,0,0}{teal!50};
        \tri[(-6, 0)]{0,0,0,0,0,0,7,7}{blue!50};
        \node at (-6,-.5) {(a)};
        \tri[(0,.37)]{1,0,0,0,0,0,0}{violet!50};
        \tri[(0,.37)]{0,0,3,3,0,0,0}{teal!50}
        \tri[(0,.37)]{0,0,0,0,0,6,6}{blue!50}
        \node at (0,-.5) {(b)};
        \tri[(6,0)]{1,0,0,0,0,0,0,0}{violet!50};
        \tri[(6,0)]{0,0,3,3,0,0,0,0}{teal!50};
        \tri[(6,0)]{0,0,0,0,0,6,6,6}{blue!50};
        \node at (6,-.5) {(c)};
\end{tikzpicture}
    \caption{Pivoting to a perfect sequence when $n=8$. (a) The original sequence with weight 24. (b) Remove the bottom row. (c) Add a column continuing the last block, resulting in a sequence with weight 25.}
    \label{fig:pivot}
\end{figure}
We claim the net change in weight in this process is one. In the removal of the bottom row, we lose $\T_{k-1}+k-1=\T_k -1$. However, we gain $\T_k$ in the addition of the last column. Thus, our original sequence was not maximum weight, and we arrive at a contradiction.

\textbf{Case 2:} Suppose $|b|=k+1$ and $b_{k+1}=k+2$. This case parallels Case 1 closely: we claim $b_i\geq i$ for all $i$ using a similar argument and contradicting Proposition~\ref{prop:alternate} part (b). We use this to bound $n$, shown below
\[n=\beta+\sum_{i=1}^{k+1}b_i=\beta+\sum_{i=1}^k b_i+k+2\geq k +\sum_{i=1}^k i+ k+2=\beta+\T_{k+1}+1.\]
Note, by Lemma~\ref{lem:alg}, $\beta\geq k$ in this case, which means $n\geq \T_{k+2}-1$, which contradicts $\row{n}=k$. 

Thus, in all possible cases, the last entry in the block sequence is either $k$ or $k+1$.
\end{proof}

We note that the remaining four cases described in the proof of Proposition~\ref{prop:large_block} are realized for $n\leq 7$.

% \todo{Do we need to state these four cases? Or are we good without explicitly saying this?}
% The proof of Proposition~\ref{prop:large_block} eliminates two cases, and we  note that  the remaining four cases are realized in the following examples. 
% \begin{itemize}
% \item Case $|b|=k$, $b_k=k$: when $k=2$ and $n=5$, we have maximum weight block sequence $b=(2,2)$ for sequence $s=(1,1,0,4,4)$.
% \item Case $|b|=k$, $b_k=k+1$: when $k=2$ and $n=6$, we have maximum weight block sequence $b=(2,3)$ for sequence $s=(1,1,0,4,4,4)$
% \item Case $|b|=k+1$, $b_{k+1}=k$: when $k=2$ and $n=6$, we have maximum weight block sequence $b=(1,1,2)$ for sequence $s=(1,0,3,0,5,5)$.
% \item Case $|b|=k+1$, $b_{k+1}=k+1$: when $k=2$ and $n=7$, we have maximum weight block sequence $b=(1,1,3)$ for sequence $s=(1,0,3,0,5,5,5)$.
% \end{itemize}

%% file: results.tex
\section{Results}
\label{sec:results}

In the previous section, we developed restrictions on the lengths and entries of a maximum weight block sequence. We are now ready to prove our two main theorems. We prove Theorem~\ref{thrm:unique} for perfect sequences in Section~\ref{sec:proof-perfect}. In Section~\ref{sec:proof-correct}, we prove the more general Theorem~\ref{thrm:correct} by strong induction. In Section~\ref{sec:oeis_ver}, we show that the sequences for $|\maxweightset{n}|$ and $\maxweight{n}$ match their claimed OEIS entries. Finally, Section~\ref{sec:construct} complements Theorem~\ref{thrm:correct} by providing a direct construction that creates the maximum weight block sequences.

\subsection{Proof of Theorem~\ref{thrm:unique}}
\label{sec:proof-perfect}

We return to the interesting case of perfect sequences. We are now ready to justify this moniker by proving Theorem~\ref{thrm:unique}, which states that when $n=\T_{k+1}-2$, the unique sequence in $\maxweightset{n}$ is the perfect sequence $(1,0,3,3,0,6,6,6,0, \ldots, 0, \T_k, \T_k \ldots, \T_k)$.

%\todo{I think that the proof of the next theorem is {\bf independent} of the proof of Proposition~\ref{prop:large_block}. So could we prove this first, and simplify the previous proof by restricting ourselves to $\T_{k+1}-1 \leq n \leq \T_{k+2}-3$?}

\begin{theorem*}[Theorem~\ref{thrm:unique}]
    Let $n=\T_{k+1}-2$ for some $k\geq 1$. Then $|\maxweightset{n}|=1$, and the unique maximum weight jump-float sequence is the perfect sequence for $n$ with weight $\sum_{i=1}^k i\T_i$.
\end{theorem*}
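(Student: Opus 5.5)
Let $n=\T_{k+1}-2$ and take any $b=(b_1,\ldots,b_m)\in\mwblockset{n}$, with $\gamma$ gaps. The plan is to show that $b$ is forced to be $(1,2,\ldots,k)$ with $\gamma=k-1$ gaps, i.e.\ exactly the block sequence of the perfect sequence. By Lemma~\ref{lem:height-seq} this determines $s$ uniquely, and the weight formula then follows from Definition~\ref{def:perfect}.

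By Corollary~\ref{cor:perfect_blocks}, $m=k$. Lemma~\ref{lem:alg} gives
\[
\sum_{i=1}^k b_i = n-\gamma \in \{\T_{k+1}-2-(k-1),\ \T_{k+1}-2-k\} = \{\T_k-1+0,\ \T_k-2\}+\{1,0\},
\]
that is, $\sum_i b_i=\T_k$ when $\gamma=k-1$ and $\sum_i b_i=\T_k-1$ when $\gamma=k$. Writing $b_i=i+e_i$ as in Proposition~\ref{prop:block_index}, this says $\sum_i e_i\in\{0,-1\}$. By Proposition~\ref{prop:large_block} (here $\row{n}=k$), $b_k\in\{k,k+1\}$, so $e_k\in\{0,1\}$.

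The key step is to show that all $e_i=0$. For this I would use the alternation structure. The increments $\alpha_i=b_{i+1}-b_i\in\{0,1,2\}$ (Corollary~\ref{cor:block_var}) have zeros and twos that alternate by Proposition~\ref{prop:alternate}. Since $e_{i+1}-e_i=\alpha_i-1$, the sequence $e$ moves by $-1$ at each $0$-increment and by $+1$ at each $2$-increment, and is constant otherwise. Because the nonzero moves alternate in sign, $e$ takes at most two values, so $e$ is piecewise constant with values $\{c,c\pm1\}$ inside $\{-1,0,1\}$. Now combine this with the constraints:
\begin{enumerate}
\item[(i)] If $e_k=1$, then $e$ takes values in $\{0,1\}$. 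Since $\sum_i e_i\le 0$, every $e_i\le0$, contradicting $e_k=1$.
\item[(ii)] If $e_k=0$, then $e$ takes values in $\{-1,0\}$ or in $\{0,1\}$. In the second case $\sum_i e_i\ge0$ forces $\sum_i e_i=0$, so all $e_i=0$. In the first case $\sum_i e_i=-1$ requires exactly one index with $e_i=-1$ and $\gamma=k$.
\end{enumerate}

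The main obstacle is excluding that last configuration. Here $b$ equals $(1,\ldots,k)$ except that one entry $b_i=i-1$, and $b$ has a leading gap. If $i<k$, the increments around $i$ are $\alpha_{i-1}=0$ and $\alpha_i=2$. Applying Lemma~\ref{lem:block-delta} at positions $i-1,i$ (when $i\ge2$) gives a sequence of the same weight with $b_{i-1}'>b_i'$, so Corollary~\ref{cor:block_var} shows $b$ is not maximum weight. If $i=1$, then $b_1=0$, which is impossible for a block. If $i=k$, then $b=(1,\ldots,k-1,k-1)$ with $s_1=0$, which is exactly the configuration excluded in Case~1 of the proof of Proposition~\ref{prop:large_block} by the pivot argument, or more simply by $b_k\in\{k,k+1\}$. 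Hence $e\equiv0$ and $\gamma=k-1$, so $b=(1,2,\ldots,k)$ and $s$ is the perfect sequence. Its weight is $\sum_i i\,\T_i$ because block $i$ has width $i$ and height $h_i=\T_i$, by the recursion of Lemma~\ref{lem:height-seq} with $h_1=1$. Finally, existence of a maximum weight sequence (the set is finite and nonempty) gives $|\maxweightset{n}|=1$.
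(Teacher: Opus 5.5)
Your reduction is sound up to the residual configuration. Proposition~\ref{prop:large_block} together with the two-value structure of $e$ correctly gives $\sum_i e_i\in\{0,-1\}$, eliminates $e_k=1$, and forces $e\equiv 0$ when $\gamma=k-1$. That leaves $b=(1,\ldots,i-1,i-1,i+1,\ldots,k)$ with a leading gap. (A small point first: Proposition~\ref{prop:alternate} requires $i<j$, so it does not forbid two \emph{adjacent} increments of $2$. You need the one-line fact that $\delta=1$ in Lemma~\ref{lem:block-delta} at such a pair preserves weight and creates an increment of $3$. Adjacent zeros are handled by Corollary~\ref{cor:no_three_equal}.)

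The genuine gap is your treatment of $2\le i\le k-1$. Since $b_{i-1}=b_i=i-1$, the weight change $\delta(b_i-b_{i-1}-1-\delta)=-\delta(1+\delta)$ vanishes only for $\delta=-1$, and that gives $b'_{i-1}=i-2<b'_i=i$. At positions $(i,i+1)$ the only weight-preserving shift is $\delta=1$, which gives $b'_i=b'_{i+1}=i$. Neither produces a descent. In fact no argument built only from Lemma~\ref{lem:block-delta} and its corollaries can work here. Those moves keep the number of gaps fixed, so they cannot see the leading gap. Moreover, the same block sequence \emph{without} a leading gap really is of maximum weight for $n=\T_{k+1}-3$: for example $s=(1,0,3,0,5,5,5)\in\maxweightset{7}$ with $b=(1,1,3)$, which is Construction B of Theorem~\ref{thrm:structure}. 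Concretely, for $k=3$, $i=2$ your configuration is $s=(0,2,0,4,0,6,6,6)$. Its only same-weight neighbour under such moves is $(0,2,0,4,4,0,7,7)$, with block sequence $(1,2,2)$, which violates nothing.

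What is missing is a move that uses the leading gap, which is exactly the paper's pivot in its Case~2. Since $i\ge 2$, you have $b_1=1$, so $s$ begins $(0,2,0,\ldots)$. Replace this by $(1,1,0,\ldots)$. Both start the second block at index $4$, so all later heights are unchanged and the weight is preserved. The result is a maximal sequence with $\gamma=k-1$ and block sequence $(2,b_2,\ldots,b_k)$, so $e_1=1$. Your own analysis shows every maximum weight sequence with $\gamma=k-1$ has $e\equiv 0$, so this sequence, and hence $s$, is not of maximum weight.

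Alternatively, compute directly. The heights are $h_j=\T_j+1$ for $j\le i$ and $h_j=\T_j$ for $j>i$, so $w(s)=\sum_j j\T_j-1$, one less than the perfect sequence.

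With this repair your route is a valid alternative to the paper's. Using Proposition~\ref{prop:large_block} and the two-value structure of $e$ absorbs the paper's $b_1=2$ pivot (splitting the first block to get $k+1$ blocks). Only the $b_1=1$ pivot then remains.
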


\begin{proof}[Proof of Theorem~\ref{thrm:unique}]
Let $n=\T_{k+1}-2$, and suppose $s \in \maxweightset{n}$. 
Note that $s$ has $k$ blocks by Corollary~\ref{cor:perfect_blocks}.
We now show that the claimed block sequence is in fact unique. By $s$ being a maximum weight jump-float sequence with $k$ blocks, we know there are either $k-1$ or $k$ zeros in $s$ by Lemma~\ref{lem:alg}.

\textbf{Case 1:} There are $k-1$ zeros in $s$. By Proposition~\ref{prop:block_index}, we know $b_i=i+e_i$ where $e_i \in \{-1,0,1\}$ for all $i$. Algebraic manipulation yields
\begin{align*}
n & = k-1 +\sum_{i=1}^k b_i \\ 
\T_{k+1}-2 & =k+1-2 +\sum_{i=1}^k i+\sum_{i=1}^k e_i \\
\T_{k+1}-2 & = \T_{k+1}-2 +\sum_{i=1}^k e_i \\
0 &= \sum_{i=1}^k e_i.
\end{align*}
We claim that $e_i=0$ for all $i$, and proceed by contradiction. Assuming that some $e_i$ are nonzero, then there exist indices $i,j$ such that $e_i=1$, $e_j=-1$, and $e_\ell =0$ for all indices between these locations.  Consider the case if $i<j$. Then $b_i=b_{i+1}=i+1$ and $b_j=b_{j-1}=j-1$ and $b_\ell=\ell$ for all $i+1\leq \ell \leq j-1$. However, this contradicts Proposition~\ref{prop:alternate} part (a). A parallel argument exists if $j<i$, contradicting Proposition~\ref{prop:alternate} part (b). As such, $e_i=0$ for every index. This implies that the block sequence is unique: $b=(1,2,\ldots k)$.
 
\textbf{Case 2:} There are $k$ zeros in $s$, meaning $s_1=0$. By Proposition~\ref{prop:block_index}, we know $b_1 \in \{1,2\}$. 

If $b_1=2$, we pivot to a new jump-float sequence $s'$ with block sequence $b'=(b'_1,\ldots b'_{k+1})$, where 
\[
b'_j=\left\{\begin{array}{c l} 1 & \text{for } j=1,2, \\ b_{j-1} & \text{for } 3 \leq j \leq k+1. \end{array}\right.
\] 
See Figure~\ref{fig:unique_pivot_one} to see the first few entries of $s$ and $s'$.
We see the weight of $s'$ is the same as the weight of $s$, but this contradicts Corollary~\ref{cor:perfect_blocks}, because $s'$ has $k+1$ blocks.
\begin{figure}[ht!]
    \centering
\begin{tikzpicture}[scale=0.6]
        \tri[(-3, -0.5)]{0,2,2,0}{violet!50};
        \node at (-2,0) {$\cdots$};
        \node (eq) at (-3,-1.5) {\setlength{\arraycolsep}{1pt} \renewcommand{\arraystretch}{1.2}$\begin{array}{rcl} s&=&( 0,\textcolor{violet}{2,2},0,\ldots) \\ b&=&(\textcolor{violet}{2},\ldots) \end{array}$};

        \tri[(3,-0.5)]{1,0,0,0}{violet!50};
        \tri[(3,-0.5)]{0,0,3,0}{teal!50};
        \node at (4,0) {$\cdots$};
        \node (eq) at (3,-1.5) {\setlength{\arraycolsep}{1pt} \renewcommand{\arraystretch}{1.2}$\begin{array}{rcl} s'&=&(\textcolor{violet}{1},0,\textcolor{teal}{3},0,\ldots ) \\ b'&=&(\textcolor{violet}{1},\textcolor{teal}{1},\ldots) \end{array}$};

\end{tikzpicture}
    \caption{Pivoting $s$ to $s'$ by breaking up the first block. Note that the weight does not change.}
    \label{fig:unique_pivot_one}
\end{figure}

If $b_1=1$, we pivot to a new jump-float sequence $s''$ with block sequence $b''=(b''_1,\ldots b''_{k})$, where
\[
b''_j=\left\{\begin{array}{c l} 2 & \text{for } j=1, \\ b_{j} & \text{for } 2 \leq j \leq k. \end{array}\right.
\] 
See Figure~\ref{fig:unique_pivot_two} to see the first few entries of $s$ and $s''$.
We see the weight of $s''$ is the same as the weight of $s$, and that $s''$ has $k-1$ zeros. However, this contradicts our work in Case 1, because $s''$ is not the proven construction. As such, $s$ cannot be maximum weight. 

\begin{figure}[ht!]
    \centering
\begin{tikzpicture}[scale=0.6]
        \tri[(-3, -0.5)]{0,2,0}{violet!50};
        \node at (-2.1,0) {$\cdots$};
        \node (eq) at (-3,-1.5) {\setlength{\arraycolsep}{1pt} \renewcommand{\arraystretch}{1.2}$\begin{array}{rcl} s&=&(0, \textcolor{violet}{2},0, \ldots) \\ b&=&(\textcolor{violet}{1},\ldots)
        \end{array}$};

        \tri[(3,-0.5)]{1,1,0}{violet!50};
        \node at (3.9,0) {$\cdots$};        
        \node (eq) at (3,-1.5) {\setlength{\arraycolsep}{1pt} \renewcommand{\arraystretch}{1.2}$\begin{array}{rcl} s''&=&(\textcolor{violet}{1,1},0,\ldots)  \\ b''&=&(\textcolor{violet}{2},\ldots)
        \end{array}$};

\end{tikzpicture}
    \caption{Pivoting $s$ to $s''$ by widening the first block. Note that the weight does not change.}
    \label{fig:unique_pivot_two}
\end{figure}

Thus, in all cases, the maximum weight jump-float sequence is as claimed. Further, it is unique. Finally, we note that the weight of this sequence immediately follows by definition, and we refer back to Definition~\ref{def:perfect} for a closed form. 
\end{proof}

\subsection{Proof of Theorem~\ref{thrm:correct}}

\label{sec:proof-correct}

With the structural groundwork set, we are now ready to prove our main result about the enumeration and weight of the maximum weight jump-float sequences.

\begin{theorem*}[Theorem~\ref{thrm:correct}]
    For $n \geq 1$, the number of maximum weight sequences is
\begin{equation}
\label{eqn:correct-count}
     |\maxweightset{n}|
    =
    \binom{\row{n}}{\ind{n}}+\binom{\row{n}+1}{\ind{n}-1}  
\end{equation}

    with weight
\begin{equation}
\label{eqn:correct-weight}
     \maxweight{n}=\T_{n-\row{n}} + \sum_{i=1}^{\row{n}}  \T_i
     =\sum_{j=1}^{n-\row{n}} j + \sum_{i=1}^{\row{n}}  \T_i,
\end{equation}
    where $\T_i = {i+1 \choose 2}$ is the $i$th triangular number, and $\rho_n$ and $\lambda_n$ are defined in Lemma~\ref{lem:row-ind}.
\end{theorem*}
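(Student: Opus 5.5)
The proof goes by strong induction on $n$, as the roadmap announces. The plan is to establish the Pascal-like recurrence displayed in the trapezoid \eqref{eqn:K-trapezoid} by deleting the last block of a maximum weight sequence. Let $b=(b_1,\ldots,b_k)\in\mwblockset{n}$ with $k=\row{n}$ or $k=\row{n}+1$ (Corollaries~\ref{cor:perfect_blocks} and~\ref{cor:row_to_blocks}). By Proposition~\ref{prop:large_block}, the last entry satisfies $b_k\in\{\row{n},\row{n}+1\}$. Removing the last block together with its preceding gap gives a maximal block sequence $b^-=(b_1,\ldots,b_{k-1})$ of size $n'=n-b_k-1$.

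The key lemma is that $b^-$ is again of maximum weight for size $n'$. Conversely, every element of $\mwblockset{n'}$ extended by a final block of width $b_k$ is maximum weight for size $n$, provided the extension respects the allowed last-block values. To prove this, I would write the weight as $w(b)=w(b^-)+b_k h_k$, where $h_k=n-b_k+1$ is the starting index of the last block, so that $h_k$ depends only on $n$ and $b_k$. Hence, for fixed $b_k$, maximizing $w(b)$ is the same as maximizing $w(b^-)$ over $\maxset{n'}$. The subtlety is that an optimal $b^-$ must be compatible with a last block of width $b_k$; I would use Corollary~\ref{cor:block_var} and Proposition~\ref{prop:large_block} applied at size $n'$ to check that any maximizer for $n'$ has last entry in $\{b_k-2,\ldots,b_k\}$. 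Since the weight decomposition is additive, this compatibility costs nothing anyway.

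Next I would compute where $n'$ sits in the trapezoid. With $k'=\row{n}$, $\ind{n}=\ell$ and $n=\T_{k'+1}-2+\ell$, removing a last block of width $k'+1$ gives $n'=\T_{k'+1}-2+\ell-k'-2=\T_{k'}-2+(\ell-1)$, which is row $k'-1$, column $\ell-1$. Removing a block of width $k'$ gives $n'=\T_{k'}-2+\ell$, which is row $k'-1$, column $\ell$. One must handle the boundary cases where $\ell$ or $\ell-1$ falls outside $[0,k']$, and check that this matches Theorem~\ref{thrm:unique} at $\ell=0$. Both choices of $b_k$ then give the candidate weights $w^*_{n'}+b_k(n-b_k+1)$. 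Using the inductive formula \eqref{eqn:correct-weight} for $w^*_{n'}$, I would verify that the two candidates are equal exactly when both contribute (interior columns), and that one strictly dominates at the ends. The resulting count is $|\maxweightset{n}|=|\maxweightset{n'_1}|+|\maxweightset{n'_2}|$, the trapezoid Pascal rule, and this is satisfied by $\binom{\row{n}}{\ind{n}}+\binom{\row{n}+1}{\ind{n}-1}$ via Pascal's identity on each summand. The weight formula follows by substituting $n'$ and simplifying with $\T$ identities.

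I expect the main obstacle to be the weight comparison between the two removal options, because the rows have length $k'+2$ and not $k'+1$. The columns $\ell=k'+1$, and the transition at the start of a row, need separate bookkeeping. My first attempt would be to compute $w^*_{n}-w^*_{n-1}=n-\row{n}$ from \eqref{eqn:correct-weight} whenever $\row{n}=\row{n-1}$, and then compare the two candidates directly via this increment.
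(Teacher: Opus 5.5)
Your plan is the paper's proof: strong induction on $n$, deleting the last block (of width $\row{n}$ or $\row{n}+1$ by Proposition~\ref{prop:large_block}) together with its gap, recognizing the truncated sizes as the two upper neighbours in the trapezoid, showing the candidate weights $\maxweight{n'}+b_k(n-b_k+1)$ coincide, and finishing with Pascal's identity; your decomposition $w(b)=w(b^-)+b_k(n-b_k+1)$ just makes explicit a step the paper asserts. One correction to your predicted outcome: the two candidates tie for every $1\le\ind{n}\le\row{n}+1$, including the right end, where the wrapped neighbour $n'=\T_{\row{n}+1}-2$ contributes the perfect sequence and gives the count $1+\row{n}$, so strict domination (by exactly $1$, in favour of the width-$\row{n}$ block) happens only at $\ind{n}=0$.
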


\begin{proof}
We prove the theorem by strong induction on the size $n$. As per Lemma~\ref{lem:row-ind}, let $\row{n}$ and $\ind{n}$ be the unique positive integers such that
$$
n = \T_{\row{n} +1} - 2 + \ind{n}.
$$
where $\T_j = {j+1 \choose 2}$ is the $j$th triangular number and $0 \leq \ind{n} \leq \row{n}+1$. It is straightforward to check that the theorem holds for $1 \leq n < \T_3 -2 = 4$, see Figure~\ref{fig:main-base-case}.

\begin{figure}[ht!]
\centering
\begin{tikzpicture}[scale=0.8]

\begin{scope}[shift={(-2,0)}]
    \node at (0,1.8) {$n:$};
    \node at (0,0.5) {$\maxweightset{n}:$};
    \node at (0,-0.75) {$(\row{n}, \ind{n}):$};
\end{scope}

\begin{scope}[shift={(0,0)}]
    \node at (0,1.8) {$1$};
    \tri[(0.2, 0)]{1}{violet!50};
    \node at (0,-0.75) {$(1, 0)$};
\end{scope}

\begin{scope}[shift={(3,0)}]
\node at (0,1.8) {$2$};
 \tri[(-0.6, 0)]{1,1}{violet!50};   
 \tri[(0.6, 0)]{0,2}{teal!50}; 
     \node at (0,-0.75) {$(1, 1)$};
\end{scope}

\begin{scope}[shift={(7,0)}]
\node at (0,1.8) {$3$};
\tri[(-.8, 0)]{1,0,0}{violet!50};
\tri[(-.8, 0)]{0,0,3}{teal!50};
\tri[(.8, 0)]{0,2,2}{teal!50};  
    \node at (0,-0.75) {$(1, 2)$};
\end{scope}

\begin{scope}[shift={(11,0)}]
\node at (0,1.8) {$4$};
 \tri[(0, 0)]{1,0,0,0}{violet!50};
 \tri[(0, 0)]{0,0,3,3}{teal!50};
     \node at (0,-0.75) {$(2, 0)$};
\end{scope} 
\end{tikzpicture}

\caption{The base cases $1 \leq n \leq 4$ for Theorem~\ref{thrm:correct}. Checking that equations \eqref{eqn:correct-count} and \eqref{eqn:correct-weight} both hold is elementary.}
\label{fig:main-base-case}
\end{figure}

First, let us consider the cases when $\ind{n}=0$, so that $n= \T_{\row{n}+1} - 2$. In other words, we restrict ourselves to the perfect sequences corresponding to $n \in \{ \T_{k+1} -2 : k \in \mathbb{N} \}$, and proceed by induction on $k := \row{n}$.
By Theorem~\ref{thrm:unique}, 
we have $|\maxweightset{n}|=1$, and equation~\eqref{eqn:correct-count} becomes
$$
{k \choose 0} + {k+1 \choose -1} = 1.
$$
Next, Theorem~\ref{thrm:unique} tells us that $\maxweight{n} = \sum_{i=1}^{k} i \T_i$, and we must show that this equals the value of the weight equation \eqref{eqn:correct-weight}. 
Assume that this weight equation holds for $n=\T_{k+1}-2$, namely
$$
\sum_{i=1}^k i \T_i
%= \T_{\T_{k+1}-2-k} + \sum_{i=1}^k \T_i
= \sum_{j=1}^{\T_{k+1}-2-k} j + \sum_{i=1}^k \T_i.
$$
Now considering equation~\eqref{eqn:correct-weight} for $n=\T_{k+2}-2$, we have
\begin{align*}
%\T_{\T_{k+2} -2 - (k+1)} + \sum_{i=1}^{k+1} \T_i 
%&=
\sum_{j=1}^{\T_{k+2} -2 -(k+1)} j  +  \sum_{i=1}^{k+1} \T_i  
 &=  \left(  \sum_{j=1}^{\T_{k+1}-2-k} j + \sum_{i=1}^{k}\T_i  \right)
 + \left( \T_{k+1} + \sum_{j=\T_{k+1} -2- k+1}^{\T_{k+2} -2 -(k+1)} j  \right) \\
 &= \sum_{i=1}^{k} i \T_i + \left(
 \T_{k+1} + (\T_{k+1} -k-2)(k+1) + 
 \sum_{j=1}^{k+1} j  \right) \\
 %&= \sum_{i=1}^{k} i \T_i + \left(
 %\T_{k+1} + (\T_{k+1} -k-2)(k+1) + 
 %\T_{k+1} \right) \\ 
  &= \sum_{i=1}^{k} i \T_i + 
 (k+3) \T_{k+1} - (k+2)(k+1)    \\ 
   &= \sum_{i=1}^{k} i \T_i + 
 (k+1) \T_{k+1}  \quad = \quad  \sum_{i=1}^{k+1} i \T_i,
\end{align*}  
where we used induction in the second line. So both the count formula and the weight formula are correct for
$n \in \{ \T_{k+1} -2 : k \in \mathbb{N} \}$.

Next, we consider the cases when $\ind{n} \neq 0$. Consider a sequence $s \in \maxweightset{n}$ where
$n = \T_{\row{n}+1}-2 + \lambda_n$. By Proposition~\ref{prop:large_block}, the final entry of the block sequence is either $\row{n}$ or $\row{n}+1$.

{\bf Case 1:} the final block sequence entry is $\row{n}$. Let $s'$ be the sequence obtained by removing the last $\row{n}+1$ entries of $s$. Since $s \in \maxweightset{n}$, we must have $s' \in \maxweightset{n'}$ where 
$$
n' = n - (\row{n}+1) = \T_{\row{n}+1}-2 + \lambda_n - (\row{n}+1) = \T_{\row{n}}-2 + \lambda_n.
$$
By induction, there are
$$
|\maxweightset{n'}| = {\row{n}-1 \choose \ind{n}} +
{\row{n} \choose \ind{n}-1}
$$
choices for subsequence $s'$. Furthermore, the weight of $s$ is
\begin{align}
\nonumber
w(s) &= \row{n}(n-\row{n}+1) + w(s')  = \row{n}(n-\row{n}+1) + \maxweight{n-(\row{n}+1)}  \\
\label{eqn:weight-first-part-k}
&=
\row{n}(n-\row{n}+1) + \sum_{j=1}^{n-2\row{n}} j  + \sum_{i=1}^{\row{n}-1} \T_i 
\end{align}
which expands and simplifies to 
\begin{align*}
 & \quad \sum_{i=1}^{\row{n}} \T_i - \frac{\row{n}(\row{n}+1)}{2} + \row{n}(n-\row{n}+1) + \sum_{j=1}^{n-2\row{n}} j \\
  &= \sum_{i=1}^{\row{n}} \T_i + \row{n}(n-2\row{n}+1) + \frac{\row{n}(\row{n}-1)}{2} + \sum_{j=1}^{n-2\row{n}} j \\
  &= \sum_{i=1}^{\row{n}} \T_i + \row{n}(n-2\row{n}+1) + \sum_{j=0}^{\row{n}-1} j + \sum_{j=1}^{n-2\row{n}} j \\
  &= \sum_{i=1}^{\row{n}} \T_i + \sum_{j=n-2\row{n}+1}^{n-\row{n}} j + \sum_{j=1}^{n-2\row{n}} j \\  
  &= \sum_{i=1}^{\row{n}} \T_i + \sum_{j=1}^{n-\row{n}} j,  
\end{align*} 
as required for equation~\eqref{eqn:correct-weight}.

{\bf Case 2:} the final block sequence entry is $\row{n}+1$. Let $s''$ be the sequence obtained by removing the last $\row{n}+2$ entries of $s$. Since $s \in \maxweightset{n}$, we must have $s'' \in \maxweightset{n''}$ where 
$$
n'' = n - (\row{n}+2) = \T_{\row{n}+1}-2 + \lambda_n - (\row{n}+2) = \T_{\row{n}}-2 + (\lambda_n -1).
$$
By induction, there are
$$
|\maxweightset{n''}| = {\row{n}-1 \choose \ind{n}-1} +
{\row{n} \choose \ind{n}-2}
$$
choices for subsequence $s''$. We show that the weight of a sequence in this case 
\begin{align}
\nonumber
 w(s) 
 &= (\row{n}+1)(n-\row{n})+w(s'') = (\row{n}+1)(n-\row{n})+\maxweight{n-(\row{n}+2)}   \\
\label{eqn:weight-first-part-k-plus-one}
 &= (\row{n}+1)(n-\row{n}) +  \sum_{i=1}^{\row{n}-1}  \T_i + \sum_{j=1}^{n-2\row{n}-1} j 
\end{align}
equals the weight of a sequence in the first case.
Subtracting equation~\eqref{eqn:weight-first-part-k-plus-one} from equation~\eqref{eqn:weight-first-part-k} yields
$$
\row{n} (n- \row{n}+1) + n- 2\row{n} - (\row{n}+1) (n- \row{n}) = 0.
$$

Since the weights of the sequences constructed by these two cases are equal, we have shown that
\begin{align*}
    |\maxweightset{n}| = 
    |\maxweightset{n'}| + |\maxweightset{n''}| 
    &=
    {\row{n}-1 \choose \ind{n}} + {\row{n} \choose \ind{n}-1} 
    + 
    {\row{n}-1 \choose \ind{n}-1} + {\row{n} \choose \ind{n}-2} \\
    &=
    {\row{n} \choose \ind{n}} + {\row{n}+1 \choose \ind{n}-1}
\end{align*}
as desired. 
\end{proof}

\subsection{OEIS sequence verification}
\label{sec:oeis_ver}

We now have closed formulas for the sequences $|\maxweightset{n}|$ and $\maxweight{n}$. In this section, we confirm the claims made in Section~\ref{sec:intro} associating $|\mathcal{Z}_{n+1}(\pattern)| = |\maxweightset{n}|$ to OEIS sequence A209561 and $\maxinv \permset{n+1}(\pattern) = \maxweight{n}$ to OEIS sequence A023536.

From the proof of Theorem~\ref{thrm:correct}, note that the equality \[|\maxweightset{n}| = |\maxweightset{n'}| + |\maxweightset{n''}|\] 
is the Pascal-like recurrence that we observed in trapezoid \eqref{eqn:K-trapezoid}. This confirms that our sequence $\{ | \maxweightset{n} | \}$ matches OEIS sequence A209561, since the entries of that sequence also adhere to this rule. 

The proof for $\maxweight{n}$ is algebraic.

\begin{theorem}
    \label{thrm:oeis_weight}
    The sequence $\maxweight{n}$ defined in Theorem~\ref{thrm:correct} matches OEIS sequence A023536.
\end{theorem}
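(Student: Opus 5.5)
The plan is to compare second differences. I would first pin down the defining description of A023536 in a form we can check against Theorem~\ref{thrm:correct}. The entry is a convolution involving the characteristic sequence A023533 of the tetrahedral numbers ${m+2 \choose 3}$. Concretely, I expect to show that its terms $a(n)$ satisfy $a(1)=1$, $a(2)=2$ and a second-difference rule: $\Delta^2 a$ equals $1$ except at one position per "tetrahedral block", where it equals $0$. The listed values $1,2,4,7,10,14,19,25,31,\ldots$ have first differences $1,2,3,3,4,5,6,6,7,\ldots$, which fits this rule. With that characterization, it suffices to show that $\maxweight{n}$ has the same initial values and the same first differences. By induction the two sequences then agree.

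So the core step is to compute $\maxweight{n}-\maxweight{n-1}$ from equation~\eqref{eqn:correct-weight}, namely $\maxweight{n}=\T_{n-\row{n}}+\sum_{i=1}^{\row{n}}\T_i$. I would split into two cases according to whether $n-1$ and $n$ lie in the same row of the size trapezoid $S$.

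\textbf{Case 1: same row} ($\row{n-1}=\row{n}=k$, i.e.\ $\ind{n}\ge 1$). The sums over $\T_i$ cancel, and the difference is $\T_{n-k}-\T_{n-1-k}=n-k$. Within a row the first differences therefore increase by exactly one each step.

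\textbf{Case 2: row change} ($n=\T_{k+1}-2$, so $\row{n}=k$ and $\row{n-1}=k-1$). The difference is
$$\T_{n-k}+\T_k-\T_{n-k} = \T_k.$$
Here I use that $n-1-(k-1)=n-k$, so the two $\T_{n-k}$ terms cancel exactly. On the other hand, the last step inside row $k-1$ has difference $(n-1)-(k-1)=n-k=\T_{k+1}-2-k=\T_k-1$. So at the boundary the first difference again increases by exactly one, to $\T_k$. The next step, into $\ind{n}=1$, has difference $n+1-k=\T_k$ again. Hence $\Delta^2=0$ exactly once per row, at the transition from column $0$ to column $1$, and $\Delta^2=1$ everywhere else.

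The zeros of $\Delta^2\maxweight{n}$ therefore sit at $n=\T_{k+1}-1$ for $k\ge1$. The last step is to translate these positions, via the index shift between $n$ and the OEIS offset, into the positions of the tetrahedral (or related) numbers in the definition of A023536. Then I check the initial terms $\maxweight{1}=1$ and $\maxweight{2}=2$.

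I expect the main obstacle to be this bookkeeping rather than the algebra. The OEIS entry is phrased as a convolution with a particular offset, and the positions where A023533 is $1$ are governed by tetrahedral numbers, whereas our zeros of $\Delta^2$ are governed by triangular numbers. So the convolution has to be unwound twice, since convolving with the natural numbers amounts to a double partial sum. My first attempt would be to write $a(n)=\sum_k (n-k)\,c(k)$ with $c$ the relevant indicator sequence. Applying $\Delta^2$ to this recovers $c$. I would then match $c$ against $1-[\,n=\T_{k+1}-1\,]$, adjusting the offset until the first few terms agree with \eqref{eqn:weight-seq}.
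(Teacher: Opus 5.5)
Your difference computation is correct. When $\ind{n}\ge 1$ you get $\maxweight{n}-\maxweight{n-1}=n-\row{n}$. At $n=\T_{k+1}-2$ you get $\T_k$, which is one more than the last difference in row $k-1$. Hence $\Delta^2\maxweight{n}=1$ except for a single $0$ at $n=\T_{k+1}-1$ in each row of $S$.

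The weak point is your description of A023536, and the reconciliation you propose would not work. The entry is not built from the tetrahedral indicator A023533. It is the convolution of the natural numbers with A023532, the sequence equal to $0$ when $n=m(m+3)/2$ and $1$ otherwise. Had it really been a convolution with A023533, then, as you say yourself, $\Delta^2$ would return that indicator. The second differences would then be $0$ everywhere except at the sparse tetrahedral positions. That contradicts the data you quoted, whose second differences are $1,0,1,1,0,1,1,1,0,\ldots$. No ``second unwinding'' or offset shift can turn an indicator that is mostly $0$ into one that is mostly $1$.

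With the correct definition there is nothing to adjust. Since $m(m+3)/2=\T_{m+1}-1$, the second difference of the entry is exactly $1-[\,n=\T_{k+1}-1\,]$ in the indexing $a(1)=1$ used in the paper. This matches your zeros, and $a(1)=1$, $a(2)=2$ close the induction.

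With that fix, your argument is a genuinely different route from the paper's. The paper never uses the convolution definition. It starts from the closed formula $a_n=\binom{n+2}{2}-\sum_{j=1}^{n+1}\kappa_j$ quoted from the entry, with $\kappa_j=\lfloor(\sqrt{8j+1}-1)/2\rfloor$. It then groups the $\kappa_j$ by value and reindexes via $\kappa_{n+1}=\row{n-1}+1$. It treats separately the boundary case $n=\T_{\row{n}+1}-2$, where $\row{n-1}\neq\row{n}$. Finally it expands to reach $\binom{n-\row{n}+1}{2}+\binom{\row{n}+2}{3}$.

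Working at the level of first differences buys you several things:
\begin{itemize}
\item you avoid the floor-function bookkeeping and the cubic algebra;
\item you rest on the entry's defining property rather than a secondary formula;
\item you turn the paper's closing heuristic about the difference sequence $d_n$ into the actual proof, with the repeated difference being the step from column $0$ to column $1$ of each row.
\end{itemize}
What the paper's computation gives that yours does not is a direct identity between two closed forms, with no induction needed.
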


\begin{proof}
    To prove the claim, we begin with a formula stated for sequence A023536:
    \[a_n=\frac{(n+1)(n+2)}{2}-\sum_{j=1}^{n+1}\left\lfloor \frac{\sqrt{8j+1}-1}{2}\right\rfloor.\]

    We define $\kappa_j=\left\lfloor \frac{\sqrt{8j+1}-1}{2}\right\rfloor$, and straightforward algebra shows $\kappa_j=\max\{i : \T_i \leq j\}$. For any integer $m$, there are $m+1$ values $j$ such that $\kappa_j=m$, specifically $\{\T_m, \T_{m}+1,\ldots, \T_{m+1}-1\}$. 

    With this understanding of $\kappa_j$, we aim to rewrite the sum in the formula for $a_n$ without $\kappa_j$ in the summand. Every term $k$ will appear $k+1$ times, except for possibly the largest value, which appears $n+1-(\T_{\kappa_{n+1}}-1)$ times; specifically, for $j$ values $\{\T_{\kappa_{n+1}},\T_{\kappa_{n+1}}+1,\ldots, n+1\}$. As such, we rewrite $a_n$ as
    \[a_n = \frac{(n+1)(n+2)}{2}- \sum_{k=1}^{\kappa_{n+1}-1}k(k+1) - (\kappa_{n+1})(n+1-(\T_{\kappa_{n+1}}-1)).\]

    Here we begin pivoting to our formula for $\maxweight{n}$. A change in variables yields 
    $\kappa_{n+1}=\row{n-1}+1$ which we use to rewrite $a_n$ as
    \[a_n = \frac{(n+1)(n+2)}{2}-\sum_{k=1}^{\row{n-1}}k(k+1) - (\row{n-1}+1)(n+2-\T_{\row{n-1}+1}).\]

    We claim the formula can be rewritten as 
    \[a_n = \frac{(n+1)(n+2)}{2}-\sum_{k=1}^{\row{n}}k(k+1) - (\row{n}+1)(n+2-\T_{\row{n}+1}).\] When $n\neq \T_{\row{n}+1}-2$, the formula is immediate, as $\row{n}=\row{n-1}$. When $n=\T_{\row{n}+1}-2$, we know $\row{n}=\row{n-1}+1$, and as such
    \begin{align*}
        \sum_{k=1}^{\row{n-1}}k(k+1)& + (\row{n-1}+1)(n+2-\T_{\row{n-1}+1}) 
         = \sum_{k=1}^{\row{n}-1}k(k+1) + (\row{n})(\T_{\row{n}+1}-\T_{\row{n-1}+1}) 
        \\ & = \sum_{k=1}^{\row{n}-1}k(k+1) +(\row{n})(\row{n}+1) = \sum_{k=1}^{\row{n}}k(k+1) 
        \\ &= \sum_{k=1}^{\row{n}}k(k+1) + (\row{n}+1)(n+2-\T_{\row{n}+1})
    \end{align*}
    where the final equality holds because the added term is zero. Thus, the claimed formula for $a_n$ works in both cases.

    What remains is routine, if careful, algebra:
    \begin{align*}
        a_n & = \frac{(n+1)(n+2)}{2}-\sum_{k=1}^{\row{n}}k(k+1) - (\row{n}+1)(n+2-\T_{\row{n}+1}) \\ 
        & = \frac{(n+1)(n+2)}{2} - 2 \binom{\row{n}+2}{3} - (\row{n}+1)\left(n+2-\frac{(\row{n}+2)(\row{n}+1)}{2}\right) \\
        & = \frac{n^2}{2}-n\row{n}+\frac{n}{2}+\frac{\row{n}^3}{6}+\row{n}^2-\frac{\row{n}}{6} \\
        %& = \frac{n^2}{2}-\frac{2n\row{n}}{2}+\frac{\row{n}^2}{2}+\frac{n}{2}-\frac{\row{n}}{2} + \frac{\row{n}^3}{6}+\frac{3\row{n}^2}{6}+\frac{2\row{n}}{6} \\
        & = \frac{(n-\row{n}+1)(n-\row{n})}{2} + \frac{\row{n}(\row{n}+1)(\row{n}+2)}{6}\\ 
        & = \binom{n-\row{n}+1}{2} + \binom{\row{n}+2}{3}  \\
        & = \T_{n-\row{n}} + \sum_{i=1}^{\row{n}} \T_i = \maxweight{n}
    \end{align*}
    where the formula $\sum_{i=1}^{\row{n}} \T_i = \binom{\row{n}+2}{3} $ is from OEIS sequence A000292 \cite{oeis}.
\end{proof}

We conclude this section with an intuitive interpretation of the maximum weight formula $\maxweight{n}=\T_{n-\row{n}} + \sum_{i=1}^{\row{n}}  \T_i$. Setting $\maxweight{0}=0$ for convenience,  consider the difference sequence
$$
\{ d_n \}_{n \geq 1} := \{ \maxweight{n} - \maxweight{n-1} \}_{n \geq 1} = 1,1,2,3,3,4,5,6,6,7,8,9,10,10,11, 12, 13,14, \ldots.
$$
It is clear that $\maxweight{n} = \sum_{j=1}^n d_n$.  
The $d_j$ terms for $j=\T_{k+1}-2$ (which correspond to the perfect sequences) constitute the $\sum_{i=1}^{\row{n}}  \T_i$ summand. The remaining terms constitute the $\T_{n-\row{n}} = \sum_{i=1}^{n-\row{n}} i$ summand.

\subsection{Construction of maximum weight jump-float sequences}
\label{sec:construct}

%With the enumeration of $\maxweightset{n}$ in hand, we now give a construction method for their block sequences.

In the previous section, we enumerated $\maxweightset{n}$ via an inductive proof that uses the Pascal-like recurrence of trapezoid \eqref{eqn:K-trapezoid}. We now complement that result by offering a constructive method that gives explicit meaning to the formula $|\maxweightset{n}| = {\row{n} \choose \ind{n}} + {\row{n}+1 \choose \ind{n}-1}$.
%This second constructive method alters the two nearby perfect sequences to obtain the sequences of $\maxweightset{n}$.
This method creates all of the sequences of $\maxweightset{n}$, located at $(\row{n}, \ind{n})$, by either incrementing exactly $\ind{n}$ blocks, each by one, of the perfect sequence located at $(\row{n},0)$, or decrementing exactly $\row{n}-\ind{n}+2$ blocks, each by one, of the perfect sequence located at $(\row{n}+1,0)$. These two disjoint cases, along with the number of ways to choose which blocks, naturally explain the proven formula for $|\maxweightset{n}|$. 

See Figure~\ref{fig:construction_example2} for an example of this process where $n=5$. Note that $\row{n}=2$ and $\ind{n}=1$, and the perfect sequences considered at  $(\row{n},0)$ and  $(\row{n}+1,0)$ are lengths four and eight, respectively. 

\begin{figure}[h!]
\label{fig:construction_example2}
    \centering
    \begin{tikzpicture}[scale=0.6]
    \node at (-4,4) {\begin{tabular}{c} \underline{Construction A:} \\ Add 1 column to \\ the triangle for $n=4$ \end{tabular}};
    \tri[(-4,1)]{1,0,0,0}{violet!50}; 
    \tri[(-4,1)]{0,0,3,3}{teal!50}; 
    \node (las) at (-4,1) {};

    \tri[(-5.75,-3)]{1,1,0,0,0}{violet!50};
    \tri[(-5.75,-3)]{0,0,0,4,4}{teal!50};
    \node at (-6.8,-4) {\begin{tabular}{r} Add 1 column \\ to first block \end{tabular}};
    \node (lae1) at (-6,-1.5) {};

    \tri[(-2,-3)]{1,0,0,0,0}{violet!50};
    \tri[(-2,-3)]{0,0,3,3,3}{teal!50};
    \node at (-1.3,-4) {\begin{tabular}{l} Add 1 column \\ to second block \end{tabular}};
    \node (lae2) at (-2,-1.5) {};

    \draw[->] (las) -- (lae1);
    \draw[->] (las) -- (lae2);

    \node at (7,4) {\begin{tabular}{c} \underline{Construction B:} \\ Remove 3 columns \\ from the triangle for $n=8$ \end{tabular}};
    \tri[(7,-0.5)]{1,0,0,0,0,0,0,0}{violet!50};
    \tri[(7,-0.5)]{0,0,3,3,0,0,0,0}{teal!50};
    \tri[(7,-0.5)]{0,0,0,0,0,6,6,6}{blue!50};
    \node (ras) at (7,-0.5) {};

    \tri[(7,-3)]{0,2,0,0,0}{teal!50};
    \tri[(7,-3)]{0,0,0,4,4}{blue!50};
    \node at (7,-4) {\begin{tabular}{c} Remove 1 column \\ from each block \end{tabular}};
    \node (rae) at (7,-1.5) {};

    \draw[->] (ras) -- (rae);
    
\end{tikzpicture}
    \caption{Building maximum weight jump-float sequences of length 5 by adding one column to a block of the perfect sequence of length 4 and removing three columns from the perfect sequence of length 8. These operations are labeled Construction A and B to parallel Theorem~\ref{thrm:structure}.}
\end{figure}

%Indeed, given $n= \T_{\row{n}+1}-2 + \ind{n}$, the two nearest perfect sequences appear at $n^- = \T_{\row{n}+1}-2$ and $n^+ = \T_{\row{n}+2}-2$. The perfect sequence $p^-$ for $n^-$ has $\row{n}$ blocks and the perfect sequence $p^+$ for $n^+$ has $\row{n}+1$ blocks. We will see that the sequences of $\maxweightset{n}$ are constructed by either incrementing $\ind{n}$ blocks of $p^-$, or decrementing $\row{n}-\ind{n}+2$ blocks of $p^+$.  There are ${\row{n} \choose \ind{n}}$ ways to do the former, and ${\row{n}+1 \choose \row{n}+2-\ind{n}} = {\row{n}+1 \choose \ind{n}-1}$ to do the latter. 
This constructive method explicitly identifies when a block sequence has a leading gap. Therefore, in order to proceed, we must define an \emph{extended block sequence}.

\begin{definition}
    Let $s = (s_1, s_2, \ldots, s_n) \in \maxweightset{n}$ with block sequence $b = (b_1, b_2, \ldots, b_k) \in \mwblockset{n}$. The \emph{extended block sequence} $\overline{b}$ for $s$ is defined as follows.
    If $s_1=1$ then  $\overline{b} = b$. 
    If $s_1=0$ then $\overline{b} = 
    (\overline{b}_1, \overline{b}_2, \overline{b}_3, \ldots, \overline{b}_{k+1}) := 
    (0, b_1, b_2, \ldots, b_k)$.
    The set of extended block sequences is denoted $\extblockset{n}$.
\end{definition}

% In other words, the extended block $\overline{b}$ sequence prepends a zero to block sequence $b$ when the corresponding jump-float sequence $s$ has a leading gap $s_1=0$. The necessity of the extended block sequence is highlighted in Figure~\ref{fig:block}. Note that these two sequences of different lengths yield the same block sequence, but their extended block sequences differentiate them. In fact, extended block sequences are in one-to-one correspondence with maximum weight jump-float sequences.
% \todo{If we keep my block sequence changes, then the ``necessity" isn't to distinguish these two cases. The necessity comes from the constructive method itself, which could end up with $b_1=0$.}

See Figure~\ref{fig:block} for an example of the extended block sequence of two jump-float sequences.

\begin{figure}[h!]
\centering
\begin{tikzpicture}[scale=0.6]

\tri[(-2,0)]{1,0,0,0,0,0,0,0,0}{violet!50};
\tri[(-2,0)]{0,0,3,3,3,0,0,0,0}{teal!50};
\tri[(-2,0)]{0,0,0,0,0,0,7,7,7}{blue!50};

\node (eq) at (-2.2,-1.2) {\setlength{\arraycolsep}{1pt} \renewcommand{\arraystretch}{1.2}$\begin{array}{rcl} s&=&(\textcolor{violet}{1}, 0, \textcolor{teal}{3, 3, 3}, 0, \textcolor{blue}{7,7, 7}) \\ \overline{b}&=&(\textcolor{violet}{1}, \textcolor{teal}{3}, \textcolor{blue}{3})
\end{array}$};

\end{tikzpicture}
\hspace{.2 in}
\begin{tikzpicture}[scale=0.6]

\tri[(-2,0)]{0,2,0,0,0,0,0,0,0}{violet!50};
\tri[(-2,0)]{0,0,0,4,4,0,0,0,0}{teal!50};
\tri[(-2,0)]{0,0,0,0,0,0,7,7,7}{blue!50};

\node (eq) at (-2.2,-1.2) {\setlength{\arraycolsep}{1pt} \renewcommand{\arraystretch}{1.2}$\begin{array}{rcl} s&=&(0,\textcolor{violet}{2}, 0, \textcolor{teal}{4, 4}, 0, \textcolor{blue}{7, 7, 7}) \\ \overline{b}&=&(0,\textcolor{violet}{1}, \textcolor{teal}{2}, \textcolor{blue}{3})
\end{array}$};

\end{tikzpicture}

\caption{Two jump-float sequences from $\maxweightset{9}$ with their triangle representations and their extended block sequences.}
\label{fig:block}
\end{figure}

The extended block sequence of a jump-float sequence will follow the same sorts of rules as the block sequence in terms of its entries, and we highlight that the last entry of $\overline{b}$ is either $\row{n}$ or $\row{n}+1$ by Proposition~\ref{prop:large_block}. We are now ready to state the main result of this section, the construction of all extended block sequences (which also constructs the set of maximum weight jump-float sequences). 

\begin{theorem}
\label{thrm:structure}
For $n \in \mathbb{N}$, let $(\row{n},\ind{n})$ be its index in size trapezoid $S$ where $\row{n} \geq 1$ and $0 \leq \ind{n} \leq \row{n}+1$. The extended block sequence $\overline{b}  \in \extblockset{n}$ for $s \in \maxweightset{n}$ has one of the following forms:
\begin{enumerate}
\item[A.] The sequence $s$ has extended block sequence $(\overline{b}_1, \overline{b}_{2}, \ldots, \overline{b}_{\row{n}})$ where
\[\overline{b}_j = \left\{\begin{array}{c l} j & \text{for } j \not\in X
\\j+1 & \text{for } j \in X \end{array} \right. \quad \mbox{where} \quad X \in {[\row{n}] \choose \ind{n}}.\]

\item[B.] The sequence $s$ has extended block sequence $(\overline{b}_{1},\overline{b}_2, \ldots, \overline{b}_{\row{n}+1})$ where
\[\overline{b}_j = \left\{\begin{array}{c l} j & \text{for } j \in X
\\j-1 & \text{for } j \not\in X \end{array} \right. \quad \mbox{where} \quad X \in {[\row{n}+1] \choose \ind{n}-1}\]
\end{enumerate}
\end{theorem}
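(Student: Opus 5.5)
The plan is to prove Theorem~\ref{thrm:structure} by strong induction on $n$, reusing the decomposition from the proof of Theorem~\ref{thrm:correct}. That proof shows two things. First, every $s\in\maxweightset{n}$ with $\ind{n}\neq 0$ ends in a block of width $\row{n}$ or $\row{n}+1$ (Proposition~\ref{prop:large_block}). Second, removing that block together with its preceding gap leaves a sequence in $\maxweightset{n'}$ (with $n'=n-\row{n}-1$) or in $\maxweightset{n''}$ (with $n''=n-\row{n}-2$), respectively. Conversely, the weight computation there shows that appending a gap and a block of width $\row{n}$ to any element of $\maxweightset{n'}$, or of width $\row{n}+1$ to any element of $\maxweightset{n''}$, produces sequences of the common weight $\maxweight{n}$. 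So $\maxweightset{n}$ is exactly the set of these two kinds of extensions. Removing or appending a final block never touches $s_1$, so it preserves whether there is a leading gap. Hence the extended block sequence $\overline{b}$ of $s$ is simply the extended block sequence of the shorter sequence with the final width appended.

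The base cases are $1\le n\le 4$ (Figure~\ref{fig:main-base-case}) together with all perfect sizes $\ind{n}=0$.
\begin{itemize}
\item For $n\le 4$ I will check directly. For example, $n=3$ has $(\row{n},\ind{n})=(1,2)$, so Form A needs $X\in{[1]\choose 2}$, which is empty. Form B gives $X=\{1\}$, $\overline{b}=(1,1)$, and $X=\{2\}$, $\overline{b}=(0,2)$. These are $(1,0,3)$ and $(0,2,2)$, as required.
\item For $\ind{n}=0$, Form B is vacuous since ${[\row{n}+1]\choose -1}=\emptyset$. Form A with $X=\emptyset$ gives $\overline{b}=(1,2,\ldots,\row{n})$, the perfect sequence, which matches Theorem~\ref{thrm:unique}.
\end{itemize}

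For the inductive step with $k=\row{n}\ge 2$ and $\lambda=\ind{n}\ge 1$, I match the two forms against the two cases. Note $n'=\T_{k}-2+\lambda$ sits at position $(k-1,\lambda)$ and $n''$ at $(k-1,\lambda-1)$.
\begin{itemize}
\item Form A with $k\notin X$ has last entry $k$. Deleting it leaves exactly a Form A sequence at $(k-1,\lambda)$, with $X\subseteq[k-1]$ of size $\lambda$.
\item Form A with $k\in X$ has last entry $k+1$. Deleting it leaves Form A at $(k-1,\lambda-1)$ with $X\setminus\{k\}$.
\item Form B with $k+1\notin X$ has last entry $k$. Deleting it leaves Form B at $(k-1,\lambda)$ with $X\subseteq[k]$ of size $\lambda-1$.
\item Form B with $k+1\in X$ has last entry $k+1$. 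Deleting it leaves Form B at $(k-1,\lambda-1)$ with $X\setminus\{k+1\}$ of size $\lambda-2$.
\end{itemize}
Thus, by the induction hypothesis applied to $n'$ and $n''$, the set of extended block sequences described in Forms A and B at $(k,\lambda)$ is precisely the set obtained by appending $k$ to the elements of $\extblockset{n'}$ and $k+1$ to the elements of $\extblockset{n''}$. This is $\extblockset{n}$ by the first paragraph. The first entry $\overline{b}_1\in\{0,1,2\}$ is never the deleted entry when $k\ge2$, so leading gaps are handled automatically: $\overline{b}_1=0$ occurs only in Form B with $1\notin X$.

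The main obstacle I expect is the boundary $\lambda=k+1$ (and similarly $\lambda-1=k$ issues). Then $n'=\T_{k+1}-2$ is not at position $(k-1,k+1)$ of the trapezoid; it is the perfect size at $(k,0)$. The induction hypothesis is therefore stated for $(k,0)$, not for the coordinate pair that the matching above needs. I will handle this by observing that at $(k-1,k+1)$ the Form A description is empty, since ${[k-1]\choose k+1}=\emptyset$. The Form B description there is $X=[k]$, which yields $\overline{b}=(1,\ldots,k)$. This is exactly the Form A ($X=\emptyset$) description at $(k,0)$, i.e.\ the perfect sequence. So the two coordinate descriptions of that size agree, and the matching goes through. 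Once this identification is checked, disjointness of Forms A and B is immediate from their lengths ($\row{n}$ versus $\row{n}+1$). The resulting count ${\row{n}\choose\ind{n}}+{\row{n}+1\choose\ind{n}-1}$ then recovers Theorem~\ref{thrm:correct}.
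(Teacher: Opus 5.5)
Your proposal is correct and follows essentially the same route as the paper: strong induction, stripping the final block (of width $\row{n}$ or $\row{n}+1$ by Proposition~\ref{prop:large_block}), and matching the four Construction A/B subcases to the positions $(\row{n}-1,\ind{n})$ and $(\row{n}-1,\ind{n}-1)$. Your version is more careful than the paper's in four respects. You prove set equality rather than only one inclusion. You treat the perfect sizes $\ind{n}=0$ separately via Theorem~\ref{thrm:unique}. You reconcile the boundary $\ind{n}=\row{n}+1$, where $n-\row{n}-1$ is the perfect size at $(\row{n},0)$ rather than at $(\row{n}-1,\ind{n})$. And you correctly use $X'\cup\{\row{n}+1\}$ in the Form B subcase with final width $\row{n}+1$, where the paper writes $X'\cup\{\row{n}\}$.
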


We note that in the statement of Theorem~\ref{thrm:structure}, and subsequently in the inductive step of the proof of the theorem, the range of $\ind{n}$ will allow for $\binom{\row{n}}{\row{n}+1}$ and $\binom{\row{n+1}}{-1}$. This is in fact correct, and it implies that for certain values of $n$, all sequences are built only from Construction B or A, respectively. 

\begin{proof}[Proof of Theorem~\ref{thrm:structure}]
We will proceed by strong induction on $n$, the length of the jump-float sequence. Note that for $1 \leq n \leq 4$, the construction holds by direct computation, seen in Figure~\ref{fig:struc-base-case}.

\begin{figure}[ht!]
\centering
\begin{tikzpicture}[scale=0.8]

\begin{scope}[shift={(-3.5,0)}]
    \node at (0,1.8) {$n:$};
    \node at (0,0.5) {$\maxweightset{n}:$};
    \node at (0,-0.75) {Construction:};
    \node at (0, -1.5) {$X$:};
    \node at (0, -2.25) {$\overline{b}$ :};

%     \node at (0, -1.5) {
%         \begin{tabular}{r}
% Construction: \\
% Set: \\
% Extended Base Sequence:
%         \end{tabular}
%     };
\end{scope}

\begin{scope}[shift={(0,0)}]
    \node at (0,1.8) {$1$};
    \tri[(0.2, 0)]{1}{violet!50};
    \node at (0,-0.75) {A};
    \node at (0,-1.5) {$\emptyset$};
    \node at (0,-2.25) {$(\textcolor{violet}{1})$};
\end{scope}

\begin{scope}[shift={(3,0)}]
    \node at (0,1.8) {$2$};
    \tri[(-0.6, 0)]{1,1}{violet!50};   
    \tri[(0.7, 0)]{0,2}{teal!50}; 
    \node at (-0.78,-0.75) {A};
    \node at (-0.78,-1.5) {$\{1\}$};
    \node at (-0.78,-2.25) {$(\textcolor{violet}{2})$};
    \node at (0.6,-0.75) {B};
    \node at (0.6,-1.5) {$\emptyset$};
    \node at (0.6,-2.25) {$(0,\textcolor{teal}{1})$};    

\end{scope}

\begin{scope}[shift={(7,0)}]
    \node at (0,1.8) {$3$};
    \tri[(-.8, 0)]{1,0,0}{violet!50};
    \tri[(-.8, 0)]{0,0,3}{teal!50};
    \tri[(.9, 0)]{0,2,2}{teal!50};  
    \node at (-0.85,-0.75) {B};
    \node at (-0.85,-1.5) {$\{2\}$};
    \node at (-0.85,-2.25) {$(\textcolor{violet}{1},\textcolor{teal}{1})$};
    \node at (0.85,-0.75) {B};
    \node at (0.85,-1.5) {$\{1\}$};
    \node at (0.85,-2.25) {$(0,\textcolor{teal}{2})$};    
\end{scope}

\begin{scope}[shift={(11,0)}]
    \node at (0,1.8) {$4$};
    \tri[(0, 0)]{1,0,0,0}{violet!50};
    \tri[(0, 0)]{0,0,3,3}{teal!50};
    \node at (0,-0.75) {A};
    \node at (0,-1.5) {$\emptyset$};
    \node at (0,-2.25) {$(\textcolor{violet}{1},\textcolor{teal}{2})$};
\end{scope} 
\end{tikzpicture}

\caption{The base cases $1 \leq n \leq 4$ for Theorem~\ref{thrm:structure}. 
%The case for $n=4$ has been included as another example.
}
\label{fig:struc-base-case}
\end{figure}

Assume our construction holds for sequences of all lengths less than $n$ for some $n > 4$, and consider the case where the length is $n$. Let $s$ be a maximum weight jump-float sequence of length $n$, with corresponding extended block sequence $\overline{b}$. We construct $s'$ by removing the last block (and its preceding gap) of $s$. It has corresponding extended block sequence $\overline{b}'$. By Proposition~\ref{prop:large_block}, we know the last block of $s$ is either length $\row{n}$ or $\row{n}+1$. We will consider cases based on these lengths. Note, these two cases further highlight the Pascal-like recurrence in the sequence: Case 1 is the ``upper right'' neighbor, while Case 2 is the ``upper left'' neighbor, as seen by the calculation of the index in the row for each case.

\textbf{Case 1:} If the last block of $s$ is length $\row{n}$, then $s'$ is length $n':=n-(\row{n}+1)$. Straightforward algebra shows that $\row{n'}=\row{n}-1$ and $\ind{n'}=\ind{n}$. By strong induction, we know $s'$ either has the form given by Construction A or Construction B in the theorem statement. We consider both cases.
\begin{itemize}
    \item[A.] If $s'$ follows Construction A, then we know that every block in $\overline{b}'=(\overline{b}'_1,\ldots,\overline{b}'_{\row{n'}})$ is of the form
    \[\overline{b}'_j = \left\{\begin{array}{c l} j & \text{for } j \not\in X' \\j+1 & \text{for } j \in X' \end{array} \right.\] for some $X'\in {[\row{n'}] \choose \ind{n'}}$. 
    
    We now consider the form of $\overline{b}$. Since the last block is length $\row{n}$, we can write the extended block sequence for $s$ as $\overline{b}=(\overline{b}_1,\ldots \overline{b}_{\row{n}})$, where \[\overline{b}_j = \left\{\begin{array}{c l} j & \text{for } j \not\in X
    \\j+1 & \text{for } j \in X \end{array} \right.\]
    where $X = X'$. Note, this implies $s$ follows Construction $A$.
    
    \item[B.] If $s'$ follows Construction B, then we know that every block in $\overline{b}'=(\overline{b}'_1,\ldots,\overline{b}'_{\row{n'}+1})$ is of the form
    \[\overline{b}'_j = \left\{\begin{array}{c l} j & \text{for } j \in X'\\j-1 & \text{for } j \not\in X' \end{array} \right. \]
    for some $X' \in {[\row{n'}+1] \choose \ind{n'}-1}.$

    We now consider the form of $\overline{b}$. Since the last block is length $\row{n}$, we can write the extended block sequence of $s$ as $\overline{b}=(\overline{b}_1,\ldots,\overline{b}_{\row{n}+1})$, where
    \[\overline{b}_j = \left\{\begin{array}{c l} j & \text{for } j \in X
    \\j-1 & \text{for } j \not\in X \end{array} \right. \] where $X=X'$. Note, this implies $s$ follows Construction B.

\end{itemize}

\textbf{Case 2:} If the last block of $s$ is length $\row{n}+1$, then $s'$ is length $n'=n-(\row{n}+2)$. Straightforward algebra shows that $\row{n'}=\row{n}-1$ and $\ind{n'}=\ind{n}-1.$ By strong induction, we know $s'$ either has the form given by Construction A or Construction B in the theorem statement. We consider both cases.

\begin{itemize}
    \item[A.] If $s'$ follows Construction A, then we know that every block in $\overline{b}'=(\overline{b}'_1,\ldots,\overline{b}'_{\row{n'}})$ is of the form
    \[\overline{b}'_j = \left\{\begin{array}{c l} j & \text{for } j \not\in X' \\j+1 & \text{for } j \in X' \end{array} \right.\]
    for some $X'\in {[\row{n'}] \choose \ind{n'}}.$

    We now consider the form of $\overline{b}$. Since the last block is length $\row{n}+1$, we can write the extended block sequence for $s$ as $\overline{b}=(\overline{b}_1,\ldots \overline{b}_{\row{n}})$, where \[\overline{b}_j = \left\{\begin{array}{c l} j & \text{for } j \not\in X
    \\j+1 & \text{for } j \in X \end{array} \right.\]
    where $X = X' \cup \{\row{n}\}$. Note, this implies $s$ follows Construction $A$.
    
    \item[B.] If $s'$ follows Construction B, then we know that every block in $\overline{b}'=(\overline{b}'_1,\ldots,\overline{b}'_{\row{n'}+1})$ is of the form
    \[\overline{b}'_j = \left\{\begin{array}{c l} j & \text{for } j \in X'\\j-1 & \text{for } j \not\in X' \end{array} \right. \]
    for some $X' \in {[\row{n'}+1] \choose \ind{n'}-1}.$

    We now consider the form of $\overline{b}$. Since the last block is length $\row{n}+1$, we can write the extended block sequence of $s$ as $\overline{b}=(\overline{b}_1,\ldots,\overline{b}_{\row{n}+1})$, where
    \[\overline{b}_j = \left\{\begin{array}{c l} j & \text{for } j \in X
    \\j-1 & \text{for } j \not\in X \end{array} \right. \] where $X=X' \cup \{\row{n}\}$. Note, this implies $s$ follows Construction B.
\end{itemize}

Combining the results of these four cases, we have shown that $s$ is of the form given in either Construction A or Construction B. 

We also note that by this inductive construction, we are ensured that every set $X$ in both $\binom{[\row{n}]}{\ind{n}}$ and $\binom{[\row{n}+1]}{\ind{n}-1}$ is considered. Note that Figure~\ref{fig:struc-base-case} shows the sets for $1 \leq n \leq 4$, and construction independently considers both adding and not adding $\row{n}$ and $\row{n}+1$ to the sets.
\end{proof}

{\color{blue} }

To further illustrate the previous theorem, Figure~\ref{fig:construction_example1} returns to the example considered in Figure~\ref{fig:construction_example2} for $n=5$, highlighting the sets $X$ which build the extended block sequences. We note that $\row{n}=2$ and $\ind{n}=1$. 
\begin{figure}[h!]
    \centering
    \begin{tikzpicture}[scale=0.6]
    \node at (0,2.5) {\underline{Construction A:} $X\in \binom{[\row{n}]}{\ind{n}}=\binom{[2]}{1}$ };

    \tri[(-2,0)]{1,1,0,0,0}{violet!50};
    \tri[(-2,0)]{0,0,0,4,4}{teal!50};
    \node at (-2.2,-1.2) {\setlength{\arraycolsep}{1pt} \renewcommand{\arraystretch}{1.2} $\begin{array}{rcl} X&=& \{1\} \\ \overline{b} &=& (\textcolor{violet}{2},\textcolor{teal}{2}) \end{array}$};

    \tri[(2,0)]{1,0,0,0,0}{violet!50};
    \tri[(2,0)]{0,0,3,3,3}{teal!50};
    \node at (1.8,-1.2) {\setlength{\arraycolsep}{1pt} \renewcommand{\arraystretch}{1.2}$\begin{array}{rcl} X&=& \{2\} \\ \overline{b} &=& (\textcolor{violet}{1},\textcolor{teal}{3}) \end{array}$};
    
\end{tikzpicture}
\hspace{.6 in}
\begin{tikzpicture}[scale=0.6]
    \node at (0,2.5) {\underline{Construction B:} $X \in \binom{[\row{n}+1]}{\ind{n}-1}=\binom{[3]}{0}$};
    \tri[(0,0)]{0,2,0,0,0}{teal!50};
    \tri[(0,0)]{0,0,0,4,4}{blue!50};
    \node at (-.2,-1.2) {\setlength{\arraycolsep}{1pt} \renewcommand{\arraystretch}{1.2}$\begin{array}{rcl} X&=& \emptyset \\ \overline{b} &=& (0,\textcolor{teal}{1},\textcolor{blue}{2}) \end{array}$};
    
\end{tikzpicture}
    \caption{Constructing the three maximum weight jump-float sequences in $\maxweightset{5}$ with Theorem~\ref{thrm:structure}.}
    \label{fig:construction_example1}
\end{figure}

% We provide a secondary, more visual interpretation of this theorem. Consider the perfect sequences (and their triangle interpretations) that bound row $k$, namely sequences of lengths $n'=\T_{k+1}-2$ and $n''=\T_{k+2}-2$. By Theorem~\ref{thrm:unique}, we know the block sequences of these sequences are $b'=(1,2,\ldots k)$ and $b''=(1,2,\ldots,k+1)$. We can consider the constructions given in Theorem~\ref{thrm:structure} as modifying the blocks of the triangle interpretation, either adding columns to existing blocks of the perfect triangle of size $n$ or removing columns from existing blocks of the triangle of size $n'$. 

% Consider a length of sequences $j$ that is in row $k$, that is $\row{j}=k$. Construction $A$ adds $\ind{j}$ columns to the triangle of size $n'$, while Construction B removes $\row{j}+1-(\ind{j}-1)$ columns from the triangle of size $n''$. Straightforward algebra shows this yields a triangle (and therefore a jump-float sequence) of size $j$.

%% file: conclusion.tex
\section{Conclusion}

\label{sec:conclusion}

We have characterized the $\pattern$-avoiding permutations with maximum inversion number. We accomplished this by focusing on the Lehmer codes for these permutations, which capture the inversion structure. We provided an algebraic criterion for these Lehmer codes, and named this restricted set of codes as jump-float sequences. By further restricting our attention to the maximizing permutations, we narrowed our focus to jump-float sequences that could be represented using so-called block sequences. Our proof of Theorem \ref{thrm:correct} provides a recursive construction of the block sequences for the maximizing permutations. We also provide a direct construction of these block sequences in Theorem \ref{thrm:structure}. Of course, these block sequences can be used to create the permutations themselves.

In Corollary \ref{cor:correct-perm}, we observed that our main theorem immediately gives analogous results for $\underline{12}3$-avoiding permutations, $3\underline{21}$-avoiding permutations and $1\underline{23}$-avoiding permutations. As such, what do the Lehmer codes for these families look like?
We can immediately answer this question for $\underline{12}3$-avoiding permutations. This is because $\pi \in \permset{n}(\pattern)$ if and only if its complement $\pi^c \in \permset{n}(\underline{12}3)$. So there is an an elementary bijection from Lehmer codes for $\permset{n}(\pattern)$ to Lehmer codes for $\permset{n}(\underline{12}3)$, namely $(\ell_1, \ldots, \ell_n) \mapsto (\ell_1', \ldots, \ell_n')$ where $\ell_k' = n-k - \ell_k$. Therefore our explicit construction for $\permset{n}(\pattern)$ also leads to an explicit construction for $\permset{n}(\underline{12}3)$.

Meanwhile, we have $\pi \in \permset{n}(\pattern)$ if and only if $\pi^r \in \permset{n}(1\underline{23})$ and $\pi^{r.c} \in \permset{n}(3\underline{21})$. There is not an elementary algebraic map between the Lehmer codes for permutations related by reversal. So it is an open question to characterize the structure of the Lehmer codes for permutations that avoid these vincular patterns, and their extremal elements in the corresponding posets. 

Finally, Section \ref{sec:oeis_ver} contains an algebraic verification that $ \{ \maxinv \permset{n+1}(\pattern)  \}_{n \geq 1}$ matches OEIS sequence A023536 \cite{oeis}. We are curious if there is a combinatorial proof of this equivalence.